\numberwithin{equation}{section}
\newtheorem{theorem}{Theorem}[section]
\newtheorem{protocol}[theorem]{Protocol}
\newtheorem{lemma}[theorem]{Lemma}
\newtheorem{corollary}{Corollary}[theorem]
\theoremstyle{definition}
\newtheorem{definition}{Definition}[section]
\title{FTNILO: Explicit Multivariate Function Inversion, Optimization and Counting, Cryptography Weakness and Riemann Hypothesis Solution equation with Tensor Networks
}
\author{
  Alejandro Mata Ali 
\orcidlink{https://orcid.org/0009-0006-7289-8827}\\
  Instituto Tecnológico de Castilla y León \\
  Burgos, Spain\\
  \texttt{alejandro.mata@itcl.es} \\
  }
\begin{document}

\maketitle
\begin{abstract}
In this paper, we present a new formalism, the Field Tensor Network Integral Logical Operator \mbox{(FTNILO)}, to obtain the explicit equation that returns the minimum, maximum, and zeros of a multivariable injective function, and an algorithm for non-injective ones. This method extends the MeLoCoToN algorithm for inversion and optimization problems with continuous variables, by using Field Tensor Networks. The fundamentals of the method are the conversion of the problem of minimization of $N$ continuous variables into a problem of maximization of a dependent functional of a single variable. It can also be adapted to determine other properties, such as the zeros of any function. For this purpose, we use an extension of the imaginary time evolution, the new method of continuous signals, and partial or total integration, depending on the case. In addition, we show a direct way to recover both the tensor networks and the MeLoCoToN from this formalism. We show some examples of application, such as the Riemann hypothesis resolution. We provide an explicit integral equation that gives the solution of the Riemann hypothesis, being that if it results in a zero value, it is correct; otherwise, it is wrong. This algorithm requires no deep mathematical knowledge and is based on simple mathematical properties.
\end{abstract}

\keywords{Tensor Networks \and Function Inversion \and Function Optimization \and Riemann hypothesis \and Cryptography}

\tableofcontents

\newpage

\section{Introduction}
Function optimization is a field of great current interest, both academic and applied. Many physical problems can be modeled as obtaining the configuration that corresponds to the minimum of a function or functional, for example, energy~\cite{Ground_State} or action~\cite{Landau_1975pou}. Many applied problems can also be modeled in this way, such as portfolio optimization~\cite{Portfolio}, the optimization of production manufacturing processes~\cite{Production_Managment}, machine learning training~\cite{ML_optim}, or energy efficiency~\cite{Energy_efficiency}. This can be seen as a generalization of combinatorial optimization problems to continuous variables.

In the case of a function $f(x)$ with a single variable $x$, it is easy to obtain the minimum $X$, simply by applying
\begin{equation}
    \left.\frac{df(x)}{dx}\right|_{x=X} = 0, \quad \left.\frac{d^2f(x)}{dx^2}\right|_{x=X} > 0,
\end{equation}
and solving the resulting equations. Analogously to determine the maximum. The difficulty of computing the derivative of the function, the non-differentiable case, or solving the resulting equations from the differentiation can be hard problems in the search of the minimum from this expression. Also, we need to determine which of all the local minimum points is the global minimum, which can be difficult if there is an infinite number of them.

The case of functions with a larger number of variables $f(x_0,x_1,\dots,x_{N-1})$ is much more complex, since the gradient of the function must be calculated and evaluated at the points where its value equals zero
\begin{equation}
    \left.\nabla f(x_0,x_1,\dots,x_{N-1})\right|_{X_0,X_1,\dots,X_{N-1}} = \vec{0}.
\end{equation}
Those are the critical points. After that, we should compute the Hessian Matrix
\begin{equation}
    \mathcal{H}[f(x_0,x_1,\dots,x_{N-1})]_{ij} = \frac{\partial^2 f(x_0,x_1,\dots,x_{N-1})}{\partial x_i\partial x_j},
\end{equation}
with which to determine which of the points are minima knowing that they are those at which the Hessian is positive definite. In this case, we have the same problems as in the one-variable case but with many more computations. For this reason, there exists a wide literature of methods to solve optimization problems, such as stochastic gradient descent~\cite{Stochastic}, Quasi-Newton Methods~\cite{ConditioningOQ}, or derivative-free~\cite{Gradient_Free} and Hessian-free optimization~\cite{Hessian_Free}.

Another relevant problem is obtaining concrete values of functions. That is, the continuous generalization of combinatorial inversion problems. Given a known function $f(x)$, we want to know the value $X$ such that $f(X)=0$ is satisfied. In the multivariable case, we want to obtain the vector $\vec{X}$ such that $f(\vec{X})=0$. Obviously, this problem is related to the previous one. In this case, the difficulty lies directly in finding these points. There are several search methods, such as the bisection method~\cite{bisection}, the Newton-Raphson method~\cite{Newton-Raphson}, or the secant method~\cite{Secant}, but they have associated difficulties. A case of extreme interest in the search for zeros is the Riemannian Zeta function~\cite{Riemann_Funct}, defined as
\begin{equation}
    \zeta (s)=\sum_{n=1}^{\infty} \frac{1}{n^s},\ s\in \mathbb{C},
\end{equation}
for $Re(s)>1$, and extended for other values of $s$. This problem is related to the Riemann hypothesis~\cite{Riemann_Hypo}, which states that all non-trivial zeros of the Riemann Zeta function follow $Re(s)=1/2$. This is one of the unsolved Millennium Prize Problems~\cite{Millenium_Grand}. However, it has more applications, such as artificial vision~\cite{inversion_vision,perception_inverse}, simulation, prediction~\cite{Inverse_Ocean}, or resolution of other mathematical problems, and there are several methods to approach them~\cite{Inverse_modeling,Optical_inverse,Reservoir_inverse}.

Quantum computing has recently gained great popularity because of its ability to tackle various problems more efficiently than known classical algorithms. The best known example is Shor's algorithm~\cite{Shor}. However, given the limitation of current hardware, in the Noisy Intermediate-Scale Quantum (NISQ) era~\cite{NISQ}, many of these algorithms are not feasible. For example, Shor's algorithm requires 20 million qubits to break RSA-2048~\cite{shor20M}, something that today is impossible. This has led to a growing interest in an alternative field, that of quantum inspiration and, more particularly, the tensor networks. Tensor networks~\cite{TN_Orus} are graphical representations of certain tensor computations, which seek to exploit some quantum mathematical properties in classical devices. Among their use cases, the most famous is to efficiently simulate low-entanglement quantum systems~\cite{TN_Simul}. It has also been explored for machine learning models~\cite{TNML} and for combinatorial optimization~\cite{TN_Sime}. Within combinatorial optimization, we highlight the MeLoCoToN algorithm~\cite{Melocoton} that we take as a starting point for the new development, which allows us to obtain the explicit and exact formula that solves any combinatorial problem, both optimization and inversion.

In this context, we present the Field Tensor Network Integral Logical Operator (\mbox{FTNILO}). This is a tensor network formalism to obtain the explicit equation that returns the global minimum or global maximum of a multivariate function composed of other functions. The key of the algorithm is the expression of the function as a logical circuit of operator vectorial functions, and the integration over all of them. This follows and generalizes the MeLoCoToN algorithm, transforming the discrete signals into continuous signals and the summations into integrals. This formalism also allows one to create an explicit equation that returns other properties of the function, such as the location of specific values or the number of zeros in one input region. We use this formalism to recover the MeLoCoToN one as a particular case and to make some pure mathematics statements. We show its potential by obtaining the equation that gives the number of zeros of the Riemann Zeta function for the $Re(s)>0$ region. With this equation, we also obtain an equation whose value returns the demonstration (or not) of the Riemann hypothesis.

\newpage
\section{Field Tensor Network Integral Logical Operator (FTNILO)}
This formalism is based on the discrete case for combinatorial problems shown in~\cite{Melocoton}, where the MeLoCoToN method is defined. In order to focus this paper on the new method, we will not describe the MeLoCoToN and the concepts presented there, since it can be consulted openly. We will start by presenting the Field Tensor Networks (FTN). This type of generalized tensor network was first presented in the paper~\cite{Field_TN}. Since its notation and formalism are specialized in its concrete case of study, we are going to make modifications to better adapt it to our problems.

\begin{figure}[h]
    \centering
    \includegraphics[width=0.4\linewidth]{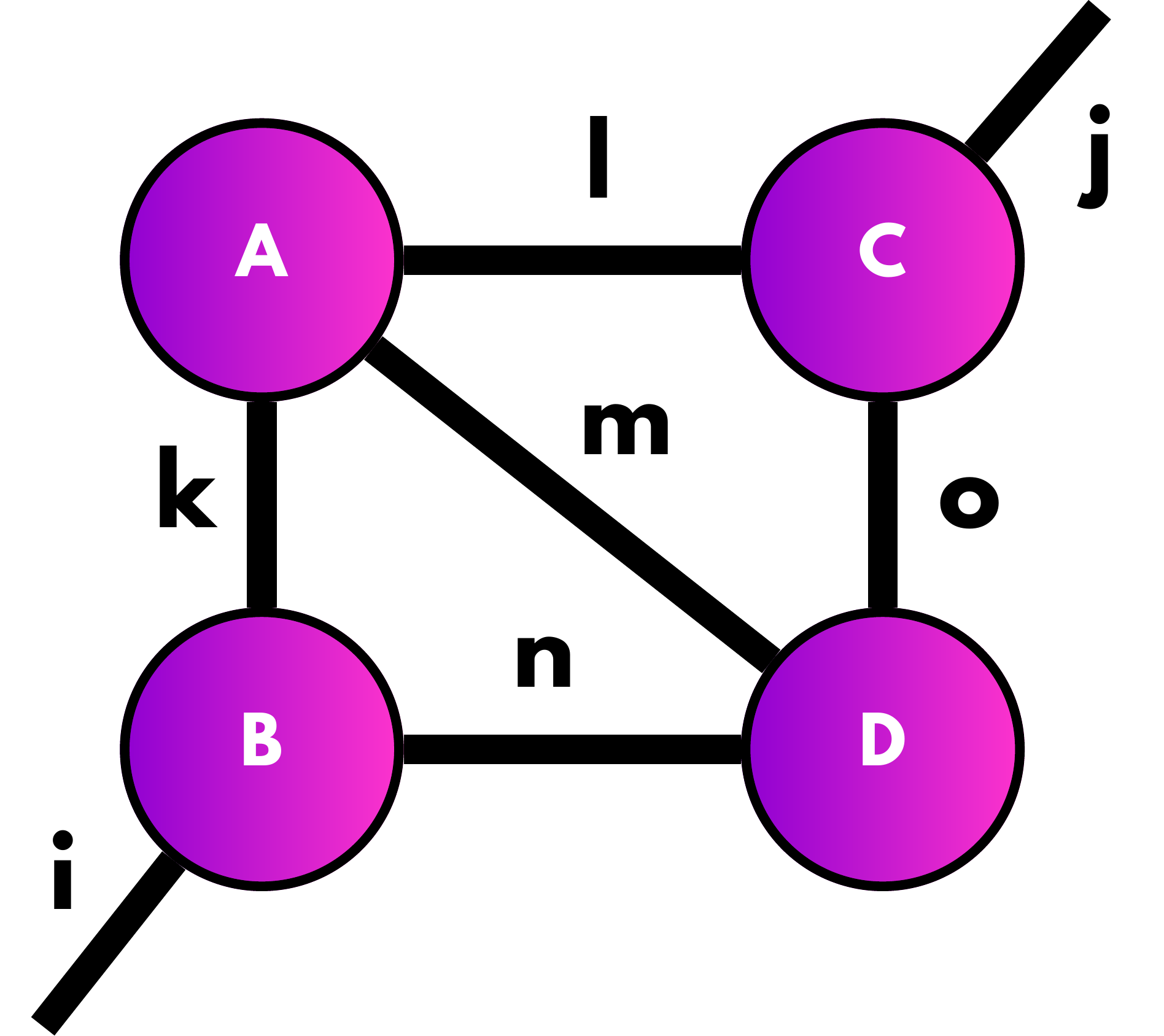}
    \caption{Tensor Network with four tensors.}
    \label{fig: TN basic}
\end{figure}

As we know, a tensor network represents a summation of products of tensor elements. For example, the tensor network shown in Fig.~\ref{fig: TN basic} represents the equation
\begin{equation}
    E_{ij} = \sum_{klmno} A_{klm} B_{ikn} C_{ilo}D_{nmo},
\end{equation}
being $i,j,k,l,m,n,o$ natural numbers and $A,B,C,D,E$ tensors. We say that $i,j,k,l,m,n,o$ are the indexes, $i,j$ are free and $k,l,m,n,o$ are bonded.

A field tensor network represents a multidimensional integral of products of functions. In this paper, we would always use generalized functions like Dirac deltas.
The FTN equation that would be associated with Fig.~\ref{fig: TN basic} representation is
\begin{equation}
    E(i,j) = \int A(k,l,m) B(i,k,n) C(i,l,o)D(n,m,o)dk dl dm dn do,
\end{equation}
being $i,j,k,l,m,n,o$ real numbers and $A,B,C,D,E$ multivariate functions, which we call \textit{tensor functions}. We say that $i,j,k,l,m,n,o$ are the \textit{continuous indexes}, $i,j$ are free and $k,l,m,n,o$ are bonded. The analogy between the two equations is clear. The only change is the substitution of each tensor of elements $T_{xyz}$ into a function $T(x,y,z)$ and the summation into an integral. The integration region is the equivalence of the dimension of the index, but in general we will define it considering that it can be all the real space. In the original paper, the operators are functionals and the indexes are functions, but in our approach it is more convenient to use generalized functions directly instead of functionals.

With this simple generalization, all the work developed in the MeLoCoToN formalism is easily generalizable to the field tensor networks formalism. We will gradually develop the new formalism for both multivariate function optimization and function inversion. We will understand every step with some examples. First, we will consider the case with only one solution, with no degeneration. After that, we will consider the degenerate case.

\subsection{Logical Circuits}
As in MeLoCoToN, we have to start by properly defining the problem variables and the classical logical circuit that does what we need. The choice of problem variables follows the same philosophy as in the discrete method: each operator must receive the minimum amount of information necessary to operate. With this, we can make the equations really express the internal relationships of the problem, and they are simpler to compute. In this case, we have the input variables of the target function, and the internal signal variables, which are the ones we can choose.

The logical circuit receives an input $\vec{x}$ and returns an output $\vec{y} = \gamma(\vec{x})$ (which can be the same), associating an internal number with the circuit given by the input, called the \textit{amplitude} in analogy to quantum systems. Since the circuit will be converted to a tensor network, each operator can only multiply the total amplitude of the circuit by one number.

\subsubsection{Inversion Problem}
In the case of inverting vector-valued functions, the logical circuit receives $\vec{x}$ and outputs $\vec{y} = f(\vec{x})$. Formally, $f:\mathcal{X}\rightarrow \mathcal{Y}$, with $\mathcal{X}\subseteq  \mathbb{R}^n$ and $\mathcal{Y}\subseteq \mathbb{R}^m$. This is performed using a series of \textit{transformation operators}. Each logical operator receives the corresponding part of the input and transforms it. Given the correlated transformations, it also communicates to the other operators signals with relevant information to perform their computations properly. It is the same as in the MeLoCoToN, but with continuous inputs and signals.

\paragraph{Computing the zeros from the sum of a sequence}

$ $

Given a set of real numbers $\{a_k\in \mathbb{R}, \forall k\in [0,n-1]\}$ and a value $Y\in\mathbb{R}$, we want the $\vec{X}\in \mathbb{R}^n$ which satisfies $Y=f(\vec{X})$, being the $f$ function defined as
\begin{equation}\label{eq: first inversion function}
    f(\vec{x})=\sum_{k=0}^{n-1} (a_k)^{x_k}.
\end{equation}
If $Y=0$, we are computing the zeros of the function.

We need a logical circuit that receives the values of each component of the $\vec{x}$ vector and returns the value of $f(\vec{x})$. The simplest way is to create a set of operators that compute each $(a_k)^{x_k}$ and sum them iteratively in a chain. The circuit is shown in Fig.~\ref{fig: Logical circuits} a. The $k$-th operator $S^k$ receives from its left input the value of $x_k$ and by its upper input the value of the sum up to the $k$-th step, $r_k=\sum_{j=0}^{k-1} (a_j)^{x_j}$. It returns by its lower output the sum received plus its own term $(a_k)^{x_k}$, returning $r_{k+1}=r_k+(a_k)^{x_k}=\sum_{j=0}^{k} (a_j)^{x_j}$. None of the operators modify the amplitude of the circuit.

\begin{figure}[h]
    \centering
    \includegraphics[width=\linewidth]{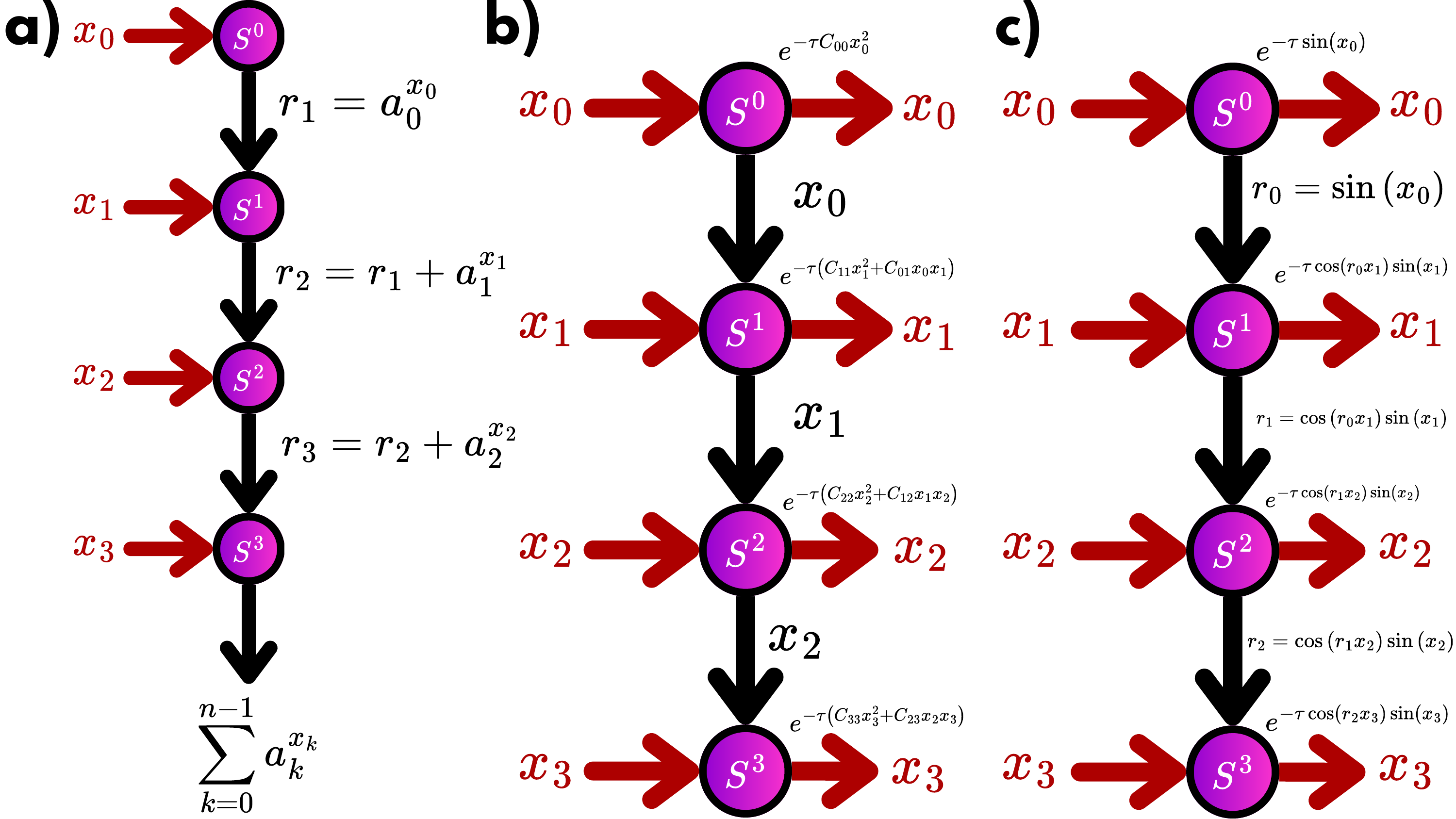}
    \caption{Logical circuits for a) Computing the zeros from the sum of a sequence, b) Optimizing a quadratic function with linear chain interaction to one neighbor, c) Optimizing the cos-sin function.}
    \label{fig: Logical circuits}
\end{figure}

\subsubsection{Optimization Problem}
In the case of optimizing a function $f:\mathcal{X}\rightarrow\mathcal{Y}$ with $\mathcal{X}\subseteq \mathbb{R}^n$ and $\mathcal{Y}\subseteq \mathbb{R}$, this is a logical circuit that receives as input the $\vec{x}$ value of the arguments of the function $f(\vec{x})$ and returns as output the same values, but by modifying the \textit{amplitude}. Its amplitude value is $e^{-\tau f(\vec{x})}$, for the \textit{imaginary time evolution}, being $\tau$ the imaginary time constant. Each logical operator has the ability to multiply the total amplitude of the circuit by a value, depending on the inputs it receives from other operators. In addition, each operator sends the other operators a signal, which carries information on how they should operate. This is exactly the same as in the MeLoCoToN case, but this time with continuous signals. In the case of functions that can be expressed as a sum of terms
\begin{equation}
    f(\vec{x})=\sum_i f_i (\rho_i(\vec{x})),
\end{equation}
being $\rho_i(\vec{x})$ the $i$-th subset of the input variables $\vec{x}$, the exponential follows
\begin{equation}
    e^{-\tau f(\vec{x})} = \prod_i e^{-\tau f_i (\rho_i(\vec{x}))},
\end{equation}
allowing its application with operators which receive less information of the input.

Additionally, if the optimal solution must follow a series of restrictions, preventing it from having a series of values, there will be other logical operators that, together with those of the optimizing circuit, will be in charge of multiplying by zero the amplitude of the inputs that do not satisfy the restrictions.

\paragraph{Optimizing a quadratic function with linear chain interaction to one neighbor}

$ $

Given a matrix $C$, we want to determine the minimum of the quadratic function
\begin{equation}
    f(\vec{x})=\sum_{i,j=0}^{n-1}C_{ij}x_ix_j.
\end{equation}
To simplify, we choose a version with linear chain interaction to one neighbor
\begin{equation}\label{eq: linear chain cost}
    f(\vec{x})=\sum_{i=0}^{n-1}(C_{ii}x_i^2+C_{i,i+1}x_ix_{i+1}).
\end{equation}
To obtain the minimum, we want the circuit to receive each component of $\vec{x}$ and change the amplitude of the circuit to $e^{-\tau f(\vec{x})}$. Considering that
\begin{equation}
    \begin{gathered}
    e^{-\tau f(\vec{x})} = e^{-\tau \sum_{i=0}^{n-1}(C_{ii}x_i^2+C_{i,i+1}x_ix_{i+1})}=\\
    = \prod_{i=0}^{n-1} e^{-\tau C_{ii}x_i^2}e^{-\tau C_{i,i+1}x_ix_{i+1}}.
    \end{gathered}
\end{equation}
In this case, the function is the linear combination of individual terms, so we can apply each term evolution using an operator. Moreover, each term depends only on the previous and current variables, so the logical circuit can be expressed as a chain. The logical circuit is shown in Fig.~\ref{fig: Logical circuits} b. The $i$-th operator $S^i$ receives by its left input the value of $x_i$ and by its upper input the value of the previous variable $x_{i-1}$. With this information, it can perform the multiplication of the amplitude by a factor $e^{-\tau (C_{ii}x_i^2+C_{i-1,i}x_{i-1}x_{i})}$. It returns by its lower and right outputs the value of its variable $x_i$.

\paragraph{Optimizing the cos-sin function}

$ $

In this case, we want to optimize the function
\begin{equation}
\begin{gathered}
    f(\vec{x})=\sum_{i=0}^{n-1} a_i(x_{i-1},x_i,a_{i-1}) \sin(x_i),\\
    a_0=1,\ a_i(x_{i-1},x_i,a_{i-1}) = \cos(a_{i-1} \sin(x_{i-1})x_i).
\end{gathered}
\end{equation}
This is a hard function to optimize. We follow the same structure as in the previous case, but we change the action and output of each operator. The logical circuit is shown in Fig.~\ref{fig: Logical circuits} c. Now, the $i$-th operator $S^i$ receives by its left input the value of $x_i$ and by its upper input the auxiliary value $r_{i-1}=a_{i-1} \sin(x_{i-1})$. With this information, it can perform the multiplication of the amplitude by a factor $e^{-\tau\cos(r_{i-1}x_i)\sin(x_i)}$. It returns by its lower output the auxiliary value $r_{i}=\cos(r_{i-1}x_{i}) \sin(x_{i})$ and by its right output the value of its variable $x_i$. With this construction, each operator performs its corresponding part of the evolution and sends to the next the minimal information needed.

\paragraph{Optimizing a quadratic function with linear chain interaction to one neighbor with the extra restriction $\sum_{i=0}^{n-1}a_ix_i<W$.}

$ $

In this case, we want to optimize the cost function in Eq.~\ref{eq: linear chain cost}, but with an additional restriction
\begin{equation}
    \sum_{i=0}^{n-1}a_ix_i<W.
\end{equation}

\begin{figure}
    \centering
    \includegraphics[width=0.7\linewidth]{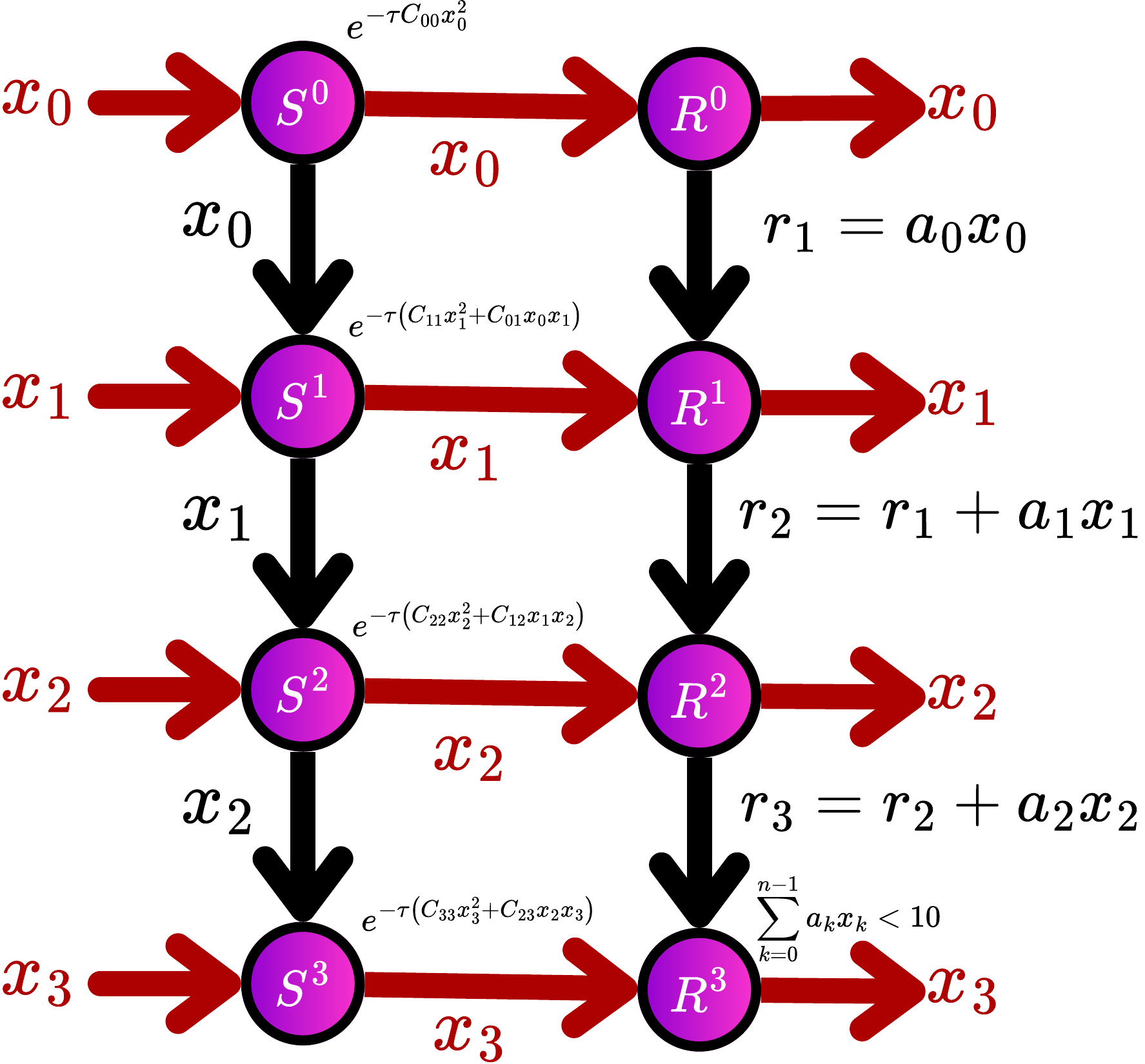}
    \caption{Logical Circuit for Optimizing a quadratic function with linear chain interaction to one neighbor with the restriction $\sum_k^{n-1}a_kx_k<10$.}
    \label{fig: Optim Restr}
\end{figure}

To approach this problem, we can reuse the logical circuit of the quadratic function case and concatenate it with other logical circuits to impose the restriction. The total circuit is shown in Fig.~\ref{fig: Optim Restr}. The second layer of the circuit is composed by $R$ operators, which evaluate the sum of the restriction and multiply the amplitude by zero if it does not satisfy it. The $i$-th operator $R^i$ receives as input the $i$-th variable value $x_i$ and the partial sum up to the $i$-th step, $r_i=\sum_{i=0}^{i-1}a_jx_j$. It returns its variable value $x_i$ to the right and the partial sum $r_{i+1}=r_i+a_ix_i = \sum_{i=0}^{i}a_jx_j$ to the bottom. Each operator multiplies the amplitude by one. The last $R$ operator verifies if the sum is lower than $W$. If it is, it multiplies the amplitude by one; if not, it multiplies the amplitude by zero. This zero multiplication can be interpreted as erasing the output due to the incompatible input.

\subsection{Circuit Field Tensorization}
After creating the logical circuit, we need to transform it into a field tensor network. This transformation allows us to process all possible inputs at the same time. The process is similar to the Circuit Tensorization in the MeLoCoToN. In general, each operator $A$ which receives the inputs $\vec{x}$, returns the outputs $\vec{y}=g(\vec{x})$ and multiplies the amplitude of the circuit by $h(\vec{x})$ is transformed to the generalized function
\begin{equation}\label{eq: function tensorized normalized}
A(\vec{x},\vec{\alpha})=
\begin{cases}
    h(\vec{x})& \text{ if }\vec{\alpha}=g(\vec{x}),\\
    0&\text{ otherwise }
\end{cases}
\end{equation}
With this transformation, the logical circuit is transformed directly into a field tensor network.

In the MeLoCoToN formalism, this operation is performed with \textit{Kronecker deltas}
\begin{equation}
    \delta_{ij\dots k} = 
    \begin{cases}
        1 &\text{ if } i=j=\dots=k,\\
        0 &\text{ otherwise } 
    \end{cases}
\end{equation}
being the resulting tensorization
\begin{equation}
    A_{\vec{x},\vec{\alpha}}=h(\vec{x})\prod_i\delta_{\alpha_i, (g(\vec{x}))_i}.
\end{equation}
In the FTNILO, these operators are performed with \textit{Dirac deltas}~\cite{Dirac}
\begin{equation}
\begin{gathered}
    \int_{\mathbb{R}^n} \delta^{(n)}(x_0-X_0,x_1-X_1,\dots,x_{n-1}-X_{n-1})f(x_0,x_1,\dots,x_{n-1}) dx_0dx_1\cdots dx_{n-1}=\\
    =f(X_0,X_1,\dots,X_{n-1}),
\end{gathered}
\end{equation}
which can be constructed as the product of individual deltas
\begin{equation}
    \delta^{(n)}(x_0-X_0,x_1-X_1,\dots,x_{n-1}-X_{n-1}) = \delta(x_0-X_0)\delta(x_1-X_1)\cdots\delta(x_{n-1}-X_{n-1}).
\end{equation}
To condense notation, we define
\begin{equation}
    \delta(\vec{x}-\vec{X}) \equiv \delta^{(n)}(x_0-X_0,x_1-X_1,\dots,x_{n-1}-X_{n-1}).
\end{equation}
In this way, the resulting tensor function is
\begin{equation}\label{eq: function tensorization deltas}
    A(\vec{x},\vec{\alpha})=h(\vec{x})\prod_i\delta\left(\alpha_i - (g(\vec{x}))_i\right)=h(\vec{x})\delta\left(\vec{\alpha} - g(\vec{x})\right).
\end{equation}
It is important to note that \eqref{eq: function tensorization deltas} is not equivalent to \eqref{eq: function tensorized normalized}, due to the divergence of the deltas at the desired point. However, this is exactly the main point, because this allows the general expression to retain the correct value of $h(\vec{x})$ after integrating. So, the correct expression is the delta one, while the cases one is only useful for understanding the function we want to implement. In the following, when we refer to the function defined in cases, we will neglect the infinity in the non-zero points.

It is clear that if we multiply the tensor functions associated with the logical operators of the circuit, the action of the Dirac deltas will allow us to retain only the values of the free variables that are consistent with all of the bond variables integrated. This is because if we had two deltas that have their non-zero points in different places, without overlapping, their product would always be zero. So, the expression of the general represented generalized function is zero at all the points which do not follow the logics we imposed.

\subsubsection{Inversion Problem}

In the inversion case, the function represented by the FTN is
\begin{equation}\label{eq: inversion function}
    \Phi(\vec{x},\vec{y})=
    \begin{cases}
        1&\text{ if } \vec{y}=f(\vec{x}),\\
        0&\text{ if } \vec{y}\neq f(\vec{x})
    \end{cases}
\end{equation}
or expressed with deltas
\begin{equation}\label{eq: inversion function with deltas}
    \Phi(\vec{x},\vec{y})=\delta(\vec{y}-f(\vec{x})).
\end{equation}
We call it the \textit{free inversion density}.

We need to impose the desired result $\vec{Y}$ of the function $f(\vec{X})$, so we connect to the general output continuous indexes a function which projects the solution into the value $\vec{Y}$. The function selected is the product of Dirac delta functions $\delta$
\begin{equation}
    \Delta(\vec{y},\vec{Y})=\prod_{i=0}^{m-1}\delta(y_i-Y_i),
\end{equation}
due to their property
\begin{equation}
    \int_{-\infty}^{\infty} g(y)\delta(y-Y) dy = g(Y).
\end{equation}
It is implemented by connecting each delta to its corresponding free output index. This gives us the result
\begin{equation}
    \mathcal{F}(\vec{x}) = \int_{\mathcal{Y}}\Phi(\vec{x},\vec{y}) \Delta(\vec{y},\vec{Y}) d\vec{y}
    = \int_{\mathcal{Y}}\delta(\vec{y}-f(\vec{x})) \prod_{i=0}^{m-1}\delta(y_i-Y_i) d\vec{y} 
    = \delta(\vec{Y}-f(\vec{x})).
\end{equation}
This is equivalent to the projection of the tensor into the correct subspace presented in the MeLoCoToN. If $\vec{Y}=f(\vec{X})$, the FTN represents the function
\begin{equation}\label{eq: final inversion tensor without deltas}
\mathcal{F}(\vec{x})=
\begin{cases}
    1&\text{ if } \vec{x}=\vec{X},\\
    0&\text{ if } \vec{x}\neq\vec{X}
\end{cases}
\end{equation}
or with the deltas
\begin{equation}\label{eq: final inversion tensor}
    \mathcal{F}(\vec{x})=\delta(\vec{x}-\vec{X}).
\end{equation}
We call it the \textit{restricted inversion density}.

The correctness of the approach is trivial. Notice that this function, as the free inversion density function, is the FTN we have constructed, so really we are not computing these integrals now. We have not computed explicit forms of these functions. We are obtaining the global FTN that represents these desired functions, and we will do the required computations at the end, taking advantage of commutations and properties of the integrals. The example for computing the zeros from the sum of a sequence case is shown in Fig.~\ref{fig: TLCs} a.

\subsubsection{Optimization Problem}

In the optimization case, the function represented by the FTN is
\begin{equation}
\Phi(\vec{x},\vec{y},\tau)=
\begin{cases}
        e^{-\tau f(\vec{x})}&\text{ if } \vec{y}=\vec{x} \text{ and } \vec{x}\in \mathcal{R},\\
        0 & \text{ otherwise }
\end{cases}
\end{equation}
being $\mathcal{R}$ the region of inputs that satisfies the restrictions. With deltas it is
\begin{equation}
    \Phi(\vec{x},\vec{y},\tau) = e^{-\tau f(\vec{x})} \delta(\vec{x}-\vec{y})\chi_{\mathcal{R}}(\vec{x}),
\end{equation}
being $\chi_{\mathcal{R}}(\vec{x})$ the \textit{indicator function} of the subset $\mathcal{R}$
\begin{equation}
    \chi_{\mathcal{R}}(\vec{x}) = \begin{cases}
    1 & \text{if } \vec{x}\in \mathcal{R},\\
    0  & \text{if } \vec{x}\notin \mathcal{R}.
    \end{cases}
\end{equation}
We call the $\Phi$ function the \textit{free optimization density}. Due to the fact that we will integrate over this variable, if the subset $\mathcal{R}$ is a discrete topological space, we will change the $\chi_{\mathcal{R}}(\vec{x})$ for $\sum_i \delta(\vec{x}-\vec{R}_i)$, being $\vec{R}_i$ the $i$-th element of the subset. This preserves the integral correct value in case that the correct intervals are infinitesimally small. It is important to note that this change is automatically done in the FTN of the circuit. We must change it in the computations we make to prove the demonstrations, not in the deduction of the real equations.

As we can observe, the free optimization density function is `diagonal', so we can remove the second set of arguments connecting each continuous index of output with a \textit{One constant function} $+(y)=1$. This results in
\begin{equation}
    \mathcal{F}(\vec{x},\tau) = \int_{\mathcal{X}}\Phi(\vec{x},\vec{y},\tau) d\vec{y}= \int_{\mathcal{X}} e^{-\tau f(\vec{x})} \delta(\vec{x}-\vec{y})\chi_{\mathcal{R}}(\vec{x}) d\vec{y} = e^{-\tau f(\vec{x})}\chi_{\mathcal{R}}(\vec{x}).
\end{equation}

In this way, the resulting function is
\begin{equation}
 \mathcal{F}(\vec{x},\tau)=
\begin{cases}
    e^{-\tau f(\vec{x})}&\text{ if } \vec{x}\in \mathcal{R},\\
    0& \text{ if } \vec{x}\notin \mathcal{R},
\end{cases}
\end{equation}
or in one line
\begin{equation}
     \mathcal{F}(\vec{x},\tau)=e^{-\tau f(\vec{x})}\chi_{\mathcal{R}}(\vec{x}).
\end{equation}
We call it the \textit{restricted optimization density}.

Fig.~\ref{fig: TLCs} b and c show the FTN of optimizing a quadratic function with linear chain interaction to one neighbor case and optimizing a quadratic function with linear chain interaction to one neighbor with the restriction $\sum_k^{n-1}a_kx_k<W$, respectively.
\begin{figure}
    \centering
    \includegraphics[width=0.7\linewidth]{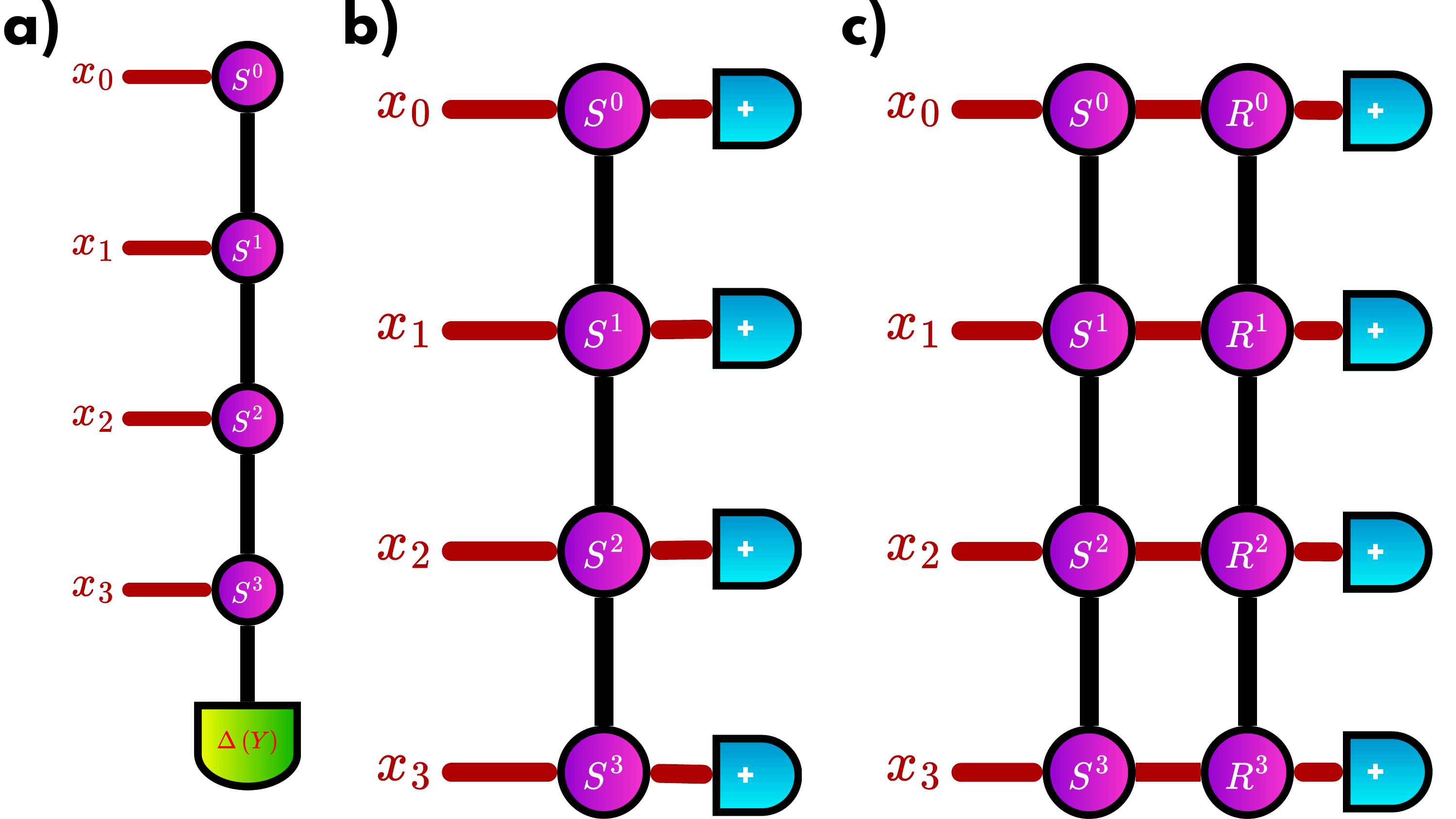}
    \caption{Tensorized circuits for a) Computing the zeros from the sum of a sequence, b) Optimizing a quadratic function with linear chain interaction to one neighbor, c) Optimizing a quadratic function with linear chain interaction to one neighbor with the restriction $\sum_k^{n-1}a_kx_k<10$.}
    \label{fig: TLCs}
\end{figure}

\subsection{Solution extraction}
Now that we have the tensor function that contains the important information of the problem, we need to extract the solution from it. In both cases, this process can be done iteratively, making use of tensor functions connected with the free indexes of the tensor function and updating the FTN for each step depending on the variable we want to determine.

\subsubsection{Inversion Problem}
In the inversion case, we can search for the solution in the large restricted inverse density function obtained $\mathcal{F}(\vec{x})$. However, this should be improved in order to obtain an explicit equation. To avoid brute-force search, in the determination of the $j$-th variable value, we can connect One Constant functions to each free index of the FTN but the $j$-th one, similar to the Half Partial Trace of the MeLoCoToN. This results in
\begin{equation}
    F_j(x_j)=\int_{\mathcal{X}_{-j}}\mathcal{F}(\vec{x}) \prod_{i=0\backslash i\neq j}^{n-1}dx_i =\int_{\mathcal{X}_{-j}} \prod_{i=0\backslash i\neq j}^{n-1}\delta(x_i-X_i)\prod_{i=1}^{n-1}dx_i =
    \delta(x_j-X_j),
\end{equation}
being $\mathcal{X}_{-j}$ the space of $\mathcal{X}$ neglecting the $j$-th dimension. We call this generalized function the \textit{j-th partial restricted inversion function}.

Because of this, we will only have to locate the position of the non-zero value of the delta. To do this, we can simply connect a node that implements the \textit{linear function} $L(x)=x$, as in Fig.~\ref{fig: Iteration inversion}, so that we have
\begin{equation}
    \Omega_j = \int_{\mathcal{X}_{j}} x_jF_j(x_j)dx_j = \int_{\mathcal{X}_{j}} x_j\delta(x_j-X_j)dx_j = X_j.
\end{equation}
We call this value the \textit{j-th partial restricted inversion value}.

\begin{figure}
    \centering
    \includegraphics[width=0.7\linewidth]{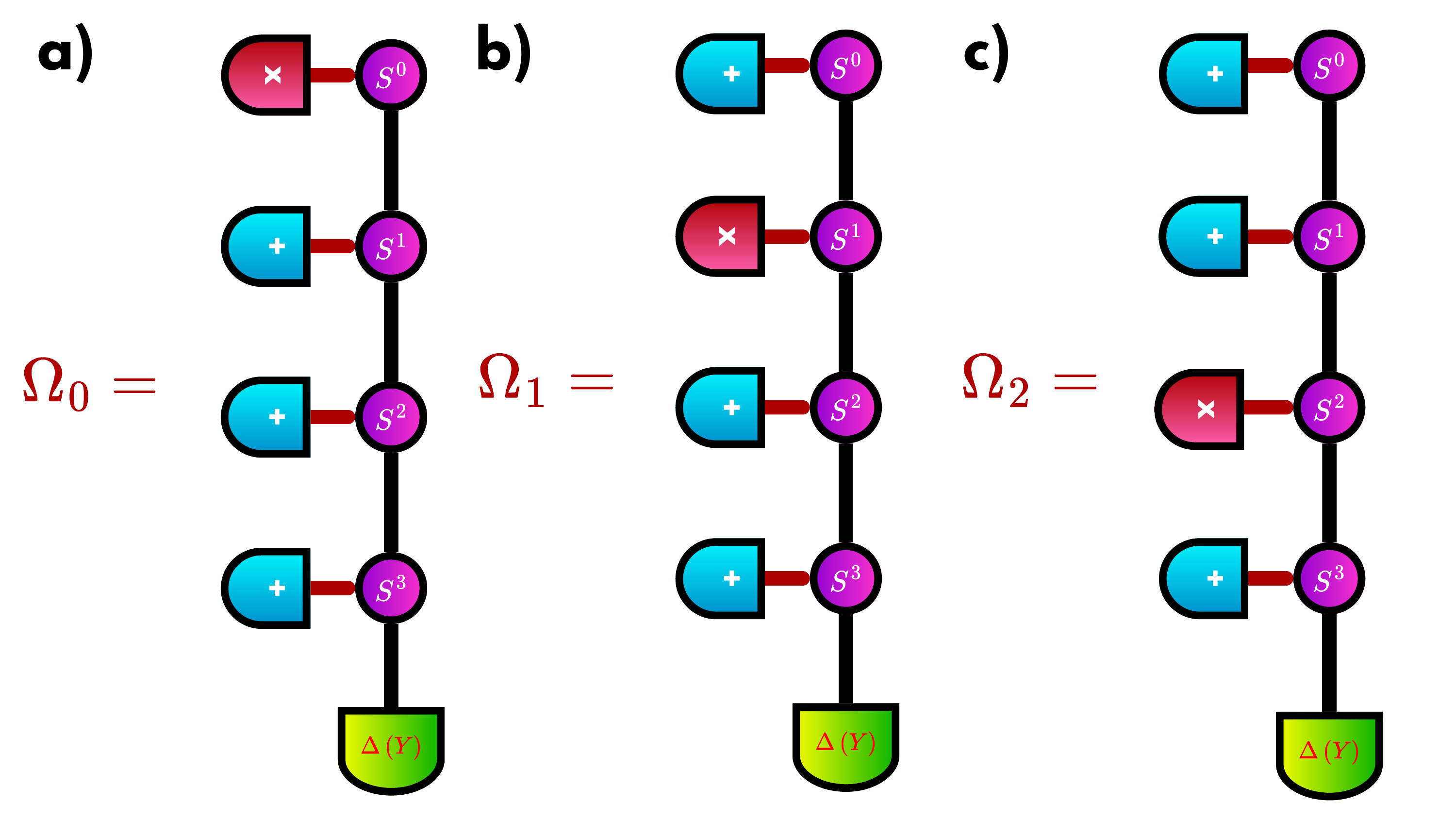}
    \caption{Iteration process to determine first, second and third variable values in the inversion problem for \eqref{eq: first inversion function}.}
    \label{fig: Iteration inversion}
\end{figure}
Finally, the value of the contracted field tensor network is the correct value of the $j$-th variable. We perform this evaluation for each variable. In terms of one equation, the resulting value of the field tensor network to determine the $j$-th variable is $\Omega_j$, which is not a function of the previous variables determined. We have defined that the correct value of that variable is the value of the FTN, so $X_j=\Omega_j$. Making a substitution, we can say
\begin{equation}\label{eq: Master inverse equation}
    \vec{X}=(\Omega_0, \Omega_1, \dots, \Omega_{n-1}).
\end{equation}
This is the \textit{restricted FTNILO inverse function}.

This provides an exact explicit functional (and its equation) to obtain the correct value $\vec{X}$ of every function $f$ satisfying $\vec{Y} = f(\vec{X})$ for every possible value of $\vec{Y}$. However, there are two limitations to this equation. The first is the degenerate case, where we have more than one possible $\vec{X}$ that satisfies the target value. In the next subsection, we will see how to deal with it. The second is the case where there is no value $\vec{X}$ that satisfies $\vec{Y} = f(\vec{X})$. In this case, the first tensor function in \eqref{eq: inversion function} is always zero, making all the following integrals equal to zero. This means that if the result provided by the FTNILO is $\vec{X} = \vec{0}$, we need to check if the result is really that or it means that there is no solution. The fastest way to determine the number of solutions is simply integrating \eqref{eq: final inversion tensor} over all its variables, because
\begin{equation}
    \mathcal{N}=\int_{\mathcal{X}}\mathcal{F}(\vec{x}) \prod_{i=0}^{n-1}dx_i =\int_{\mathcal{X}} \prod_{i=0}^{n-1}\delta(x_i-X_i)\prod_{i=0}^{n-1}dx_i = 1,
\end{equation}
if there is one solution, or otherwise it is
\begin{equation}
    \mathcal{N}=\int_{\mathcal{X}}\mathcal{F}(\vec{x}) \prod_{i=0}^{n-1}dx_i =\int_{\mathcal{X}} 0\prod_{i=0}^{n-1}dx_i  = 0.
\end{equation}
We call this number the \textit{FTNILO number}.

So, this is a previous step to apply before applying the general equation \eqref{eq: Master inverse equation}. With this result, if we can create the logical circuit of the function, and the problem has one solution, the method provides its solution equation. Moreover, if the function $f$ can be computed, it always has a logical circuit associated (for example, the computational circuit). So, every function can have its corresponding explicit equation. It is important to note that the equation is completely explicit because we have constructed it using only the $\vec{Y}$ and $f$ information, even in its computation, so the solution $\vec{X}$ does not appear in the equation.

Finally, we must remember that in tensor networks we can always insert pairs of matrix nodes $A$ and $A^{-1}$ into the edges that connect two tensors, returning the same values after contracting, but being a different tensor network. This means that we can always obtain a new equivalent tensor network from an existing one. Taking into account that there are infinite invertible matrices, there are infinite equivalent tensor networks. Extending this method to the FTN, we have that inserting tensor functions $A$ and $B$ that satisfy
\begin{equation}\label{eq: identity FTN}
    \int_{-\infty}^{\infty} A(x,y)B(y,z)dy = \delta(x-z),
\end{equation}
we can obtain a different and equivalent FTN. This implies that there exists an infinite number of equivalent FTNILO equations. One example of these equations is
\begin{equation}
    A(x,y) = \frac{1}{\sqrt{2\pi}} e^{ixy}, B(y,z) = \frac{1}{\sqrt{2\pi}} e^{-iyz},
\end{equation}
that integrating as in \eqref{eq: identity FTN} results in
\begin{equation}
    \int_{-\infty}^{\infty} \frac{1}{2\pi} e^{ixy} e^{-iyz} dz =
    \frac{1}{2\pi}\int_{-\infty}^{\infty}  e^{i(x-z)y} dz = \delta(x-z),
\end{equation}
that is the equivalent of applying the Fourier Transform and its inverse.

\begin{theorem}[General inversion equation for one solution]\label{th: general inversion}
$ $\\
    Given a vector-valued function $f:\mathcal{X}\rightarrow \mathcal{Y}$, with $\mathcal{X}\subseteq  \mathbb{R}^n$ and $\mathcal{Y}\subseteq \mathbb{R}^m$, and a set of values $\vec{Y}\in  \mathcal{Y}$, if $\exists! \vec{X}\in\mathcal{X} \backslash \ \vec{Y}=f(\vec{X})$, then there always exists an exact explicit function that returns the value $\vec{X}$. The function is
    \begin{equation}
    \vec{X}=(\Omega_0, \Omega_1, \dots, \Omega_{n-1}).
    \end{equation}
    being $\Omega_j$ the FTN associated to the $j$-th variable determination
    \begin{equation}
        \Omega_j = \int_{\mathcal{X}}\int_\mathcal{Y} x_j \Phi(\vec{x},\vec{y})\delta(\vec{y}-\vec{Y})d\vec{y}d\vec{x},
    \end{equation}
    and $\Phi(\vec{x},\vec{y})$ the FTN obtained from tensorizing the logical circuit of the function $f$.
\end{theorem}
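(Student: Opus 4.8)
The plan is to show that the stated integral collapses, component by component, to the coordinates of the unique preimage $\vec{X}$, so that the claimed identity $\vec{X}=(\Omega_0,\dots,\Omega_{n-1})$ holds. The entire argument rests on applying the sifting property of the Dirac delta twice and invoking the uniqueness hypothesis to pin down the support of the restricted inversion density. No new machinery is needed beyond the tensorization construction and the logical normalization convention already fixed in the text.

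First I would recall that tensorizing the logical circuit of $f$ produces the free inversion density $\Phi(\vec{x},\vec{y})=\delta(\vec{y}-f(\vec{x}))$, as in \eqref{eq: inversion function with deltas}; this is the only place where the concrete structure of the circuit enters, and it is justified by the fact that the product of the per-operator Dirac deltas forces $\vec{y}$ to equal the circuit output $f(\vec{x})$. Next I would evaluate the inner integral over $\vec{y}$: integrating $\delta(\vec{y}-f(\vec{x}))\,\delta(\vec{y}-\vec{Y})$ against $d\vec{y}$ and using the sifting property to set $\vec{y}=\vec{Y}$ yields $\delta(\vec{Y}-f(\vec{x}))$, which is exactly the restricted inversion density $\mathcal{F}(\vec{x})$. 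Here the uniqueness hypothesis $\exists!\,\vec{X}$ enters: the support of $\delta(\vec{Y}-f(\vec{x}))$ as a distribution in $\vec{x}$ is the solution set of $\vec{Y}=f(\vec{x})$, which is the single point $\vec{X}$, so under the logical normalization this equals $\delta(\vec{x}-\vec{X})$ as in \eqref{eq: final inversion tensor}. Finally I would carry out the remaining integral $\Omega_j=\int_{\mathcal{X}}x_j\,\delta(\vec{x}-\vec{X})\,d\vec{x}$; factoring the delta as $\prod_i\delta(x_i-X_i)$ and applying the sifting property to each coordinate gives $\Omega_j=X_j$. Assembling over $j$ then delivers the vector identity.

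For the existence clause I would note that the formula is well defined for every $f$ admitting a logical circuit — in particular every computable $f$, through its computational circuit — and that $\Omega_j$ is manifestly explicit: it is built only from $\vec{Y}$ and the tensorization $\Phi$ of $f$, and never refers to the unknown $\vec{X}$, so it is a genuine closed-form functional of the data rather than an implicit characterization.

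The main obstacle, and the step I would treat most carefully, is the identification $\delta(\vec{Y}-f(\vec{x}))=\delta(\vec{x}-\vec{X})$. Taken literally as distributions, composing a Dirac delta with $f$ at a simple zero produces a Jacobian weight, $\delta(\vec{Y}-f(\vec{x}))=\lvert\det J_f(\vec{X})\rvert^{-1}\,\delta(\vec{x}-\vec{X})$ with $J_f$ the Jacobian of $f$, so a naive reading would leave a spurious factor $\lvert\det J_f(\vec{X})\rvert^{-1}$ multiplying $X_j$. The resolution is the logical normalization convention fixed after \eqref{eq: function tensorization deltas}: the tensorized deltas are read as unit-weight logical indicators at their collapse point, with the divergent normalization neglected, so that the restricted inversion density carries weight one at $\vec{x}=\vec{X}$ by construction. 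I would therefore state this convention explicitly as the hypothesis under which the Jacobian factor is absorbed, and check that it is consistent with the two-stage extraction (One Constant functions on the $i\neq j$ indices followed by the linear node $L(x)=x$) used to define $\Omega_j$ in the body of the text.
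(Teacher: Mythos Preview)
Your proposal is correct and follows essentially the same route as the paper: the derivation preceding the theorem statement already carries out the two sifting steps (over $\vec{y}$, then over $\vec{x}$) and the identification $\mathcal{F}(\vec{x})=\delta(\vec{x}-\vec{X})$ under the uniqueness hypothesis, exactly as you outline. Your explicit flagging of the Jacobian factor and its absorption into the logical normalization convention is more careful than the paper's own treatment, which simply asserts \eqref{eq: final inversion tensor} without comment, but this is a refinement of the same argument rather than a different one.
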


\begin{theorem}[Checker of solutions]\label{th: checker}
$ $\\
    Given a vector-valued function $f:\mathcal{X}\rightarrow \mathcal{Y}$, with $\mathcal{X}\subseteq  \mathbb{R}^n$ and $\mathcal{Y}\subseteq \mathbb{R}^m$, and a set of values $\vec{Y}\in  \mathcal{Y}$, the value
    \begin{equation}
        \mathcal{N}=\int_{\mathcal{X}}\int_\mathcal{Y}\Phi(\vec{x},\vec{y})\delta(\vec{y}-\vec{Y}) d\vec{y}d\vec{x}
    \end{equation}
    will be higher than zero only if there exists some $\vec{X}\in\mathcal{X}\ \backslash \ \vec{Y}=f(\vec{X})$, and zero only if there do not exist, being $\Phi(\vec{x},\vec{y})$ the FTN obtained from tensorizing the logical circuit of the function $f$.
\end{theorem}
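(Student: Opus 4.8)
The plan is to insert the explicit form of the free inversion density, $\Phi(\vec{x},\vec{y})=\delta(\vec{y}-f(\vec{x}))$ from \eqref{eq: inversion function with deltas}, and collapse the double integral to a single one whose sign can be read off directly. First I would perform the inner integration over $\vec{y}$ with the sifting property of the Dirac delta, exactly as in the derivation of $\mathcal{F}(\vec{x})$, obtaining
\begin{equation}
    \mathcal{N}=\int_{\mathcal{X}}\int_{\mathcal{Y}}\delta(\vec{y}-f(\vec{x}))\,\delta(\vec{y}-\vec{Y})\,d\vec{y}\,d\vec{x}=\int_{\mathcal{X}}\delta\bigl(\vec{Y}-f(\vec{x})\bigr)\,d\vec{x}.
\end{equation}
This is precisely the FTNILO number $\int_{\mathcal{X}}\mathcal{F}(\vec{x})\,d\vec{x}$ already introduced, so the whole statement reduces to determining the sign of this single integral of a composed delta. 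Since the Dirac delta is a non-negative generalized function (a positive measure), $\mathcal{N}$ is non-negative by construction, and it remains only to decide when it is strictly positive.

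The second step is a case distinction on the preimage $f^{-1}(\vec{Y})\cap\mathcal{X}$. If it is empty, then $\vec{Y}-f(\vec{x})$ never vanishes on $\mathcal{X}$: the composed delta is the zero distribution there, its integrand is identically zero, and hence $\mathcal{N}=0$. This establishes the contrapositive of the first claim, that $\mathcal{N}>0$ can only hold when a solution exists. For the converse I would assume at least one solution and invoke the composition rule for the Dirac delta. In the generic regular case ($n=m$, isolated roots with $\det J_f(\vec{X}_i)\neq 0$, where $J_f$ denotes the Jacobian matrix of $f$) this yields
\begin{equation}
    \mathcal{N}=\sum_{\vec{X}_i:\,f(\vec{X}_i)=\vec{Y}}\frac{1}{\lvert\det J_f(\vec{X}_i)\rvert},
\end{equation}
a sum of strictly positive weights, so $\mathcal{N}>0$. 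Together with non-negativity, the two cases give the advertised biconditional: $\mathcal{N}=0$ exactly when no preimage exists and $\mathcal{N}>0$ exactly when one does.

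The hard part will be securing strict positivity in the degenerate situations the statement does not exclude: a singular Jacobian at a root, the rectangular case $n\neq m$, or a positive-dimensional solution set, where the naive composition formula breaks down and $\mathcal{N}$ may diverge. The uniform way to handle these is to replace $\delta$ by a nascent delta $\delta_\epsilon\geq 0$, say a normalized Gaussian, and to study $\mathcal{N}_\epsilon=\int_{\mathcal{X}}\delta_\epsilon(\vec{Y}-f(\vec{x}))\,d\vec{x}$, which is a genuine non-negative Lebesgue integral with $\mathcal{N}=\lim_{\epsilon\to 0}\mathcal{N}_\epsilon$. Because $\delta_\epsilon$ is strictly positive everywhere, any actual solution contributes a positive neighborhood to $\mathcal{N}_\epsilon$, forcing $\mathcal{N}>0$ (possibly $+\infty$) whenever the preimage is non-empty, while an empty and closed-away preimage makes $\mathcal{N}_\epsilon\to 0$. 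I would therefore base the existence direction on this approximate-identity limit and keep the explicit Jacobian sum only as the computationally convenient special case, rather than as the logical core of the argument.
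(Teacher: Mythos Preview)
Your proposal is correct and follows the same reduction the paper uses: insert $\Phi(\vec{x},\vec{y})=\delta(\vec{y}-f(\vec{x}))$, apply the sifting property in $\vec{y}$ to obtain $\mathcal{N}=\int_{\mathcal{X}}\delta(\vec{Y}-f(\vec{x}))\,d\vec{x}=\int_{\mathcal{X}}\mathcal{F}(\vec{x})\,d\vec{x}$, and then read off the two cases. The paper's justification stops there: it simply identifies $\mathcal{F}(\vec{x})$ with $\delta(\vec{x}-\vec{X})$ when a (single) solution exists and with the zero function otherwise, and integrates to get $1$ or $0$. You go further by invoking the composition rule with the Jacobian determinant for the regular case and by passing to a nascent-delta regularization $\delta_\epsilon$ to secure strict positivity in the singular, rectangular, and positive-dimensional situations. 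That extra layer is not in the paper, which glosses over the Jacobian prefactor entirely (note that the paper's claim $\mathcal{N}=1$ for a single solution tacitly assumes $\lvert\det J_f\rvert=1$, or rather treats $\delta(\vec{Y}-f(\vec{x}))$ as literally $\delta(\vec{x}-\vec{X})$). So your argument is not just equivalent to the paper's but strictly more careful; the trade-off is that the paper's version is a two-line heuristic while yours actually engages with the analytic subtleties the statement hides.
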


\begin{theorem}[Infinite number of general inversion equation for one solution]\label{th: infinite inversion}
$ $\\
    Given a vector-valued function $f:\mathcal{X}\rightarrow \mathcal{Y}$, with $\mathcal{X}\subseteq  \mathbb{R}^n$ and $\mathcal{Y}\subseteq \mathbb{R}^m$, and a set of values $\vec{Y}\in \mathcal{Y}$, if $\exists! \vec{X}\in\mathbb{R}^n\ \backslash \ \vec{Y}=f(\vec{X})$, then there always exists an infinite number of exact, explicit functions that return the same value $\vec{X}$.
\end{theorem}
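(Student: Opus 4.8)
The plan is to exploit the gauge freedom of field tensor networks already noted around \eqref{eq: identity FTN}: since every FTNILO inversion equation is obtained by contracting a fixed collection of tensor functions over their bond continuous indexes, I can splice a resolution-of-identity pair onto any bond without changing the contracted value, and then exhibit a continuum of distinct such pairs. Concretely, I start from the equation of Theorem~\ref{th: general inversion}, where each component is
\begin{equation}
\Omega_j = \int_{\mathcal{X}}\int_{\mathcal{Y}} x_j\,\Phi(\vec{x},\vec{y})\,\delta(\vec{y}-\vec{Y})\,d\vec{y}\,d\vec{x},
\end{equation}
and $\Phi$ is itself a multidimensional integral of the tensor functions obtained from the logical circuit. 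The hypothesis $\exists!\,\vec{X}$ guarantees, via Theorem~\ref{th: general inversion}, that this equation returns the unique solution $\vec{X}$; the task is therefore to produce infinitely many \emph{different} integral expressions with the same output.

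First I would isolate one internal edge of the network carrying some bond index $u$, shared by two tensor functions $T_1(\dots,u)$ and $T_2(u,\dots)$, so that the contraction contains the factor $\int T_1(\dots,u)\,T_2(u,\dots)\,du$. Such an edge always exists, since any circuit has at least the output edges glued to the projecting deltas $\delta(y_i-Y_i)$. On this edge I splice in a pair $A,B$ satisfying \eqref{eq: identity FTN}, replacing the factor by
\begin{equation}
\iiint T_1(\dots,u)\,A(u,v)\,B(v,w)\,T_2(w,\dots)\,du\,dv\,dw.
\end{equation}
Integrating $v$ first and using $\int A(u,v)B(v,w)\,dv=\delta(u-w)$ collapses this back, by the sifting property of the delta, to $\int T_1(\dots,u)\,T_2(u,\dots)\,du$. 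Hence the new FTN represents the same restricted inversion density, giving the same $\Omega_j$ and therefore the same $\vec{X}$, while being a manifestly different network and a different explicit equation.

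It then remains to exhibit infinitely many admissible pairs. I would display a one-parameter family, for instance the scaling deltas $A_\lambda(x,y)=\delta(y-\lambda x)$ and $B_\lambda(y,z)=|\lambda|\,\delta(\lambda z-y)$ for $\lambda\in\mathbb{R}\setminus\{0\}$, for which $\int A_\lambda(x,y)B_\lambda(y,z)\,dy=|\lambda|\,\delta(\lambda(z-x))=\delta(x-z)$ by $\delta(\lambda t)=|\lambda|^{-1}\delta(t)$; or the rescaled Fourier kernels $A_a(x,y)=\sqrt{a/2\pi}\,e^{iaxy}$, $B_a(y,z)=\sqrt{a/2\pi}\,e^{-iayz}$ for $a>0$, generalizing the example that precedes the theorem. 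Each parameter value yields a distinct integrand, hence a distinct explicit function for $\vec{X}$, and the parameter ranges over an uncountable set, so the family of equivalent FTNILO inverse equations is infinite. Splicing several such pairs on different edges, or iterating on a single edge, enlarges the family further.

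The main obstacle I anticipate is not the algebra but the rigor of the distributional manipulations: interchanging the order of integration to evaluate the inserted $v$-integral first, and forming products of generalized functions, are only formally valid in the Dirac-delta calculus the paper adopts. A fully rigorous argument would phrase each insertion as an equality of tempered distributions and, where the kernels are oscillatory (the Fourier family) or have coincident support, justify it through a nascent-delta or Gaussian-regularization limit together with continuity of the Fourier transform on Schwartz space. Within the formal conventions established earlier in the paper these steps are immediate, so I expect the regularization bookkeeping to be the only delicate point.
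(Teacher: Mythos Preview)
Your proposal is correct and follows essentially the same route as the paper: the theorem is stated immediately after the discussion surrounding \eqref{eq: identity FTN}, and that discussion \emph{is} the paper's argument---insert a pair $A,B$ with $\int A(x,y)B(y,z)\,dy=\delta(x-z)$ on any bond to obtain an equivalent but distinct FTN, with the Fourier pair given as the explicit example. Your version is in fact more careful than the paper's, since you exhibit concrete one-parameter families and flag the distributional-rigor caveats that the paper leaves implicit.
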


\subsubsection{Optimization Problem}
In the optimization case, to obtain the solution, we have the option of looking directly at the restricted optimization density function obtained from the FTN. However, this would be exactly like solving the problem by brute force, since we would be looking along the entire exponential of the function. To avoid this, we take inspiration from the \textit{Half Partial Trace} method of the MeLoCoToN, and add a One Constant function node to all variables except the one we want to determine. In this way, we will integrate all the variables except the one that we are interested in. Since for a sufficiently high value of $\tau$ there will be a relatively high peak of amplitude only for the optimal value $\vec{X}$ of $\vec{x}\in \mathcal{R}$. This means that, in the limit, the function becomes a Dirac delta centered on the value $\vec{X}$.
\begin{equation}
    \lim_{\tau\rightarrow\infty} \frac{\mathcal{F}(\vec{x},\tau)}{\int_{\mathcal{X}}\mathcal{F}(\vec{x},\tau)d\vec{x}}=
    \lim_{\tau\rightarrow\infty} \frac{e^{-\tau f(\vec{x})}\chi_{\mathcal{R}}(\vec{x})}{\int_{\mathcal{X}}e^{-\tau f(\vec{x})}\chi_{\mathcal{R}}(\vec{x})d\vec{x}}= \delta(\vec{x}-\vec{X}) =  \prod_{i=0}^{n-1}\delta(x_i-X_i).
\end{equation}

Therefore, if we integrate over all variables except $x_j$, we obtain 
\begin{equation}
    \lim_{\tau\rightarrow\infty} F_j(x_j,\tau)=\lim_{\tau\rightarrow\infty} \frac{\int_{\mathcal{X}_{-j}}\mathcal{F}(\vec{x},\tau)\prod_{i=0\backslash i\neq j}^{n-1}dx_i}{\int_{\mathcal{X}}\mathcal{F}(\vec{x},\tau)d\vec{x}} =
    \int_{\mathcal{X}_{-j}} \delta(\vec{x}-\vec{X})\prod_{i=0\backslash i\neq j}^{n-1}dx_i =
    \delta(x_j-X_j).
\end{equation}
We call it the \textit{j-th partial restricted optimization function}.

From this point on, the process is the same as in the inversion case. In this case, each $\Omega_j$ also depends on $\tau$ before the limit, so the equation is the \textit{restricted FTNILO optimization function}
\begin{equation}\label{eq: simple equation optimization with only tau}
    \vec{X}=\lim_{\tau\rightarrow\infty}(\Omega_0(\tau), \Omega_1(\tau), \dots, \Omega_{n-1}(\tau)),
\end{equation}
being the \textit{j-th partial restricted optimization value}
\begin{equation}
    \Omega_j(\tau) = \frac{\int_{\mathcal{X}}\int_\mathcal{X} x_j \Phi(\vec{x},\vec{y},\tau)d\vec{y}d\vec{x}}{\int_{\mathcal{X}}\int_\mathcal{X} \Phi(\vec{x},\vec{y},\tau)d\vec{y}d\vec{x}}.
\end{equation}

Eq.~\ref{eq: simple equation optimization with only tau} is the exact and explicit functional (and its equation) that returns the optimal value of each variable for a function minimization. If we want to maximize, we only need to change the sign of $\tau$. However, the main problem of the formulation is the degenerate case, which we will see in the next subsection.

\begin{theorem}[General optimization equation with no degeneration]\label{th: general optimization}
$ $\\
    Given a real function $f:\mathcal{X}\rightarrow \mathcal{Y}$, with $\mathcal{X}\subseteq  \mathbb{R}^n$ and $\mathcal{Y}\subseteq \mathbb{R}$, with one and only one value $\vec{X}= \arg\min_{\vec{x}} f(\vec{x})$, then there always exists an exact explicit function that returns the value $\vec{X}$. The equation is
    \begin{equation}
    \vec{X}=\lim_{\tau\rightarrow\infty}(\Omega_0(\tau), \Omega_1(\tau), \dots, \Omega_{n-1}(\tau)).
    \end{equation}
    being $\Omega_j$ the FTN associated to the $j$-th variable determination
    \begin{equation}
        \Omega_j(\tau) = \frac{\int_{\mathcal{X}}\int_\mathcal{X} x_j \Phi(\vec{x},\vec{y},\tau)d\vec{y}d\vec{x}}{\int_{\mathcal{X}}\int_\mathcal{X} \Phi(\vec{x},\vec{y},\tau)d\vec{y}d\vec{x}}.
    \end{equation}
\end{theorem}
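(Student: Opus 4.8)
The plan is to recognize $\Omega_j(\tau)$ as the mean of the coordinate $x_j$ under a Gibbs probability measure and then prove concentration of that measure on the unique minimizer via a Laplace-type argument. First I would carry out the inner $\vec{y}$-integration exactly as in the derivation of the restricted optimization density: substituting $\Phi(\vec{x},\vec{y},\tau)=e^{-\tau f(\vec{x})}\delta(\vec{x}-\vec{y})\chi_{\mathcal{R}}(\vec{x})$ collapses the $\vec{y}$ integral through the Dirac delta and reduces $\Omega_j(\tau)$ to the single ratio
\begin{equation}
\Omega_j(\tau) = \frac{\int_{\mathcal{R}} x_j\, e^{-\tau f(\vec{x})}\, d\vec{x}}{\int_{\mathcal{R}} e^{-\tau f(\vec{x})}\, d\vec{x}}.
\end{equation}
This is exactly the expectation $\mathbb{E}_{p_\tau}[x_j]$ of the $j$-th coordinate under the probability density $p_\tau(\vec{x}) = e^{-\tau f(\vec{x})}\chi_{\mathcal{R}}(\vec{x})/Z(\tau)$ with partition function $Z(\tau)=\int_{\mathcal{R}} e^{-\tau f}$, which is well defined and strictly positive whenever the normalizing integral converges.

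Next I would establish concentration. Write $m = f(\vec{X})$. Using that $\vec{X}$ is the unique global minimizer, for every $\epsilon>0$ there is a separation gap $\eta(\epsilon)>0$ with $f(\vec{x}) \geq m + \eta$ for all $\vec{x}\in\mathcal{R}$ outside the ball $B_\epsilon(\vec{X})$; conversely, continuity of $f$ near $\vec{X}$ gives a lower bound $Z(\tau) \geq c\, e^{-\tau(m+\xi)}$ for any $\xi>0$ and $\tau$ large. Combining these two estimates bounds the mass that $p_\tau$ places outside the ball by a quantity of order $e^{-\tau(\eta-\xi)}$, which tends to $0$ as $\tau\to\infty$ once $\xi<\eta$. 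This makes precise the statement in the text that the normalized density tends weakly to $\delta(\vec{x}-\vec{X})$.

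To finish I would split the expectation as $\Omega_j(\tau) = \int_{B_\epsilon} x_j\, dp_\tau + \int_{\mathcal{R}\setminus B_\epsilon} x_j\, dp_\tau$. On $B_\epsilon$ the integrand satisfies $|x_j - X_j|<\epsilon$, so the first piece lies within $\epsilon$ of $X_j\, p_\tau(B_\epsilon)$, and since $p_\tau(B_\epsilon)\to 1$ this contributes $X_j$ in the limit up to order $\epsilon$; the second piece vanishes by the concentration bound. Letting $\epsilon\to 0$ yields $\lim_{\tau\to\infty}\Omega_j(\tau)=X_j$ for each $j$, which is the asserted equation, and its explicitness is immediate because the ratio involves only $f$ and $\tau$ and never the unknown $\vec{X}$.

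The main obstacle will be controlling the far-field part of the numerator when $\mathcal{X}$ is unbounded: for the second integral to vanish I need $x_j\, e^{-\tau f(\vec{x})}$ to be integrable and its tail to be dominated by the peak at $\vec{X}$, which requires a coercivity hypothesis on $f$ (growth to $+\infty$ at the boundary of $\mathcal{X}$ faster than $|x_j|$ grows) that the statement leaves implicit. If $\mathcal{R}$ is compact --- and in particular in the discrete case where $\chi_{\mathcal{R}}$ is replaced by $\sum_i\delta(\vec{x}-\vec{R}_i)$ and all integrals become finite sums --- this difficulty disappears and the argument is elementary; otherwise I would state the coercivity assumption explicitly and invoke dominated convergence to justify the tail estimate.
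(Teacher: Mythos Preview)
Your approach is essentially the same as the paper's: reduce the ratio to the expectation of $x_j$ under the Gibbs density $p_\tau\propto e^{-\tau f}\chi_{\mathcal{R}}$ and argue that this density concentrates on the unique minimizer $\vec{X}$ as $\tau\to\infty$, so that the expectation converges to $X_j$. The paper's derivation preceding the theorem simply asserts the limit $\lim_{\tau\to\infty} \mathcal{F}(\vec{x},\tau)/\int\mathcal{F}=\delta(\vec{x}-\vec{X})$ without further justification; your Laplace-type splitting into $B_\epsilon(\vec{X})$ and its complement, together with the explicit identification of the coercivity hypothesis needed when $\mathcal{X}$ is unbounded, supplies the rigor that the paper leaves implicit.
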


Obviously, out of the infinite limit, this equation is not exact, but we can approximate it as much as we want by increasing the value of $\tau$. If we do not take the limit (for example, in practical implementations), we must project the previously determined variables into the chosen values. So, these indexes are connected with $\delta(x_i-\hat{X}_i)$ instead of $+(x_i)$, $\hat{X}_i$ being the determined value for the $i$-th variable. This makes the iteration process as shown in Fig.~\ref{fig: Iteration optimization}. We can also use the \textit{Motion Onion} techniques to improve the result without reaching the limit.

\begin{figure}[h]
    \centering
    \includegraphics[width=0.7\linewidth]{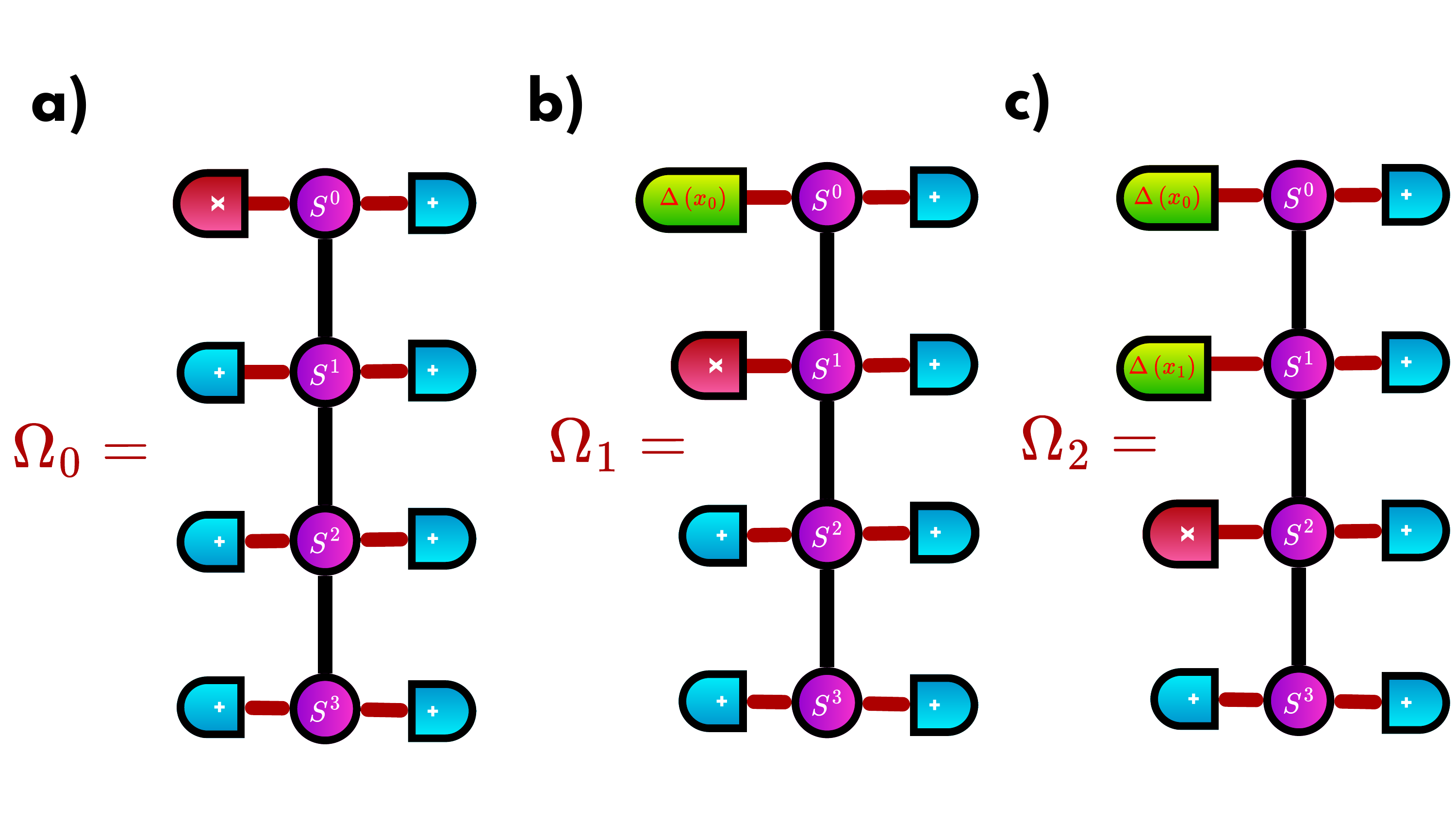}
    \caption{Iteration process to determine first and second variable values in the optimization problem.}
    \label{fig: Iteration optimization}
\end{figure}

In terms of one equation, the resulting value of the field tensor network to determine the $j$-th variable is $\lim_{\tau\rightarrow \infty}\Omega_j(\tau,X_0,X_1,\dots,X_{j-1})$, which is a function of the previous variables determined. We also define that the FTN of the first variable is $\lim_{\tau\rightarrow \infty}\Omega_0(\tau)$, depending on no other variable. We have defined that the correct value of that variable is the value of the FTN, so $X_j=\lim_{\tau\rightarrow \infty}\Omega_j(\tau,X_0,X_1,\dots,X_{j-1})$. Taking a substitution, we can say that

\begin{equation}\label{eq: Master optimization equation}
    X_j=\lim_{\tau\rightarrow \infty}\Omega_j(\tau,\Omega_0(\tau),\Omega_1(\tau,\Omega_0(\tau)),\Omega_2(\tau,\Omega_0(\tau),\Omega_1(\tau,\Omega_0(\tau))),\dots,\Omega_{j-1}(\tau,\dots)).
\end{equation}

\begin{theorem}[General optimization equation with no degeneration out of the limit]\label{th: general optimization limit}
$ $\\
    Given a real function $f:\mathcal{X}\rightarrow \mathcal{Y}$, with $\mathcal{X}\subseteq  \mathbb{R}^n$ and $\mathcal{Y}\subseteq \mathbb{R}$, with one and only one value $\vec{X}= \arg\min_{\vec{x}} f(\vec{x})$, then there always exists an exact explicit function that returns the value $\vec{X}$. The equation is
    \begin{equation}
    X_j=\lim_{\tau\rightarrow \infty}\Omega_j(\tau,\Omega_0(\tau),\Omega_1(\tau,\Omega_0(\tau)),\Omega_2(\tau,\Omega_0(\tau),\Omega_1(\tau,\Omega_0(\tau))),\dots,\Omega_{j-1}(\tau,\dots)).
    \end{equation}
    being $X_j$ the $j$-th element of $\vec{X}$ and $\Omega_j$ the FTN associated with the determination of the $j$-th variable. This function can be arbitrary approximated by the function
    \begin{equation}
        \hat{X}_j=\Omega_j(\tau,\Omega_0(\tau),\Omega_1(\tau,\Omega_0(\tau)),\Omega_2(\tau,\Omega_0(\tau),\Omega_1(\tau,\Omega_0(\tau))),\dots,\Omega_{j-1}(\tau,\dots)),
    \end{equation}
    by increasing the value of the parameter $\tau$.
\end{theorem}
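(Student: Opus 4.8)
The plan is to reduce the statement to a Laplace-type concentration estimate for the underlying Gibbs measure and then to control how that estimate propagates through the nested iteration. Write $\mu_\tau^{z}$ for the probability measure obtained by fixing the first $j$ coordinates at $z=(z_0,\dots,z_{j-1})$ and normalizing the conditional weight, i.e. the measure with density proportional to $e^{-\tau f(z_0,\dots,z_{j-1},x_j,\dots,x_{n-1})}\chi_{\mathcal{R}}$; by construction $\Omega_j(\tau,z)=\mathbb{E}_{\mu_\tau^{z}}[x_j]$, the numerator and denominator in the theorem being exactly the projected partition integrals. The first step is to note that at the exact prefix $z=X_{<j}:=(X_0,\dots,X_{j-1})$ the conditional objective $(x_j,\dots,x_{n-1})\mapsto f(X_0,\dots,X_{j-1},x_j,\dots,x_{n-1})$ still has $(X_j,\dots,X_{n-1})$ as its unique minimizer, which is immediate since $\vec{X}$ is the global minimizer of $f$. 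A standard Laplace/saddle-point argument then gives that $\mu_\tau^{X_{<j}}$ converges weakly to $\delta_{(X_j,\dots,X_{n-1})}$, whence $\Omega_j(\tau,X_{<j})\to X_j$; the case $j=0$ requires no projection and is precisely the unconditional concentration already granted by Theorem~\ref{th: general optimization}.

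The genuinely delicate point is that in the iterated equation each $\Omega_j$ is evaluated not at the exact prefix but at the finite-$\tau$ estimates $\hat X_0(\tau),\hat X_1(\tau),\dots$, which only tend to the exact values as $\tau\to\infty$. To close the induction I would prove a \emph{uniform} version of the concentration: on a fixed neighborhood of $X_{<j}$ the conditional minimizer depends continuously on $z$ and equals $(X_j,\dots,X_{n-1})$ at $z=X_{<j}$ (here a local nondegeneracy such as a positive-definite Hessian at $\vec{X}$ is what guarantees uniqueness and continuity of the conditional argmin for perturbed prefixes), and the weak convergence $\mu_\tau^{z}\to\delta$ holds uniformly for $z$ in that neighborhood. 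Granting this, and assuming inductively that $\hat X_i(\tau)\to X_i$ for all $i<j$, the composition $\Omega_j(\tau,\hat X_0(\tau),\dots,\hat X_{j-1}(\tau))$ is squeezed to $X_j$: given $\epsilon>0$ one fixes a neighborhood on which the conditional minimizer stays within $\epsilon/2$ of $X_j$, then chooses $\tau_0$ large enough that the prefix estimates land in that neighborhood and the conditional expectation sits within $\epsilon/2$ of the conditional minimizer. This joint control of $\tau\to\infty$ and $z\to X_{<j}$, so that the errors of the earlier variables do not compound, is the main obstacle, and it is exactly where the regularity hypotheses enter decisively: continuity of $f$, integrability of $e^{-\tau f}\chi_{\mathcal R}$ for large $\tau$ (ensured either by a bounded $\mathcal{R}$ or by coercivity of $f$), and a strict isolated global minimum so that $f$ is bounded away from its minimum outside every neighborhood.

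With the nested convergence $\lim_{\tau\to\infty}\Omega_j(\tau,\Omega_0(\tau),\dots,\Omega_{j-1}(\tau,\dots))=X_j$ established, both assertions of the theorem follow at once. The exact equation is precisely the statement just proved, and the approximation claim is then merely the meaning of the limit: since the finite-$\tau$ quantity $\hat X_j$ converges to $X_j$, for every tolerance $\epsilon>0$ there is a $\tau_0$ with $|\hat X_j-X_j|<\epsilon$ for all $\tau\ge\tau_0$, so increasing $\tau$ drives $\hat X_j$ arbitrarily close to $X_j$. I would finally emphasize that no machinery beyond Theorem~\ref{th: general optimization} is needed for the limit itself; the whole novelty lives in upgrading the Laplace concentration to a uniform estimate robust to substitution of approximate prefixes, so the cleanest write-up isolates that single uniform lemma and derives the rest by induction on $j$.
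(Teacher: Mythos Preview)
Your proposal is considerably more careful than what the paper offers. The paper does not give a formal proof of this theorem: the argument consists entirely of the paragraphs preceding the statement, where it is observed that projecting the first $j$ coordinates to their exact values and applying the delta-concentration of Theorem~\ref{th: general optimization} yields $X_j=\lim_{\tau\to\infty}\Omega_j(\tau,X_0,\dots,X_{j-1})$, and then it is asserted that ``taking a substitution'' produces the nested formula with a single outer limit. That is the whole derivation; the approximation clause is treated as self-evident.

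The conceptual skeleton you use---Laplace/Gibbs concentration followed by iterative substitution---is therefore the same as the paper's, but you isolate a point the paper glosses over entirely: passing from $X_j=\lim_\tau\Omega_j(\tau,X_{<j})$ to $X_j=\lim_\tau\Omega_j(\tau,\hat X_{<j}(\tau))$ is not a formal substitution, since the inner arguments themselves depend on $\tau$ and only reach $X_{<j}$ in the limit. Your proposed remedy (a Laplace estimate uniform over a neighbourhood of $X_{<j}$, together with continuity of the conditional minimizer under prefix perturbation, closed by induction on $j$) is the right kind of tool for this. Two caveats are worth recording. First, the regularity you invoke---positive-definite Hessian at $\vec X$, continuity of $f$, coercivity or boundedness of $\mathcal R$---is not part of the theorem as stated, so you are in effect proving a version with stronger hypotheses; without something of that sort the conditional argmin need not be unique or stable under perturbation of the prefix, so these assumptions are load-bearing in your scheme rather than cosmetic. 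Second, the paper simply does not engage with this compositional issue at all, so your write-up is not an alternative route so much as a completion of the sketch the paper leaves implicit.
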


\begin{theorem}[Infinite number of general optimization equations with no degeneration]\label{th: infinite optimization}
$ $\\
    Given a real function $f:\mathcal{X}\rightarrow \mathcal{Y}$, with $\mathcal{X}\subseteq  \mathbb{R}^n$ and $\mathcal{Y}\subseteq \mathbb{R}$, with one and only one value $\vec{X}= \arg\min_{\vec{x}} f(\vec{x})$, there always exists an infinite number of exact explicit equations that return the same value $\vec{X}$.
\end{theorem}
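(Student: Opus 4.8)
The plan is to reduce this statement to the identity-insertion mechanism already established for the inversion case in Theorem~\ref{th: infinite inversion}, since the optimization equation is assembled in exactly the same way from a contracted FTN. Recall that the $j$-th partial restricted optimization value is
\begin{equation}
    \Omega_j(\tau) = \frac{\int_{\mathcal{X}}\int_\mathcal{X} x_j \Phi(\vec{x},\vec{y},\tau)d\vec{y}d\vec{x}}{\int_{\mathcal{X}}\int_\mathcal{X} \Phi(\vec{x},\vec{y},\tau)d\vec{y}d\vec{x}},
\end{equation}
and that $X_j = \lim_{\tau\rightarrow\infty}\Omega_j(\tau,\dots)$. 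The core observation is that $\Phi$ is a product of tensor functions contracted over their shared (bonded) continuous indexes, and the freedom recorded in \eqref{eq: identity FTN} lets us insert, on any such bond, a pair of tensor functions $A,B$ with $\int A(x,y)B(y,z)\,dy = \delta(x-z)$ without altering the contracted value.

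First I would fix the optimizer FTN $\Phi(\vec{x},\vec{y},\tau)$ and select any internal bond edge; for the chain circuits considered here such a bond always exists, since the operators communicate auxiliary signal indexes. Next I would introduce a one-parameter family of admissible pairs, for instance the scaled Fourier kernels $A_\lambda(x,y)=\tfrac{1}{\sqrt{2\pi}}e^{i\lambda xy}$ and $B_\lambda(y,z)=\tfrac{1}{\sqrt{2\pi}}e^{-i\lambda yz}$, each satisfying the resolution of identity by the same computation as in \eqref{eq: identity FTN}. Inserting this pair on the chosen bond produces a new FTN $\Phi_\lambda$, with a genuinely different integrand (an extra continuous index and two extra oscillatory factors), whose contraction nonetheless equals $\Phi$ for every $\vec{x}$ and every $\tau$.

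Then I would conclude by noting that, because $\Phi_\lambda$ and $\Phi$ have identical contracted values, substituting $\Phi_\lambda$ into both the numerator and the denominator of $\Omega_j(\tau)$ leaves the ratio, and hence the limit $X_j$, unchanged. Since distinct values of $\lambda$ give distinct integrands, we obtain a continuum of pairwise different explicit equations, all returning the same $\vec{X}$; selecting any countable subfamily already yields the claimed infinite number. The identical argument transplants to the out-of-limit equation of Theorem~\ref{th: general optimization limit}.

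The hard part will be the distributional bookkeeping rather than the combinatorial idea. I expect the main obstacle to be confirming that the oscillatory insertions keep every integral well defined and that the delta identity holds in the required generalized-function sense once it sits alongside the $\tau$-dependent exponential; one must also argue genuine distinctness, namely that different $\lambda$ do not collapse to the same written equation after a trivial relabeling. Both points are settled exactly as in the inversion case, so the proof is essentially a transfer of Theorem~\ref{th: infinite inversion} with $\Phi(\vec{x},\vec{y})$ replaced by $\Phi(\vec{x},\vec{y},\tau)$ and the ratio structure carried through.
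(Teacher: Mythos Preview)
Your proposal is correct and follows the same approach the paper intends: the paper does not supply a separate formal proof for this theorem but relies on the identity-insertion discussion around \eqref{eq: identity FTN} that precedes Theorem~\ref{th: infinite inversion}, and your argument is precisely the transfer of that mechanism to the optimization FTN $\Phi(\vec{x},\vec{y},\tau)$ with the ratio and limit carried along. Your additional remarks on distributional well-definedness and genuine distinctness of the resulting equations go beyond what the paper actually verifies, but they do not deviate from its line of argument.
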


\subsection{Degenerate case}
We have determined the general equations for non-degenerate problems, so we now need to determine them for degenerate problems. In this case, we only need to return to the $\mathcal{F}(\vec{x})$ expressions and take into account that there are multiple points where the function is not zero. This modification forces us to have to sample with the FTNILO number the different regions of the space until we find one with a single solution.

\subsubsection{Inversion Problem}

In the inversion case, the free inversion density function in \eqref{eq: inversion function} and \eqref{eq: inversion function with deltas} is valid, because it takes into account the $\mathcal{N}$ correct values of $\vec{X}$. The following computations are valid up to the expressions \eqref{eq: final inversion tensor without deltas} and \eqref{eq: final inversion tensor}. They are the result of imposing $\vec{Y}=f(\vec{X})$. This means, if we have a set of values $\{\vec{X}_{(k)}, k\in[0,\mathcal{N}-1]\}$ that satisfy $\vec{Y}=f\left(\vec{X}_{(k)}\right)$, the correct expression after the projection is
\begin{equation}\label{eq: final inversion tensor without deltas degenerate}
\mathcal{F}(\vec{x})=
\begin{cases}
    1&\text{ if } \vec{x}\in\left\{\vec{X}_{(k)}, k\in[0,\mathcal{N}-1]\ \backslash \ \vec{Y}=f\left(\vec{X}_{(k)}\right)\right\},\\
    0&\text{ if } \vec{x}\notin\left\{\vec{X}_{(k)}, k\in[0,\mathcal{N}-1]\ \backslash \ \vec{Y}=f\left(\vec{X}_{(k)}\right)\right\}
\end{cases}
\end{equation}
or with the deltas
\begin{equation}\label{eq: final inversion tensor degenerate}
    \mathcal{F}(\vec{x})=\sum_{k=0}^{\mathcal{N}-1}\delta\left(\vec{x}-\vec{X}_{(k)}\right).
\end{equation}
This allows that if $\vec{x}$ has the value of one of the solutions, all deltas except its corresponding one will be zero.

Now, to extract the solution, we use the Half Partial Trace as before
\begin{equation}\label{eq: partial distribution}
    F_j(x_j)=\int_{\mathcal{X}_{-j}} \mathcal{F}(\vec{x}) \prod_{i=0\backslash i\neq j}^{n-1}dx_i =
    \int_{\mathcal{X}_{-j}}  \sum_{k=0}^{\mathcal{N}-1}\delta\left(\vec{x}-\vec{X}_{(k)}\right)\prod_{i=0\backslash i\neq j}^{n-1}dx_i =
    \sum_{k=0}^{\mathcal{N}-1}\delta\left(x_j-\left(\vec{X}_{(k)}\right)_j\right),
\end{equation}
obtaining the sum of the deltas of the solutions. Due to this, if we integrate with the linear functions as before
\begin{equation}
    \Omega_j = \int_{\mathcal{X}_j} x_j F_j(x_j) dx_j =  \int_{\mathcal{X}_j}  x_j \sum_{k=0}^{\mathcal{N}-1}\delta\left(x_j-\left(\vec{X}_{(k)}\right)_j\right) dx_j = \sum_{k=0}^{\mathcal{N}-1}\left(\vec{X}_{(k)}\right)_j,
\end{equation}
resulting in a mixing of the solutions. However, we can use this method to determine all the moments of the distribution \eqref{eq: partial distribution}. The $q$-th raw moment $\mu_{q,j}$ of the distribution $F_j(x_j)$ can be calculated connecting the $q$-th polynomial function $p_q(x)=x^q$ to the free edge of the variable to determine, and it is
\begin{equation}
    \mu_{q,j}= \int_{\mathcal{X}_j} x_j^q F_j(x_j) dx_j = \int_{\mathcal{X}_j} x_j^k \sum_{k=0}^{\mathcal{N}-1}\delta\left(x_j-\left(\vec{X}_{(k)}\right)_j\right) dx_j = \sum_{k=0}^{\mathcal{N}-1}\left(\vec{X}_{(k)}\right)_j^q.
\end{equation}
This allows for the reconstruction of the values $\left(\vec{X}_{(k)}\right)_j$. However, we need a simpler method. We know that the first moment $\mu_{0,j}$ gives us the number of compatible solutions (including degeneration). We will define the $q$-th partial moment of $F_j(x_j)$ up to $r$ as
\begin{equation}
\begin{gathered}
    \mu_{q,j}^-(r)=\int_{-\infty}^{r} x_j^q \sum_{k=0}^{\mathcal{N}-1}\delta\left(x_j-\left(\vec{X}_{(k)}\right)_j\right) dx_j= \int_{\mathcal{X}_j} x_j^q (1-H(x_j-r))\sum_{k=0}^{\mathcal{N}-1}\delta\left(x_j-\left(\vec{X}_{(k)}\right)_j\right) dx_j,\\
    \mu_{q,j}^+(r)=\int_{r}^{\infty} x_j^q \sum_{k=0}^{\mathcal{N}-1}\delta\left(x_j-\left(\vec{X}_{(k)}\right)_j\right) dx_j= \int_{\mathcal{X}_j} x_j^q H(x_j-r)\sum_{k=0}^{\mathcal{N}-1}\delta\left(x_j-\left(\vec{X}_{(k)}\right)_j\right) dx_j,
\end{gathered}
\end{equation}
being $H(x)$ the Heaviside step-function defined as
\begin{equation}
    H(x)=\begin{cases}
        1&\text{if } x\geq 0,\\
        0&\text{if } x< 0.
    \end{cases}
\end{equation}
This is equivalent to replacing the function node $p_q(x_j)$ by $p_q(x_j)(1-H(x_j-r))$ and $p_q(x_j)H(x_j-r)$, respectively. So, we can use the following process for every variable $x_j$:
\begin{protocol}[Inversion Protocol in degenerate cases]\label{protocol: inversion}
$ $\\
    \begin{enumerate}
    \item We calculate the first partial moments $\mu_{0,j}^+(0)$ and $\mu_{0,j}^-(0)$.
    \item 
    \begin{enumerate}
        \item If $\mu_{0,j}^+(0)=1$, we know there is only one solution in this space, so we compute as in the non-degenerate case, but the final integral is only over the positive numbers. We finish.
        \item Otherwise, if $\mu_{0,j}^-(0)=1$, we know there is only one solution in this space, so we compute as in the non-degenerate case, but the final integral is only over the negative numbers. We finish.
        \item Otherwise, if $\mu_{0,j}^+(0)=0$ and $\mu_{0,j}^-(0)=0$, there are no solutions, so we finish.
        \item Otherwise if $\mu_{0,j}^+(0)=\infty$ or $\mu_{0,j}^+(0)=\infty$, we use the renormalized search. We finish.
        \item  If $\mu_{0,j}^+(0)>1$, we calculate $\mu_{1,j}^+(0)$ and $\mu_{2,j}^+(0)$.
        \begin{itemize}
            \item If $\mu_{2,j}^+(0)=\frac{\mu_{1,j}^+(0)^2}{\mu_{0,j}^+(0)}$, we know all the solutions are degenerated in the same value of $x_j$, so the correct value for $x_j$ will be $\frac{\mu_{1,j}^+(0)}{\mu_{0,j}^+(0)}$ because
            \begin{equation}
            \mu_{1,j}^+(0) =\sum_{k=0}^{\mathcal{N}-1}\left(\vec{X}_{(k)}\right)_j=\mathcal{N}\left(\vec{X}_{(0)}\right)_j=\mu_{0,j}^+(0)\left(\vec{X}_{(0)}\right)_j.
            \end{equation}
            We finish.
        \end{itemize}
        \item  If $\mu_{0,j}^-(0)>1$, we calculate $\mu_{1,j}^-(0)$ and $\mu_{2,j}^-(0)$.
            \begin{itemize}
            \item If $\mu_{2,j}^-(0)=\frac{\mu_{1,j}^-(0)^2}{\mu_{0,j}^-(0)}$, we know all the solutions are degenerated in the same value of $x_j$, so the correct value for $x_j$ will be $\frac{\mu_{1,j}^-(0)}{\mu_{0,j}^-(0)}$ because
            \begin{equation}
            \mu_{1,j}^-(0) =-\sum_{k=0}^{\mathcal{N}-1}\left|\left(\vec{X}_{(k)}\right)_j\right|=-\mathcal{N}\left|\left(\vec{X}_{(0)}\right)_j\right|=\mu_{0,j}^-(0)\left(\vec{X}_{(0)}\right)_j.
            \end{equation}
            We finish.
        \end{itemize}
        \item If the the previous steps do not work, we choose a value $r_+>0$ and a value $r_-<0$ and calculate the first partial moments $\mu_{0,j}^+(r_+)$ and $\mu_{0,j}^-(r_-)$.
    \end{enumerate}
    \item
    \begin{enumerate}
        \item If $\mu_{0,j}^+(r_+)=1$, we know there is only one solution in this space, so we compute as in the non-degenerate case, but the final integral is only over the range $[r_+,\infty)$. We finish.
        \item Otherwise, if $\mu_{0,j}^-(r_-)=1$, we know there is only one solution in this space, so we compute as in the non-degenerate case, but the final integral is only over the range $(-\infty,r_-]$. We finish.
        \item Otherwise, if $\mu_{0,j}^+(0)=0$ and $\mu_{0,j}^-(0)=0$, there are no solutions in this range. We choose a lower $r_+>0$ and a higher $r_-<0$, calculate the first partial moments $\mu_{0,j}^+(r_+)$ and $\mu_{0,j}^-(r_-)$,  and return to step 3.
        \item Otherwise, if $|\mu_{0,j}^-(r_-)-\mu_{0,j}^-(r_{-,prev})|=1$, being $r_{-,prev}$ the previous $r_-$, in the region $[r_-,r_{-,prev}]$, there is only one solution. We compute as in the non-degenerate case, but the final integral is only over the range $[r_-,r_{-,prev}]$. We finish.
        \item Otherwise, if $|\mu_{0,j}^+(r_+)-\mu_{0,j}^+(r_{+,prev})|=1$, being $r_{+,prev}$ the previous $r_+$, in the region $[r_+,r_{+,prev}]$ there is only one solution. We compute as in the non-degenerate case, but the final integral is only over the range $[r_+,r_{+,prev}]$. We finish.
        \item Otherwise, if $\mu_{0,j}^+(0)=0$ and $\mu_{0,j}^-(0)=0$, there are no solutions in this range. We choose a lower $r_+>0$ and a higher $r_-<0$, calculate the first partial moments $\mu_{0,j}^+(r_+)$ and $\mu_{0,j}^-(r_-)$,  and return to step 3.
        \item  If $\mu_{0,j}^+(r_+)>1$, we calculate $\mu_{1,j}^+(r_+)$ and $\mu_{2,j}^+(r_+)$.
        \begin{itemize}
            \item If $\mu_{2,j}^+(r_+)=\frac{\mu_{1,j}^+(r_+)^2}{\mu_{0,j}^+(r_+)}$, we know all the solutions are degenerated in the same value of $x_j$, so the correct value for $x_j$ will be $\frac{\mu_{1,j}^+(r_+)}{\mu_{0,j}^+(r_+)}$ because
            \begin{equation}
            \mu_{1,j}^+(r_+) =\sum_{k=0}^{\mathcal{N}-1}\left(\vec{X}_{(k)}\right)_j=\mathcal{N}\left(\vec{X}_{(0)}\right)_j=\mu_{0,j}^+(r_+)\left(\vec{X}_{(0)}\right)_j.
            \end{equation}
            We finish.
        \end{itemize}
        \item  If $\mu_{0,j}^-(r_-)>1$, we calculate $\mu_{1,j}^-(r_-)$ and $\mu_{2,j}^-(r_-)$.
            \begin{itemize}
            \item If $\mu_{2,j}^-(r_-)=\frac{\mu_{1,j}^-(r_-)^2}{\mu_{0,j}^-(r_-)}$, we know all the solutions are degenerated in the same value of $x_j$, so the correct value for $x_j$ will be $\frac{\mu_{1,j}^-(r_-)}{\mu_{0,j}^-(r_-)}$ because
            \begin{equation}
            \mu_{1,j}^-(r_-) =-\sum_{k=0}^{\mathcal{N}-1}\left|\left(\vec{X}_{(k)}\right)_j\right|=-\mathcal{N}\left|\left(\vec{X}_{(0)}\right)_j\right|=\mu_{0,j}^-(r_-)\left(\vec{X}_{(0)}\right)_j.
            \end{equation}
            We finish.
        \end{itemize}
        \item If the two previous steps do not work, we choose a higher value $r_+>0$ if $\mu_{0,j}^+(r_+)>1$ and lower if $\mu_{0,j}^+(r_+)=0$ and a lower value $r_-<0$ if $\mu_{0,j}^-(r_-)>1$ and higher if $\mu_{0,j}^-(r_-)=0$, and calculate the first partial moments $\mu_{0,j}^+(r_+)$ and $\mu_{0,j}^-(r_-)$. We return to step 3.
    \end{enumerate}
\end{enumerate}
\end{protocol}

In this way, we explore sections of the space until we have a region with only one solution. Repetition of the process from the beginning changing the integration regions allows us to obtain all the solutions. The main limitation of this protocol is that, in situations of multiple solutions infinitesimally near, it could fail. For example, the inversion of the ReLU function
\begin{equation}
    ReLU = \max(0,x).
\end{equation}
In this case, if $Y=0$, all negative regions are a valid solution, so the process will never reach a region with only one solution. In this case, the result for an `everything is solution' region $\mathcal{X'}$ will be
\begin{equation}
    \mathcal{N'}=\int_{\mathcal{X'}}\int_\mathcal{Y}\Phi(\vec{x},\vec{y})\delta(\vec{y}-\vec{Y}) d\vec{y}d\vec{x} = \int_{\mathcal{X'}}\int_\mathcal{Y} d\vec{y}\delta(\vec{0})d\vec{x}= \delta(\vec{0})V(\mathcal{X'}),
\end{equation}
being $V(\mathcal{X'})$ the volume of the region. This means that the number obtained will be infinite. This warns us of the situation of infinite solutions. However, if we consider the `everything solution' plus another region, not every point is a solution, but the integral diverges. To determine the correct region, we can do the following. Given two small regions $\mathcal{X'}$ and $\mathcal{X''}\subset \mathcal{X'}$, we calculate
\begin{equation}
    \frac{\mathcal{N'}}{\mathcal{N''}} = \frac{\int_{\mathcal{X'}}\int_\mathcal{Y}\Phi(\vec{x},\vec{y})\delta(\vec{y}-\vec{Y}) d\vec{y}d\vec{x}}{\int_{\mathcal{X''}}\int_\mathcal{Y}\Phi(\vec{x},\vec{y})\delta(\vec{y}-\vec{Y}) d\vec{y}d\vec{x}}
\end{equation}
If both regions are `everything solution' ones or they are not, but they only have solutions in a common `everything solution' region, the result is
\begin{equation}
    \frac{\mathcal{N'}}{\mathcal{N''}} =
    \frac{\int_{\mathcal{X'}}\int_\mathcal{Y} d\vec{y}\delta(\vec{0})d\vec{x}}{\int_{\mathcal{X''}}\int_\mathcal{Y} d\vec{y}\delta(\vec{0})d\vec{x}}=
    \frac{\delta(\vec{0})V(\mathcal{X'}\times)}{\delta(\vec{0})V(\mathcal{X''})}=
    \frac{V(\mathcal{X'})}{V(\mathcal{X''})}.
\end{equation}
If the big one $\mathcal{X'}$ is not an `everything solution' region,
\begin{equation}
    \mathcal{N'}< \delta(\vec{0})V(\mathcal{X'}),
\end{equation}
so, if $\mathcal{X''}$ is an `everything solution' region, then
\begin{equation}
    \frac{\mathcal{N'}}{\mathcal{N''}} =
    \frac{\int_{\mathcal{X'}}\int_\mathcal{Y} d\vec{y}\delta(\vec{0})d\vec{x}}{\int_{\mathcal{X''}}\int_\mathcal{Y} d\vec{y}\delta(\vec{0})d\vec{x}}<
    \frac{\delta(\vec{0})V(\mathcal{X'})}{\delta(\vec{0})V(\mathcal{X''})}=
    \frac{V(\mathcal{X'})}{V(\mathcal{X''})}.
\end{equation}
So we only need to reduce the regions $\mathcal{X''}$ and $\mathcal{X'}$ in the same rhythm until we reach this situation. This will eventually happen due to $\mathcal{X''}\subset \mathcal{X'}$ and it will become an `everything solution' region before $\mathcal{X'}$. We call this process the \textit{renormalized search}.

\begin{theorem}[Number of solutions equation of inversion problems] 
$ $\\
    Given a function $f:\mathcal{X}\rightarrow \mathcal{Y}$, with $\mathcal{X}\subseteq  \mathbb{R}^n$ and $\mathcal{Y}\subseteq \mathbb{R}^m$,  and a set of values $\vec{Y}\in  \mathcal{Y}$, there always exists an exact explicit equation that returns the number $\mathcal{N}$ of values of $\vec{X}_{(k)}\in\mathcal{X}$ that satisfy $\vec{Y}=f(\vec{X}_{(k)})$. This equation is
    \begin{equation}
        \mathcal{N}=\int_{\mathcal{X}}\int_\mathcal{Y}\Phi(\vec{x},\vec{y})\delta(\vec{y}-\vec{Y}) d\vec{y}d\vec{x}=\int_{\mathcal{X}}\mathcal{F}(\vec{x}) d\vec{x}=\mu_{0,j},
    \end{equation}
    being $\Phi(\vec{x},\vec{y})$ the FTN obtained from tensorizing the logical circuit of the function $f$.
\end{theorem}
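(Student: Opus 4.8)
The plan is to chain three equalities, each reducible to a sifting property of the Dirac delta already used in the excerpt, and to base the entire argument on the degenerate-case decomposition of the restricted inversion density. First I would recall that tensorizing the logical circuit of $f$ yields the free inversion density $\Phi(\vec{x},\vec{y})=\delta(\vec{y}-f(\vec{x}))$, as in \eqref{eq: inversion function with deltas}. Integrating this against the projection $\delta(\vec{y}-\vec{Y})$ over $\mathcal{Y}$ and applying the sifting property collapses the $\vec{y}$ integral, giving the restricted inversion density $\mathcal{F}(\vec{x})=\delta(\vec{Y}-f(\vec{x}))$. By Fubini this establishes the first equality $\int_{\mathcal{X}}\int_{\mathcal{Y}}\Phi(\vec{x},\vec{y})\delta(\vec{y}-\vec{Y})\,d\vec{y}\,d\vec{x}=\int_{\mathcal{X}}\mathcal{F}(\vec{x})\,d\vec{x}$.

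For the second equality I would invoke the degenerate-case decomposition \eqref{eq: final inversion tensor degenerate}: when $\vec{Y}=f(\vec{X}_{(k)})$ admits exactly $\mathcal{N}$ isolated solutions, the restricted density factorizes with unit weight on each, $\mathcal{F}(\vec{x})=\sum_{k=0}^{\mathcal{N}-1}\delta(\vec{x}-\vec{X}_{(k)})$. Integrating over the full domain and using that each $\vec{X}_{(k)}\in\mathcal{X}$ makes every term integrate to one, so $\int_{\mathcal{X}}\mathcal{F}(\vec{x})\,d\vec{x}=\sum_{k=0}^{\mathcal{N}-1}1=\mathcal{N}$.

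The third equality $\int_{\mathcal{X}}\mathcal{F}(\vec{x})\,d\vec{x}=\mu_{0,j}$ then follows by Fubini once more: splitting the integral over $\mathcal{X}$ into the $j$-th coordinate and its complement $\mathcal{X}_{-j}$, the inner integral is precisely the $j$-th partial restricted inversion function $F_j(x_j)$ of \eqref{eq: partial distribution}, and the remaining outer integral $\int_{\mathcal{X}_j}F_j(x_j)\,dx_j$ is the zeroth moment $\mu_{0,j}$, obtained by attaching the trivial polynomial node $p_0(x_j)=1$. Since the count does not depend on which coordinate is singled out, the value is the same for every $j$, and the asserted string of equalities holds.

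The step I expect to be the main obstacle is the rigorous justification of the unit-weight decomposition $\mathcal{F}(\vec{x})=\sum_k\delta(\vec{x}-\vec{X}_{(k)})$. A naive composition of a Dirac delta with $f$ would carry Jacobian factors $1/|\det Df|$ at each zero; the construction avoids these because each operator delta is built to deposit unit amplitude at the consistent point rather than to transform a measure, which is exactly the point stressed in the remark that the delta form of the tensorization is \emph{not} equivalent to the cases form. I would also have to isolate the continuum-of-solutions regime, such as the inversion of $\mathrm{ReLU}$ with $Y=0$, where the isolated-zero hypothesis fails: there the integral diverges as $\int_{\mathcal{X}'}\mathcal{F}(\vec{x})\,d\vec{x}=\delta(\vec{0})\,V(\mathcal{X}')=\infty$, which the equation correctly reports as $\mathcal{N}=\infty$, in agreement with the renormalized-search discussion that precedes the statement.
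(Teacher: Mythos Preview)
Your proposal is correct and follows exactly the route the paper takes: the theorem in the paper carries no separate proof block but rests on the preceding development, namely the identification $\Phi(\vec{x},\vec{y})=\delta(\vec{y}-f(\vec{x}))$, the degenerate decomposition \eqref{eq: final inversion tensor degenerate}, the integration of that sum of deltas, and the observation that the zeroth moment $\mu_{0,j}$ counts the compatible solutions. Your additional remarks on the Jacobian subtlety and the continuum-of-solutions case are more explicit than anything the paper spells out at that point, but they are consistent with, and in fact sharpen, the paper's own commentary.
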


\begin{theorem}[Protocol of obtaining the solution of inversion problems with degeneration]\label{th: degenerate inversion}
$ $\\
    Given a function $f:\mathcal{X}\rightarrow \mathcal{Y}$, with $\mathcal{X}\subseteq  \mathbb{R}^n$ and $\mathcal{Y}\subseteq \mathbb{R}^m$, and a set of values $\vec{Y}\in\mathcal{Y}$, there always exists an exact, explicit function $\mathcal{F}(\vec{x})$ given by the logical FTN from the tensorization of the logical circuit of the problem that, with the protocol \ref{protocol: inversion}, returns the values of one $\vec{X}_{(k)}\in\mathbb{R}^n$ that satisfies $\vec{Y}=f(\vec{X}_{(k)})$.
\end{theorem}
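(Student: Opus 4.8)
The plan is to assemble the claim from three pieces already in place: the construction of $\mathcal{F}(\vec{x})$, the moment-extraction identities, and the branching logic of Protocol~\ref{protocol: inversion}. First I would observe that the function whose existence is asserted is simply the restricted inversion density: any computable $f$ admits at least its computational circuit, which tensorizes via \eqref{eq: function tensorization deltas} into $\Phi(\vec{x},\vec{y})=\delta(\vec{y}-f(\vec{x}))$; projecting with $\prod_i\delta(y_i-Y_i)$ and integrating out $\vec{y}$ collapses this to $\mathcal{F}(\vec{x})=\delta(\vec{Y}-f(\vec{x}))$, which for a finite solution set $\{\vec{X}_{(k)}\}_{k=0}^{\mathcal{N}-1}$ equals $\sum_k \delta(\vec{x}-\vec{X}_{(k)})$ as in \eqref{eq: final inversion tensor degenerate}. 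This establishes both the existence and the explicit FTN form of $\mathcal{F}$.

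Next I would verify that every quantity the protocol reads off $\mathcal{F}$ has the claimed meaning, using only the sifting property of the Dirac delta. The half-partial-trace collapse \eqref{eq: partial distribution} gives $F_j(x_j)=\sum_k\delta\big(x_j-(\vec{X}_{(k)})_j\big)$; inserting the polynomial node $x_j^q$ together with the Heaviside window $H(x_j-r)$ or $1-H(x_j-r)$ then yields $\mu_{q,j}^{\pm}(r)$ as the sum of $(\vec{X}_{(k)})_j^q$ restricted to solutions on the corresponding side of $r$, so that $\mu_{0,j}^{\pm}(r)$ literally counts them. These computations are routine.

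The substance of the proof is the case analysis matching the branches of Protocol~\ref{protocol: inversion}. The key lemma is that the stopping test $\mu_{2,j}^{\pm}=(\mu_{1,j}^{\pm})^2/\mu_{0,j}^{\pm}$ is exactly the Cauchy--Schwarz (zero-variance) equality condition for the measure $\sum_k\delta(x_j-(\vec{X}_{(k)})_j)$: it holds if and only if all solutions in the window share one value of $x_j$, which must then equal $\mu_{1,j}^{\pm}/\mu_{0,j}^{\pm}$, precisely as the displayed identities in the protocol record. When instead a window is found to contain a single solution---detected by $\mu_{0,j}^{\pm}=1$ or by a unit jump $|\mu_{0,j}^{\pm}(r)-\mu_{0,j}^{\pm}(r_{\mathrm{prev}})|=1$---I would apply the non-degenerate extraction of Theorem~\ref{th: general inversion} with the final $x_j$-integral restricted to that window, which returns $(\vec{X}_{(k)})_j$ exactly; the remaining branches only bisect or translate the window.

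The main obstacle is termination, and it is genuine. The bisection argument halts only when the projected coordinates $\{(\vec{X}_{(k)})_j\}$ form a finite, positively separated set; the ReLU example in the excerpt shows that a positive-measure continuum of solutions defeats naive isolation, which is why the renormalized search (proved correct just above the theorem) must be invoked in the $\mu_{0,j}^{\pm}=\infty$ branch. A secondary subtlety is cross-coordinate consistency---ensuring that the coordinates extracted for different $j$ belong to the \emph{same} $\vec{X}_{(k)}$, since isolating $x_j$ in a marginal window need not isolate a single full solution---which I would secure by carrying the region restrictions used to fix earlier coordinates forward into the FTN when determining later ones. I would therefore frame the guarantee conditionally: the protocol halts and returns a genuine solution whenever the solution set is finite or a finite union of positive-volume cells, and treat the per-branch correctness above as the core of the argument.
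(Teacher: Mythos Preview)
Your proposal is correct and matches the paper's approach: the paper offers no separate proof for this theorem, treating it as a summary of the preceding constructions (the tensorization yielding $\mathcal{F}(\vec{x})=\sum_k\delta(\vec{x}-\vec{X}_{(k)})$, the moment identities, and Protocol~\ref{protocol: inversion}), which is exactly the assembly you describe. Your analysis is in fact more careful than the paper's on two points it leaves implicit---termination of the bisection and cross-coordinate consistency of the extracted $X_j$'s---and your suggested fix of carrying forward the region restrictions into later FTN evaluations is a genuine addition, not something the paper spells out.
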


\begin{theorem}[Protocol of obtaining all the solutions of inversion problems with degeneration]\label{th: degenerate inversion}
$ $\\
    Given a function $f:\mathcal{X}\rightarrow \mathcal{Y}$, with $\mathcal{X}\subseteq  \mathbb{R}^n$ and $\mathcal{Y}\subseteq \mathbb{R}^m$, and a set of values $\vec{Y}\in\mathcal{Y}$, there always exists an exact, explicit function $\mathcal{F}(\vec{x})$ given by the logical FTN from the tensorization of the logical circuit of the problem that, with the protocol \ref{protocol: inversion} and repeated in different regions, returns all the values $\vec{X}_{(k)}\in\mathcal{X}$ that satisfy $\vec{Y}=f(\vec{X}_{(k)})$.
\end{theorem}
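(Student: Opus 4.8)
The plan is to bootstrap the single-solution extraction guaranteed by the preceding theorem into a complete enumeration, using the localized FTNILO number as a counting certificate. The starting point is the degenerate restricted inversion density \eqref{eq: final inversion tensor degenerate}, $\mathcal{F}(\vec{x})=\sum_{k=0}^{\mathcal{N}-1}\delta(\vec{x}-\vec{X}_{(k)})$. The first step is to establish a region-counting lemma: for any measurable $\mathcal{X}'\subseteq\mathcal{X}$,
\[
\int_{\mathcal{X}'}\mathcal{F}(\vec{x})\,d\vec{x}=\bigl|\{k:\vec{X}_{(k)}\in\mathcal{X}'\}\bigr|,
\]
so that integrating the FTN over any subregion returns exactly the number of solutions it contains. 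The partial moments $\mu_{0,j}^{\pm}(r)$ are then the special cases of this count over coordinate half-lines, and Protocol~\ref{protocol: inversion} is precisely a bisection driven by these counts.

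Second, I would formalize the iteration itself. Each time the protocol terminates it certifies a region $\mathcal{X}'$ with $\int_{\mathcal{X}'}\mathcal{F}\,d\vec{x}=1$ and returns the unique $\vec{X}_{(k)}$ inside it via the non-degenerate extraction restricted to $\mathcal{X}'$. Removing that solution, equivalently passing to the residual density $\mathcal{F}(\vec{x})-\delta(\vec{x}-\vec{X}_{(k)})$ by excising $\mathcal{X}'$ from the subsequent integration domain, yields an inversion density of FTNILO number $\mathcal{N}-1$. Iterating, the count therefore decreases by exactly one per extraction, and each step is an application of the already-proven single-solution theorem on a smaller domain.

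Third comes exhaustiveness and termination. If $\mathcal{N}$ is finite, the strictly decreasing count forces termination after exactly $\mathcal{N}$ repetitions, at which point the residual FTNILO number vanishes, certifying that no solution has been missed. If $\mathcal{N}$ is countably infinite, I would exhaust $\mathcal{X}$ by a nested sequence of bounded boxes $\mathcal{X}^{(1)}\subset\mathcal{X}^{(2)}\subset\cdots$ with $\bigcup_m\mathcal{X}^{(m)}=\mathcal{X}$, apply the finite argument inside each box, and observe that every isolated solution lies in some $\mathcal{X}^{(m)}$ and is therefore produced at a finite stage of the overall process.

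The hard part will be the continuum case, where solutions are not isolated and the count-bisection never reaches a region holding a single solution, as in the $\mathrm{ReLU}$ example with $Y=0$, where $\int_{\mathcal{X}'}\mathcal{F}\,d\vec{x}=\delta(\vec{0})V(\mathcal{X}')$ diverges on any `everything-solution' cell. Here I would invoke the renormalized search: comparing localized counts over nested regions $\mathcal{X}''\subset\mathcal{X}'$ through the ratio $\mathcal{N}'/\mathcal{N}''$, which equals $V(\mathcal{X}')/V(\mathcal{X}'')$ exactly when both cells are filled with solutions and is strictly smaller otherwise. This separates the solution set into its atomic part (isolated points, handled by the counting iteration) and its positive-measure part (a union of `everything-solution' cells, returned explicitly as those cells). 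The remaining subtlety is to show this decomposition is exhaustive and that the renormalization terminates; I would close the argument by treating $\mathcal{F}$ as a finite Borel measure whose atomic and continuous parts are disjoint and jointly exhaust the solution set, so that every $\vec{X}_{(k)}$ and every everything-solution cell is eventually reported.
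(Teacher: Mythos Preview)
The paper gives no formal proof of this theorem; it is stated as a direct consequence of Protocol~\ref{protocol: inversion} together with the single sentence ``Repetition of the process from the beginning changing the integration regions allows us to obtain all the solutions,'' and the renormalized-search paragraph for the continuum case. Your proposal follows the same conceptual route---isolate one solution via the protocol, excise it, repeat---but formalizes it far beyond what the paper does: you add an explicit region-counting lemma, a termination argument via the strictly decreasing FTNILO number, an exhaustion by bounded boxes for countably many isolated solutions, and a Lebesgue-style atomic/continuous decomposition for the continuum case. None of this appears in the paper; the paper simply asserts the result.

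One caution: your starting expression $\mathcal{F}(\vec{x})=\sum_{k}\delta(\vec{x}-\vec{X}_{(k)})$ already presupposes that the solution set is a countable collection of isolated points, so invoking it before the continuum discussion is circular. The honest object is $\mathcal{F}(\vec{x})=\delta(\vec{Y}-f(\vec{x}))$, which in the continuum case is not a sum of point masses at all (cf.\ the paper's own $\delta(\vec{0})V(\mathcal{X}')$ computation). Your measure-theoretic closing paragraph is the right fix, but it should come first as the definition of the object you are decomposing, not last as a patch. With that reordering, your argument is both correct and substantially more complete than the paper's.
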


Another way to solve this problem is by simply checking the non-zero position of a one-dimensional function. If we know that we can obtain the partial distribution function of \eqref{eq: partial distribution}, and it is of the form $\sum_{k=0}^{\mathcal{N}-1}\delta\left(x_j-\left(\vec{X}_{(k)}\right)_j\right)$, it is a one-dimensional function. We can simply make a search using the one-dimensional argmax, as shown in Fig.~\ref{fig: Iteration inverse multiple},
\begin{equation}
X_j = \arg\max_{x_j} F_j(x_j).
\end{equation}

\begin{figure}[h]
    \centering
    \includegraphics[width=0.5\linewidth]{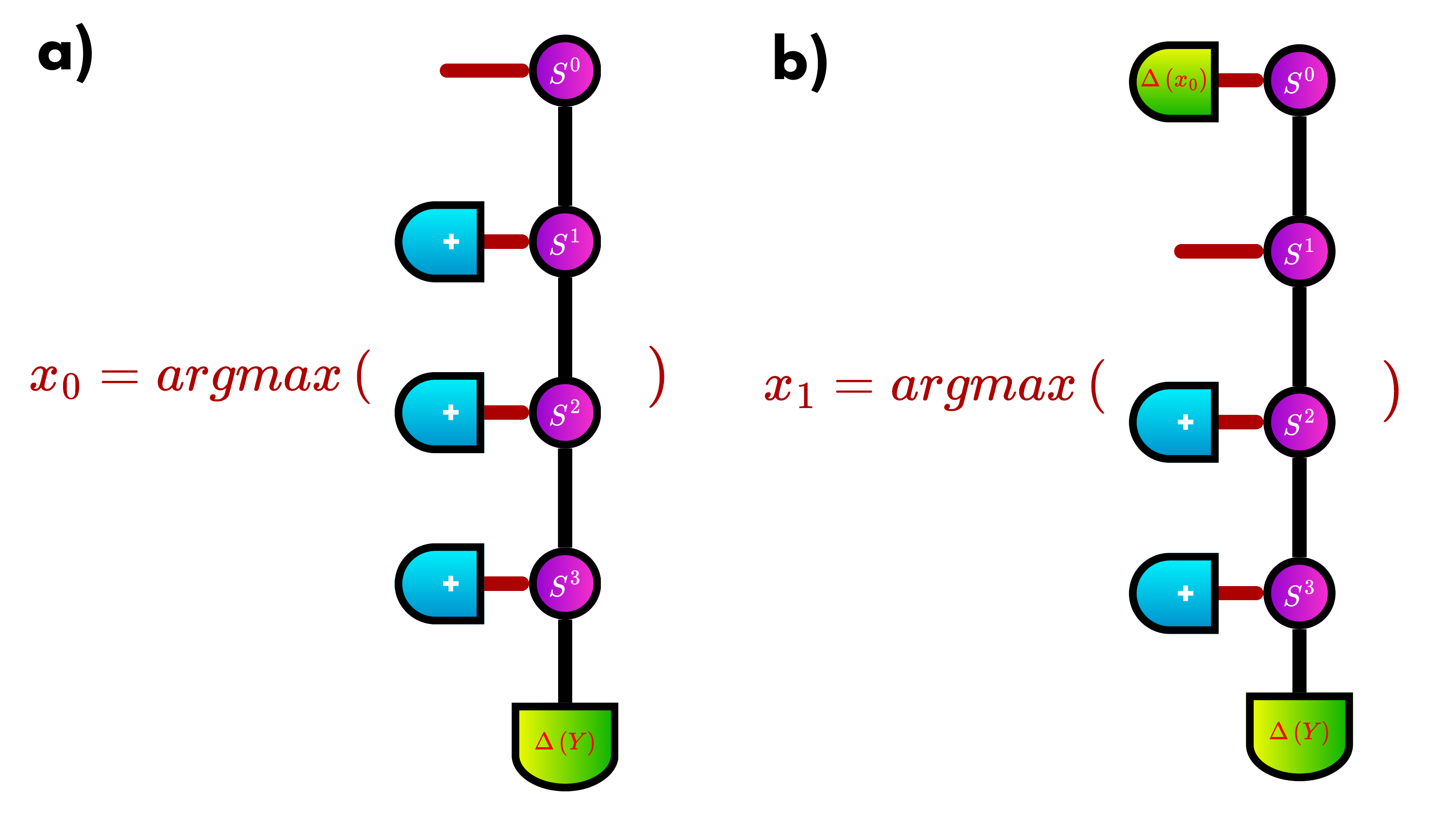}
    \caption{Example of iteration process to determine first and second variable values in an inversion problem with multiple solutions.}
    \label{fig: Iteration inverse multiple}
\end{figure}

\subsubsection{Optimization Problem}

In the optimization problem, we have a similar situation. We know that, if we have a set of values $\{\vec{X}_{(k)}, k\in[0,\mathcal{N}-1]\}$ corresponding with the minimum value of the function, we will have
\begin{equation}
    \lim_{\tau\rightarrow\infty} \frac{\mathcal{F}(\vec{x},\tau)}{\int_{\mathcal{X}}\mathcal{F}(\vec{x},\tau)d\vec{x}}=
    \lim_{\tau\rightarrow\infty} \frac{e^{-\tau f(\vec{x})}\chi_{\mathcal{R}}(\vec{x})}{\int_{\mathcal{X}}e^{-\tau f(\vec{x})}\chi_{\mathcal{R}}(\vec{x})d\vec{x}}= \sum_{k=0}^{\mathcal{N}-1}\kappa_k\delta(\vec{x}-\vec{X_{(k)}}),
\end{equation}
being $\kappa_k$ the proportion of the $k$-th delta in the global distribution, due to the second derivative of the function $f$ in the minimum points. We cannot suppose to know these values because they depend on the solution we are looking for.

From this point, the reasonings are the same as in the inversion case, taking into account the proportions. The result is
\begin{equation}
        X_j(\tau) = \arg\max_{x_j}\left(\lim_{\tau\rightarrow\infty}\frac{\int_{\mathcal{X}_{-j}}\int_\mathcal{X}\Phi(\vec{x},\vec{y},\tau)d\vec{y}\prod_{i=0\backslash i\neq j}^{n-1}dx_i}{\int_{\mathcal{X}}\int_\mathcal{X} \Phi(\vec{x},\vec{y},\tau)d\vec{y}d\vec{x}}\right).
    \end{equation}

We can only use the argmax function and not the moments because the unknown value of the proportions $\kappa_k$ modify the information we can extract from them. This can be expressed as Fig.~\ref{fig: Iteration optimization multiple}.

\begin{figure}[h]
    \centering
    \includegraphics[width=0.7\linewidth]{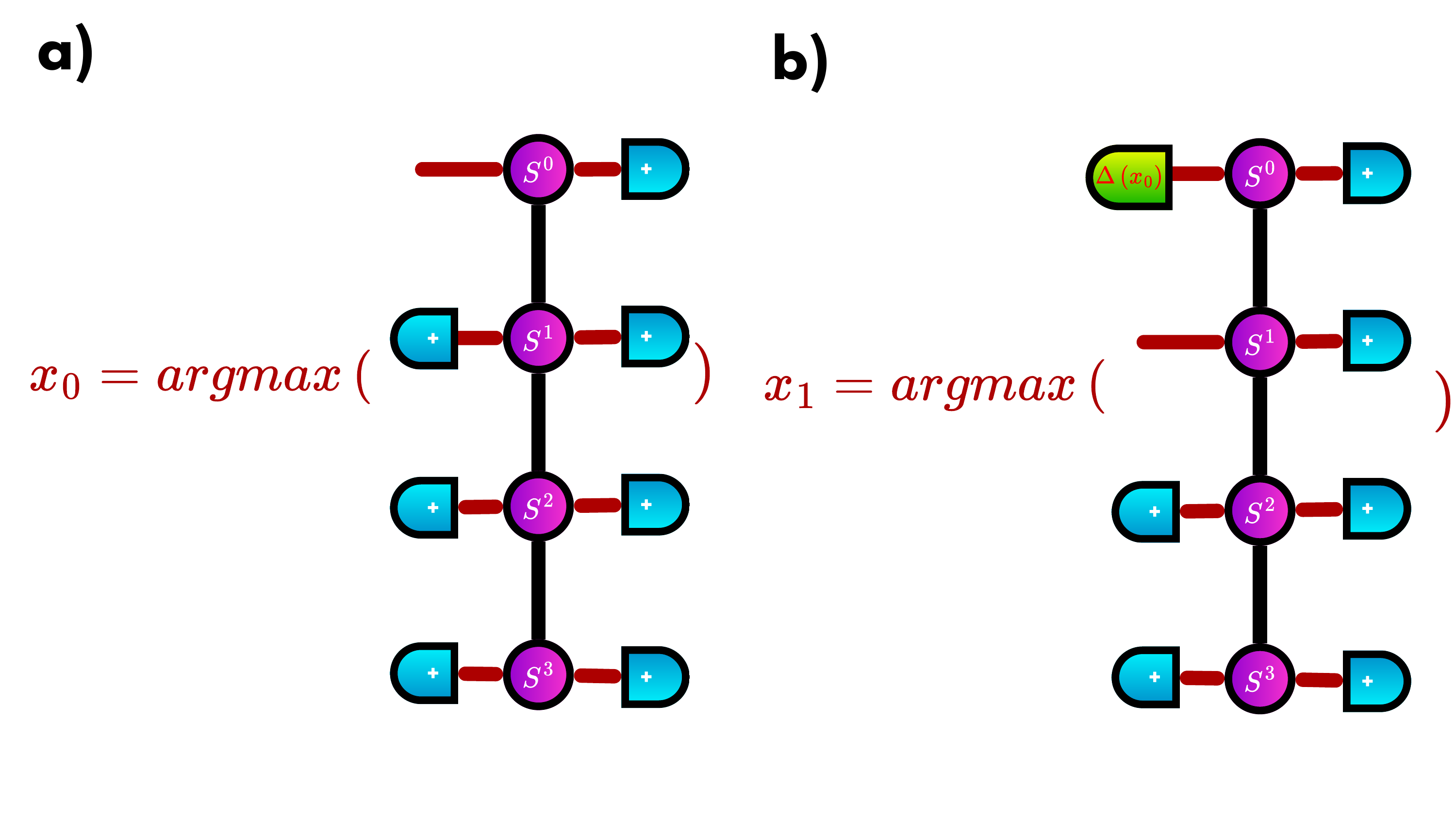}
    \caption{Iteration process to determine first and second variable values in the optimization problem with multiple solutions.}
    \label{fig: Iteration optimization multiple}
\end{figure}

\subsection{Approximations of the equation}
As we presented in \eqref{eq: identity FTN}, the delta functions can be decomposed into integrals of a product of functions. Both the inversion and the optimization functions are composed of products of only delta functions, with other ones in the optimization case. If we decompose the delta functions into other functions, we can rearrange the equation, allowing new possibilities to study their properties.

Moreover, we know that the delta function can be obtained from several limits. Some examples are
\begin{equation}
\delta(x) = \lim_{\sigma \to 0} \frac{1}{\sqrt{2\pi}\sigma} e^{-\frac{x^2}{2\sigma^2}}
\end{equation}
\begin{equation}
    \delta(x) = \lim_{\gamma \to 0} \frac{1}{\pi} \frac{\gamma}{x^2 + \gamma^2}
\end{equation}
\begin{equation}
    \delta(x) = \lim_{L \to \infty} \frac{\sin(Lx)}{\pi x}
\end{equation}
\begin{equation}
    \delta(x) = \lim_{\epsilon \to 0} \frac{1}{2\epsilon} \chi_{[-\epsilon, \epsilon]}(x)
\end{equation}
\begin{equation}
    \chi_{[-\epsilon, \epsilon]}(x) = 
\begin{cases}
1, & |x| \leq \epsilon \\
0, & \text{otherwise}
\end{cases}
\end{equation}

\begin{equation}
    \delta(x) = \lim_{\lambda \to \infty} \lambda e^{-\lambda x} H(x)
\end{equation}

This means that we decompose the deltas in the limit expressions, operate the decomposed functions, and take the limit. This allows us to describe the FTNILO functions in an approximated way.

\newpage
\section{Tensor Network limit and MeLoCoToN recover}
We have developed the formalism for continuous variables and signals taking inspiration from the MeLoCoToN formalism. The similarities between both formalisms give us an idea of how to connect them and the recovery of the MeLoCoToN as a particular case of FTNILO.

To understand the process, we take a three-operators logical FTN
\begin{equation}
    \Phi = \int A(x)B(y)C(z)\delta(x'-f(x))\delta(y'-g(x',y))\delta(z-h(y')) dxdydzdx'dy'dz.
\end{equation}
This FTN receives the input $x,y$ and returns a conditioned output $z$, making use of the signals $x'$ and $y'$. In this case, the inputs, the outputs, and the signals are continuous. If we want to impose the fact that they can only take discrete values, we could restrict the integration region. Moreover, we can also use indicator functions to impose the discrete values
\begin{equation}
    \Phi = \int \chi_\mathbb{N}(x,y,z,x',y') A(x)B(y)C(z)\delta(x'-f(x))\delta(y'-g(x',y))\delta(z-h(y')) dxdydzdx'dy'dz,
\end{equation}
being in this case $f,g,h$ discrete functions from natural numbers to natural numbers.

If we change the integration region to the natural numbers, we can change the integrals for summations and the Dirac deltas for Kronecker deltas, having
\begin{equation}
    \Phi = \sum_{x,y,z,x',y'} A(x)B(y)C(z)\delta_{x',f(x)}\delta_{y',g(x',y)}\delta_{z,h(y')}.
\end{equation}
Being $A,B,C$ functions evaluated only in natural numbers, they can be expressed as tensors
\begin{equation}
    \Phi = \sum_{x,y,z,x',y'} A_xB_yC_z\delta_{x',f(x)}\delta_{y',g(x',y)}\delta_{z,h(y')}.
\end{equation}
If we define $A'_{x,x'}=A_x\delta_{x',f(x)}$, $B'_{y,x',y'}=B_y\delta_{y',g(x',y)}$ and $C'_{z,y'}=C_z\delta_{z,h(y')}$, we get
\begin{equation}
    \Phi = \sum_{x,y,z,x',y'} A'_{x,x'}B'_{y,x',y'}C'_{z,y'},
\end{equation}
which is easily identifiable as a tensor network. This is precisely a logical tensor network, the main ingredient of the MeLoCoToN.

This means that MeLoCoToN can be recovered from FTNILO by integration only over natural numbers and by having only integer-valued functions. So, all the consequences of MeLoCoToN are valid for FTNILO.

From this point on, we can hybridize these two formalisms. The motivation is to give an efficient representation for circuits or functions with discrete internal signals. For example, functions with `if' statements. Taking as example the previous FTN, we could have the situation of $x'$ being a binary signal indicating, for example, if the integer part of $x$ is odd or even. In this case, we only need to change the integrals over the discrete variables to summations and their Dirac deltas to Kronecker deltas. In this case, the equation is
\begin{equation}
    \Phi = \sum_{x'}\int A(x)B(y)C(z)\delta_{x',f(x)}\delta(y'-g(x',y))\delta(z-h(y')) dxdydzdy'dz.
\end{equation}

This approach could be more convenient for different types of problems than a pure FTNILO or MeLoCoToN approach.

\newpage
\section{Mathematical implications}
We can see that this formalism has several mathematical implications that we will expose here.

\subsection{Function universal inversion}
The first implication, due to Theorem \ref{th: degenerate inversion}, is that if a function $f$ has only one possible $\vec{X}$ that returns the desired $\vec{Y}$, there exists an exact explicit equation to obtain $\vec{X}$. Extending it for injective functions, we can get the following theorems.
\begin{theorem}[Injective function inverse]
$ $\\
    Every injective function $f:\mathcal{X}\rightarrow \mathcal{Y}$, with $\mathcal{X}\subseteq  \mathbb{R}^n$ and $\mathcal{Y}\subseteq \mathbb{R}^m$, is invertible in its image $\mathcal{Y}$ and its inverse function is the \textit{FTNILO inversion function}
    \begin{equation}
    \vec{x}=f^{-1}(\vec{y})=(\Omega_0(\vec{y}), \Omega_1(\vec{y}), \dots, \Omega_{n-1}(\vec{y})).
    \end{equation}
    being $\Omega_j(\vec{y})$ the \textit{j-th partial FTNILO inversion function}
    \begin{equation}
        \Omega_j(\vec{y})=\int_{\mathcal{X}}x_j\Phi(\vec{x}, \vec{y}) d\vec{x}.
    \end{equation}
\end{theorem}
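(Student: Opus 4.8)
The plan is to obtain this statement as a pointwise application of Theorem~\ref{th: general inversion} across the whole image $\mathcal{Y}$. Injectivity is precisely the hypothesis that promotes the ``exactly one preimage for a fixed target'' condition into a genuine function: for every $\vec{y}\in\mathcal{Y}$ there exists, by definition of the image, at least one $\vec{X}$ with $f(\vec{X})=\vec{y}$, and injectivity makes this $\vec{X}$ unique. Hence the hypothesis $\exists!\,\vec{X}\in\mathcal{X}\,\backslash\,\vec{y}=f(\vec{X})$ of Theorem~\ref{th: general inversion} holds simultaneously at every point of $\mathcal{Y}$, so the assignment $\vec{y}\mapsto\vec{X}$ is well defined and coincides with the set-theoretic inverse $f^{-1}$ on its image.

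First I would recall that the tensorized circuit yields, by \eqref{eq: inversion function with deltas}, the free inversion density $\Phi(\vec{x},\vec{y})=\delta(\vec{y}-f(\vec{x}))$, and that fixing the target to a concrete value $\vec{y}$ collapses the outer $\vec{y}$-integral of Theorem~\ref{th: general inversion}: integrating $\delta(\vec{y}'-\vec{y})$ against $\Phi(\vec{x},\vec{y}')$ merely sets $\vec{y}'=\vec{y}$. Consequently the quantity $\Omega_j(\vec{y})=\int_{\mathcal{X}}x_j\,\Phi(\vec{x},\vec{y})\,d\vec{x}$ of the present statement is exactly the $\Omega_j$ of Theorem~\ref{th: general inversion} evaluated at $\vec{Y}=\vec{y}$. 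Invoking the single-solution reduction $\mathcal{F}(\vec{x})=\delta(\vec{x}-\vec{X})$ of \eqref{eq: final inversion tensor}, the restricted inversion density concentrates all its mass at the unique preimage, so that
\begin{equation}
\Omega_j(\vec{y})=\int_{\mathcal{X}}x_j\,\delta(\vec{x}-\vec{X})\,d\vec{x}=X_j,
\end{equation}
and reassembling the components yields $f^{-1}(\vec{y})=(\Omega_0(\vec{y}),\dots,\Omega_{n-1}(\vec{y}))$. Since $\vec{y}$ was arbitrary in $\mathcal{Y}$, this defines $f^{-1}$ on the whole image, which is the claimed invertibility.

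The step I expect to carry the real weight is the passage from $\delta(\vec{y}-f(\vec{x}))$ to $\delta(\vec{x}-\vec{X})$ underlying \eqref{eq: final inversion tensor}: a literal change of variables inside the Dirac delta would introduce a factor $|\det J_f(\vec{X})|^{-1}$ and break the clean identity $\Omega_j(\vec{y})=X_j$. In the present formalism this is absorbed by the logical-delta convention announced after \eqref{eq: function tensorization deltas}, where the composed delta is read as the constraint $f(\vec{x})=\vec{y}$ bearing unit mass at its unique root and the divergent normalisation is deliberately neglected. Making this fully rigorous for an arbitrary injective $f$ would require either $|\det J_f|\equiv 1$ or the explicit reinstatement of the Jacobian weight in $\Phi$, and this is the only place where the argument is more than a routine specialisation of Theorem~\ref{th: general inversion}.
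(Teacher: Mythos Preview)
Your proof is correct and follows essentially the same route as the paper: injectivity guarantees a unique preimage for each $\vec{y}$ in the image, so the single-solution result (Theorem~\ref{th: general inversion}) applies pointwise, and integrating $x_j$ against $\Phi(\vec{x},\vec{y})=\delta(\vec{y}-f(\vec{x}))$ recovers the $j$-th coordinate of $f^{-1}(\vec{y})$. Your final paragraph on the Jacobian factor actually goes beyond the paper's own argument, which performs the same passage from $\delta(\vec{y}-f(\vec{x}))$ to the preimage without flagging the normalisation issue and tacitly relies on the unit-mass convention announced after \eqref{eq: function tensorization deltas}.
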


\begin{proof}
    Taking into account that an injective function has only one possible value $\vec{x}$ for every $\vec{y}$, and not every $\vec{y}$ necessarily has a corresponding $\vec{X}$, we can apply Theorem \ref{th: degenerate inversion} to every point of the image. This is equivalent to having an FTN of \eqref{eq: Master inverse equation} without the $\Delta(\vec{Y})$ function imposing one specific result, as shown in \ref{fig: inversion injective}. Starting from \eqref{eq: inversion function with deltas}, if we do not integrate with the $\vec{y}$, we can follow the other same steps,
    \begin{equation}
         \varphi(x_0)=\int_{\mathcal{X}_{-0}}\Phi(\vec{x}, \vec{y}) \prod_{i=1}^{n-1}dx_i =
         \int_{\mathcal{X}_{-0}} \delta(\vec{y}-f(\vec{x}))\prod_{i=1}^{n-1}dx_i =
        \delta(\vec{y}-f(x_0,(f^{-1}(\vec{y}))_{-0})),
    \end{equation}
    being $\vec{a}_{-i}$ all the components of $\vec{a}$ but the $i$-th one.
    Integrating over $x_0$ we get
    \begin{equation}
        \Omega_0(\vec{y})=\int_{\mathcal{X}_{0}}x_0\varphi(x_0) dx_0 = \int_{\mathcal{X}_{0}}x_0\delta(\vec{y}-f(x_0,(f^{-1}(\vec{y}))_{-0})) dx_0 = (f^{-1}(\vec{y}))_{0}.
    \end{equation}
    Analogous for the other variables.
    \begin{equation}
         \varphi(x_j)=\int_{\mathcal{X}_{-j}}\Phi(\vec{x}, \vec{y}) \prod_{i=0\backslash i\neq j}^{n-1}dx_i =
         \int_{\mathcal{X}_{-j}} \delta(\vec{y}-f(\vec{x}))\prod_{i=0\backslash i\neq j}^{n-1}dx_i =
        \delta(\vec{y}-f(x_j,(f^{-1}(\vec{y}))_{-j})),
    \end{equation}
    \begin{equation}\label{eq: omega y def}
        \Omega_j(\vec{y})=\int_{\mathcal{X}_{j}}x_j\varphi(x_j) dx_j = \int_{\mathcal{X}_{j}}x_j\delta(\vec{y}-f(x_j,(f^{-1}(\vec{y}))_{-j})) dx_j = (f^{-1}(\vec{y}))_{j}.
    \end{equation}
\end{proof}

\begin{figure}[h]
    \centering
    \includegraphics[width=0.7\linewidth]{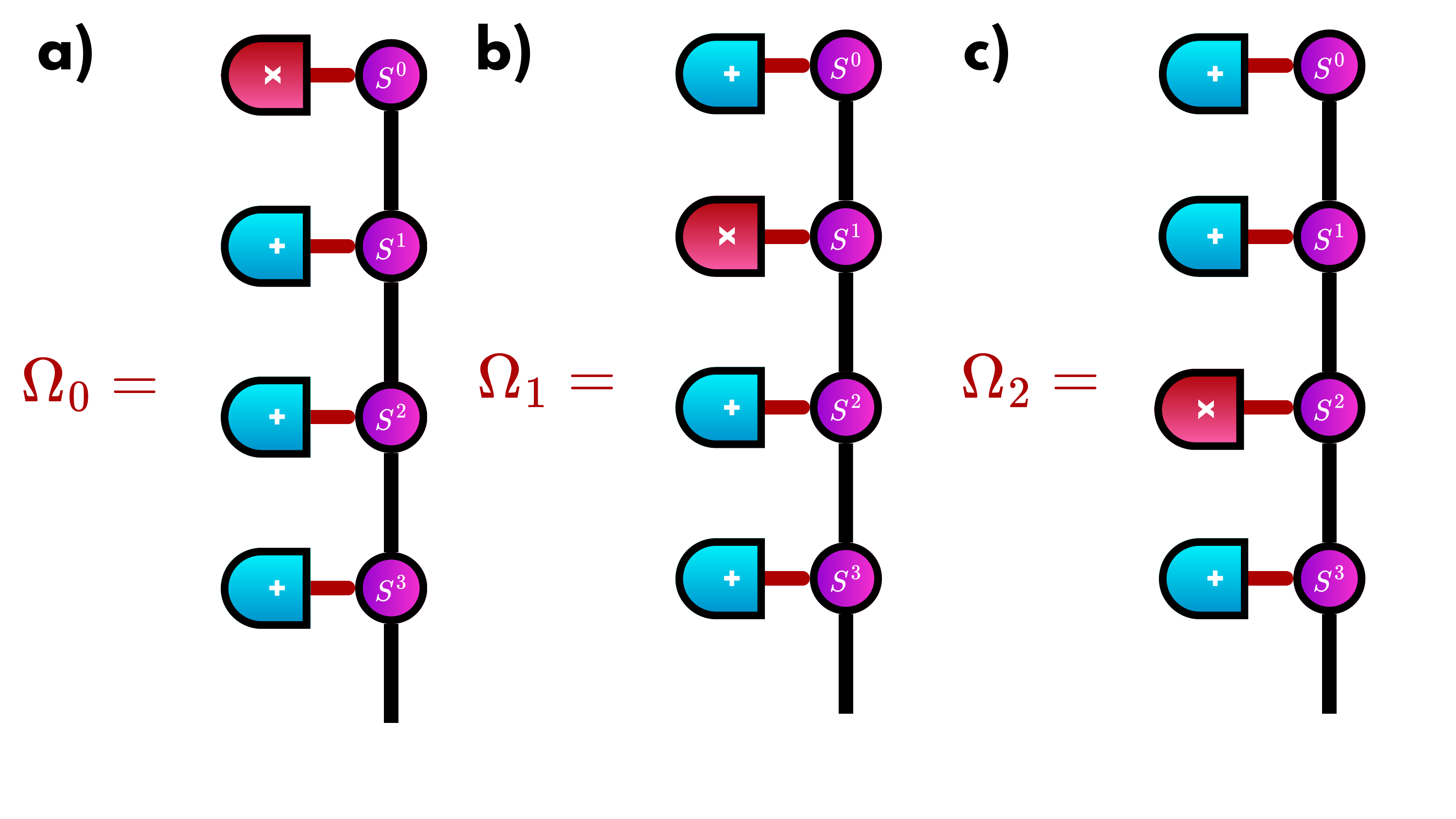}
    \caption{Iteration process to determine first and second variable values in the inversion of the function \eqref{eq: first inversion function}.}
    \label{fig: inversion injective}
\end{figure}

\begin{figure}[h]
    \centering
    \includegraphics[width=0.7\linewidth]{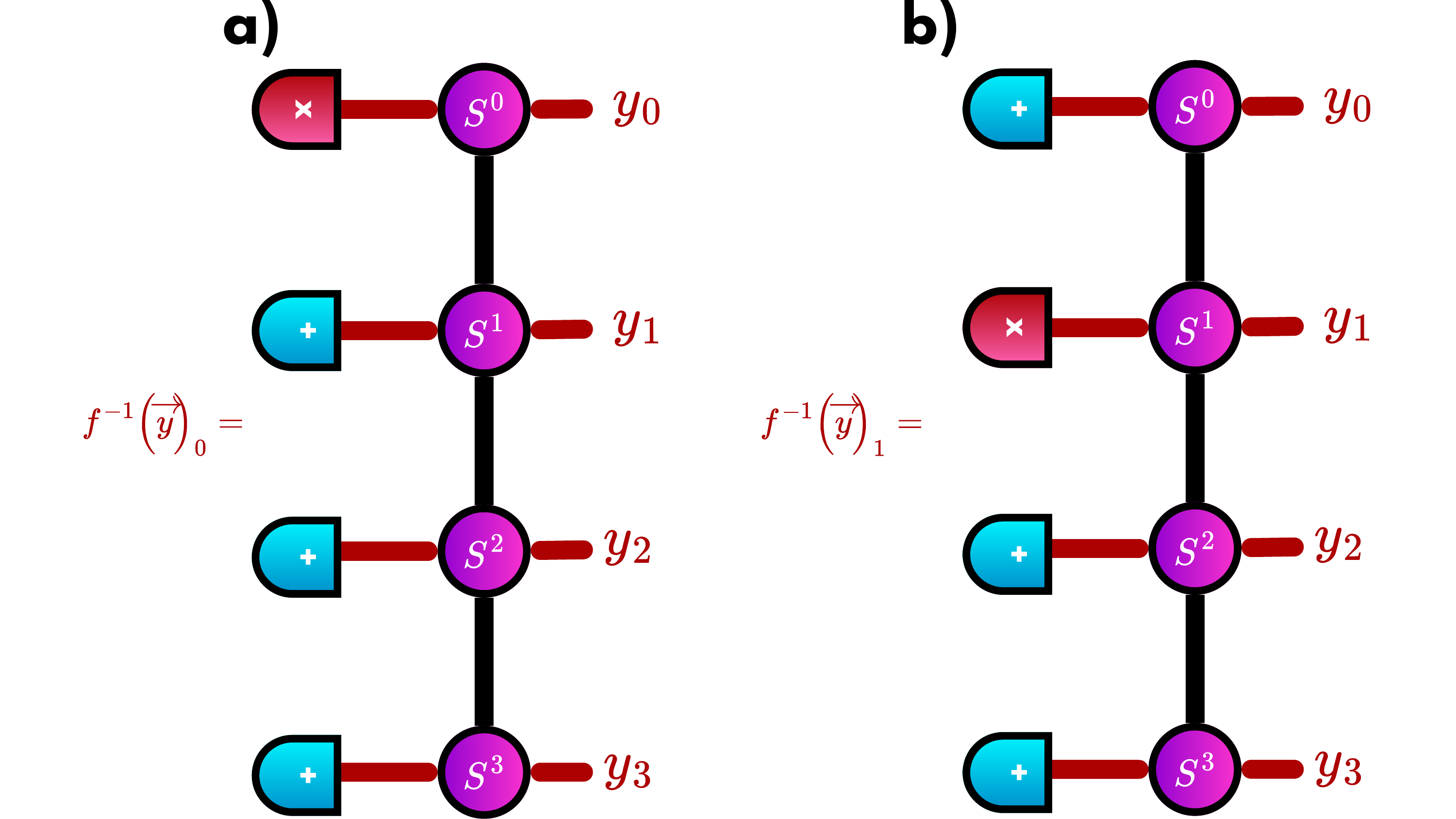}
    \caption{Example of iteration process to determine first and second variable values in the inversion of a vector-valued function.}
    \label{fig: inversion injective 2}
\end{figure}

\begin{lemma}[Bijective function inverse]
$ $\\
    Every bijective function $f:\mathcal{X}\rightarrow \mathcal{Y}$, with $\mathcal{X}\subseteq  \mathbb{R}^n$ and $\mathcal{Y}\subseteq \mathbb{R}^m$ is invertible, and its inverse equation is
    \begin{equation}
    \vec{x}=f^{-1}(\vec{y})=(\Omega_0(\vec{y}), \Omega_1(\vec{y}), \dots, \Omega_{n-1}(\vec{y})).
    \end{equation}
    being $\Omega_i(\vec{y})$ the FTN associated with the determination of the $i$-th variable.
\end{lemma}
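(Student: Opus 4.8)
The plan is to obtain this lemma as an immediate specialization of the preceding \emph{Injective function inverse} theorem. Recall that $f$ being bijective means it is both injective and surjective, and that injectivity is the only hypothesis actually used in the previous theorem. I would therefore first invoke that result to conclude that $f$ is invertible on its image and that each component of the inverse is given by the $j$-th partial FTNILO inversion function $\Omega_j(\vec{y}) = \int_{\mathcal{X}} x_j \Phi(\vec{x},\vec{y}) d\vec{x}$, together with the per-component identity $\Omega_j(\vec{y}) = (f^{-1}(\vec{y}))_j$ already established in \eqref{eq: omega y def}.

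The only genuinely new ingredient is surjectivity, and its role is simply to enlarge the domain on which the formula applies. For an injective map the FTNILO construction returns a valid preimage for each $\vec{y}$ lying in the image $f(\mathcal{X})$; for a $\vec{y}$ outside the image the free inversion density $\delta(\vec{y}-f(\vec{x}))$ has no zero inside the integration region, so it integrates to zero and the equation degenerates to the no-solution case signalled by a vanishing FTNILO number $\mathcal{N}$. Surjectivity guarantees $f(\mathcal{X}) = \mathcal{Y}$, so every $\vec{y}\in\mathcal{Y}$ admits a preimage and this vanishing case never occurs. Hence the domain of the inverse is the full codomain $\mathcal{Y}$ rather than merely the image, and the formula is valid at every point of $\mathcal{Y}$.

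I would then assemble the components: setting $\vec{x} = (\Omega_0(\vec{y}),\dots,\Omega_{n-1}(\vec{y}))$ and applying the identity $\Omega_j(\vec{y}) = (f^{-1}(\vec{y}))_j$ for each $j$ yields $\vec{x} = f^{-1}(\vec{y})$ for all $\vec{y}\in\mathcal{Y}$, which is the claim. Because the lemma is really just the injective theorem with the image identified with the whole codomain, I do not expect any hard step; the only point worth verifying is that the delta manipulations of \eqref{eq: omega y def} remain valid as $\vec{y}$ ranges over all of $\mathcal{Y}$, which is immediate, since surjectivity ensures the argument of the Dirac delta always possesses exactly one zero in the integration region.
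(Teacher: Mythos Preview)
Your proposal is correct and matches the paper's approach: the paper states this lemma immediately after the injective-function theorem without a separate proof, treating it as a direct specialization, which is exactly what you do by invoking that theorem and using surjectivity to extend the validity of \eqref{eq: omega y def} from the image to the whole codomain $\mathcal{Y}$.
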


\begin{lemma}[Non-injective function inversion]
$ $\\
     Every non-injective function $f:\mathcal{X}\rightarrow \mathcal{Y}$, with $\mathcal{X}\subseteq  \mathbb{R}^n$ and $\mathcal{Y}\subseteq \mathbb{R}^m$, has a protocol described in \ref{protocol: inversion} that makes use of \eqref{eq: omega y def} to obtain its inverse values.
\end{lemma}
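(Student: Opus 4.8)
The plan is to recognize that a non-injective $f$ is precisely the degenerate situation already handled by Protocol \ref{protocol: inversion}, and to adapt the injective-case construction so that the target value $\vec{y}$ is carried as a free index rather than projected onto a fixed $\vec{Y}$. First I would start from the free inversion density \eqref{eq: inversion function with deltas}, $\Phi(\vec{x},\vec{y})=\delta(\vec{y}-f(\vec{x}))$, which is valid irrespective of injectivity because it faithfully records every $\vec{x}$ whose image is $\vec{y}$. For a non-injective $f$ there is some $\vec{y}$ with several preimages $\{\vec{X}_{(k)},\ k\in[0,\mathcal{N}-1]\}$, and the same delta manipulation used in the degenerate analysis turns the partial trace over all but the $j$-th variable into the sum of deltas \eqref{eq: partial distribution}, namely $F_j(x_j)=\sum_{k}\delta\!\left(x_j-(\vec{X}_{(k)})_j\right)$.

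Next I would invoke Protocol \ref{protocol: inversion} directly: its partial moments $\mu_{0,j}^{\pm}(r)$ count how many of these preimage deltas fall in a chosen half-line or interval, so by progressively shrinking the integration region one isolates a subregion containing exactly one preimage. On such a single-solution region the sum in \eqref{eq: partial distribution} collapses to a single delta, and applying the linear-function node exactly as in \eqref{eq: omega y def} — but with the final integral restricted to that region rather than to all of $\mathcal{X}_j$ — returns the component $(f^{-1}(\vec{y}))_j$ of that particular preimage. The restriction of the region is what renders the otherwise multivalued symbol $f^{-1}(\vec{y})$ well defined, since it selects one branch of the inverse. Running this over all coordinates $j$ reconstructs one full preimage vector, and repeating the protocol on the complementary regions enumerates every preimage, which is the claimed behaviour.

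The main obstacle will be the continuum-degeneracy case, where a whole positive-volume set of $\vec{x}$ maps to the same $\vec{y}$, as in the ReLU example with $Y=0$. Here no shrinking of the region ever yields a single isolated delta, and the bare FTNILO number $\mu_{0,j}$ diverges like $\delta(\vec{0})$ times a volume. I would resolve this exactly as the renormalized search of the excerpt does: comparing the FTNILO numbers $\mathcal{N}'/\mathcal{N}''$ of two nested small regions $\mathcal{X}''\subset\mathcal{X}'$ detects when a region has become entirely solution-filled, at which point the preimage set is reported as that region rather than as isolated points. Thus the protocol, together with \eqref{eq: omega y def} restricted to single-solution regions, covers both the discretely and the continuously degenerate non-injective cases, which establishes the lemma.
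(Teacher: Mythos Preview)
Your proposal is correct and follows the same reasoning the paper relies on: the lemma is stated in the paper without an explicit proof, being an immediate consequence of the degenerate-case machinery (Protocol~\ref{protocol: inversion}, Theorem~\ref{th: degenerate inversion}, and the renormalized search) together with the free-index formulation of \eqref{eq: omega y def}. Your write-up simply makes that implicit argument explicit, including the continuum-degeneracy safeguard, so there is nothing to add.
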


\begin{lemma}[$\sigma$-algebra of an injective function]
    Given an injective function $f:\mathcal{X}\rightarrow \mathcal{Y}$, and $B$ being a $\sigma$-algebra of $\mathcal{Y}$, its $\sigma$-algebra is
    \begin{equation}
        \sigma(f) = \{ f^{-1}(S): S\in B \}=\{ \vec{\Omega}(S): S\in B \}.
    \end{equation}
\end{lemma}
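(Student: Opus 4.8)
The plan is to prove the two claimed equalities separately. The first equality, $\sigma(f) = \{f^{-1}(S) : S \in B\}$, is the standard fact that the pullback of a $\sigma$-algebra along any map is itself a $\sigma$-algebra, and is moreover the smallest one rendering $f$ measurable. First I would verify closure: since preimages satisfy $f^{-1}(\mathcal{Y}) = \mathcal{X}$, $f^{-1}(S^c) = (f^{-1}(S))^c$, and $f^{-1}(\bigcup_k S_k) = \bigcup_k f^{-1}(S_k)$, the collection $\{f^{-1}(S) : S \in B\}$ contains $\mathcal{X}$ and is closed under complementation and countable unions, hence is a $\sigma$-algebra. Then, because any $\sigma$-algebra on $\mathcal{X}$ that makes $f$ measurable must by definition contain every $f^{-1}(S)$ with $S \in B$, this collection is contained in every such $\sigma$-algebra and therefore equals the generated one $\sigma(f)$. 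Note that injectivity is not needed for this half.

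The second equality, $\{f^{-1}(S) : S \in B\} = \{\vec{\Omega}(S) : S \in B\}$, is where injectivity enters and where I would lean on the Injective function inverse theorem, which states that $\vec{\Omega}$ coincides with the genuine inverse $f^{-1}$ on the image $f(\mathcal{X})$. Interpreting $\vec{\Omega}(S)$ as the direct image of the set $S$ under the map $\vec{\Omega}$, I would show that for an injective $f$ the preimage and this direct image describe the same subset of $\mathcal{X}$. Concretely, each $\vec{x} \in f^{-1}(S)$ has a unique image $\vec{y} = f(\vec{x}) \in S \cap f(\mathcal{X})$, and by the theorem $\vec{x} = \vec{\Omega}(\vec{y})$, so $\vec{x} \in \vec{\Omega}(S)$; conversely any $\vec{\Omega}(\vec{y})$ with $\vec{y} \in S \cap f(\mathcal{X})$ lies in $f^{-1}(S)$ by the same identity. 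This yields a bijective correspondence between the two families of sets as $S$ ranges over $B$.

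The main obstacle I anticipate is the treatment of points $\vec{y} \in \mathcal{Y} \setminus f(\mathcal{X})$ outside the image, where the preimage $f^{-1}(\{\vec{y}\})$ is empty but the FTNILO construction returns the sentinel value $\vec{0}$. To close this gap cleanly I would invoke the Checker of solutions theorem: the FTNILO number $\mathcal{N}$ detects exactly whether a given $\vec{y}$ admits a preimage, so one can filter the evaluation of $\vec{\Omega}$ to the points with $\mathcal{N} > 0$, equivalently restrict the inverse to $\mathcal{Y}' = f(\mathcal{X})$ where it is genuinely defined. With this convention the spurious $\vec{0}$ contributions from non-image points are excluded, the direct image $\vec{\Omega}(S)$ reduces to $\vec{\Omega}(S \cap f(\mathcal{X}))$, and it matches the preimage exactly. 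Assembling the two equalities then gives the stated identity for $\sigma(f)$.
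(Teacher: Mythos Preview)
Your proposal is correct and in fact considerably more detailed than what the paper offers: the lemma is stated in the paper without any proof, presented as an immediate corollary of the preceding Injective function inverse theorem (which establishes $\vec{\Omega}=f^{-1}$ on the image). Your argument follows the same intended spirit for the second equality, while additionally supplying the standard measure-theoretic verification of the first equality and carefully handling the off-image points via the Checker of solutions---issues the paper simply leaves implicit.
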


We have seen that every injective function can be inverted with the FTNILO formalism. With the same ideas, we can create a new representation of the initial function, allowing its analysis from a different point of view.

\begin{theorem}[FTNILO transformation]
    Given an injective function $f:\mathcal{X}\rightarrow \mathcal{Y}$, with $\mathcal{X}\subseteq  \mathbb{R}^n$ and $\mathcal{Y}\subseteq \mathbb{R}^m$, we can transform it by the equation
    \begin{equation}
        f(\vec{x})_j = \int_\mathcal{Y} y_j\Phi(\vec{x},\vec{y})d\vec{y}.
    \end{equation}
\end{theorem}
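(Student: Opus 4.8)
The plan is to observe that the claimed identity is an immediate consequence of the delta representation of the free inversion density together with the sifting property of the Dirac delta, so the proof reduces to a short computation that I would justify carefully coordinate by coordinate.

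First I would recall that the FTN $\Phi(\vec{x},\vec{y})$ obtained by tensorizing the logical circuit of $f$ is precisely the free inversion density of \eqref{eq: inversion function with deltas}, namely $\Phi(\vec{x},\vec{y})=\delta(\vec{y}-f(\vec{x}))=\prod_{i=0}^{m-1}\delta(y_i-f(\vec{x})_i)$. Substituting this into the right-hand side of the claimed equation gives
\begin{equation}
    \int_\mathcal{Y} y_j\,\Phi(\vec{x},\vec{y})\,d\vec{y} = \int_\mathcal{Y} y_j\prod_{i=0}^{m-1}\delta\left(y_i-f(\vec{x})_i\right)d\vec{y}.
\end{equation}
Next I would factor the multidimensional integral into one-dimensional ones. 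For each coordinate $i\neq j$ the factor $\int \delta(y_i-f(\vec{x})_i)\,dy_i=1$ by normalization of the delta, while the $j$-th factor is $\int y_j\,\delta(y_j-f(\vec{x})_j)\,dy_j=f(\vec{x})_j$ by the sifting property. Multiplying the factors yields exactly $f(\vec{x})_j$, which establishes the identity.

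The only point that requires care is that the support of each delta lies inside the integration region $\mathcal{Y}$, so that no mass is lost at the boundary. Since $\mathcal{Y}$ is taken to be the codomain (image) of $f$, every value $f(\vec{x})$ lies in $\mathcal{Y}$ by definition, and hence $f(\vec{x})_j$ lies in the corresponding coordinate range; the deltas are therefore fully captured and the sifting property applies cleanly. I expect this support check to be the main, and essentially only, obstacle, and it is mild. I also note that injectivity is used here only to guarantee that the logical-circuit tensorization produces the unambiguous $\Phi$ of \eqref{eq: inversion function with deltas}; the forward identity itself holds for any $f$ admitting that representation, so the theorem can be read as exhibiting $\Phi$ as an integral-kernel reconstruction of $f$ dual to the inversion formula \eqref{eq: omega y def}.
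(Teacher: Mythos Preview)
Your proposal is correct and follows essentially the same route as the paper's own proof: substitute the delta representation $\Phi(\vec{x},\vec{y})=\delta(\vec{y}-f(\vec{x}))$ and apply the sifting property coordinate by coordinate. Your version is simply more explicit about the factorization into one-dimensional integrals, the support check in $\mathcal{Y}$, and the role of injectivity, all of which the paper leaves implicit.
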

\begin{proof}
    If the function is injective
    \begin{equation}
        \Phi(\vec{x},\vec{y}) = \delta(\vec{y}-f(\vec{x})),
    \end{equation}
    then
    \begin{equation}
        \int_\mathcal{Y} y_j\Phi(\vec{x},\vec{y})d\vec{y} = \int_\mathcal{Y} y_j\delta(\vec{y}-f(\vec{x}))d\vec{y}=\int_\mathcal{Y_j} y_j\delta(y_j-f(\vec{x})_j)dy_j = f(\vec{x})_j.
    \end{equation}
\end{proof}

\begin{figure}[h]
    \centering
    \includegraphics[width=0.7\linewidth]{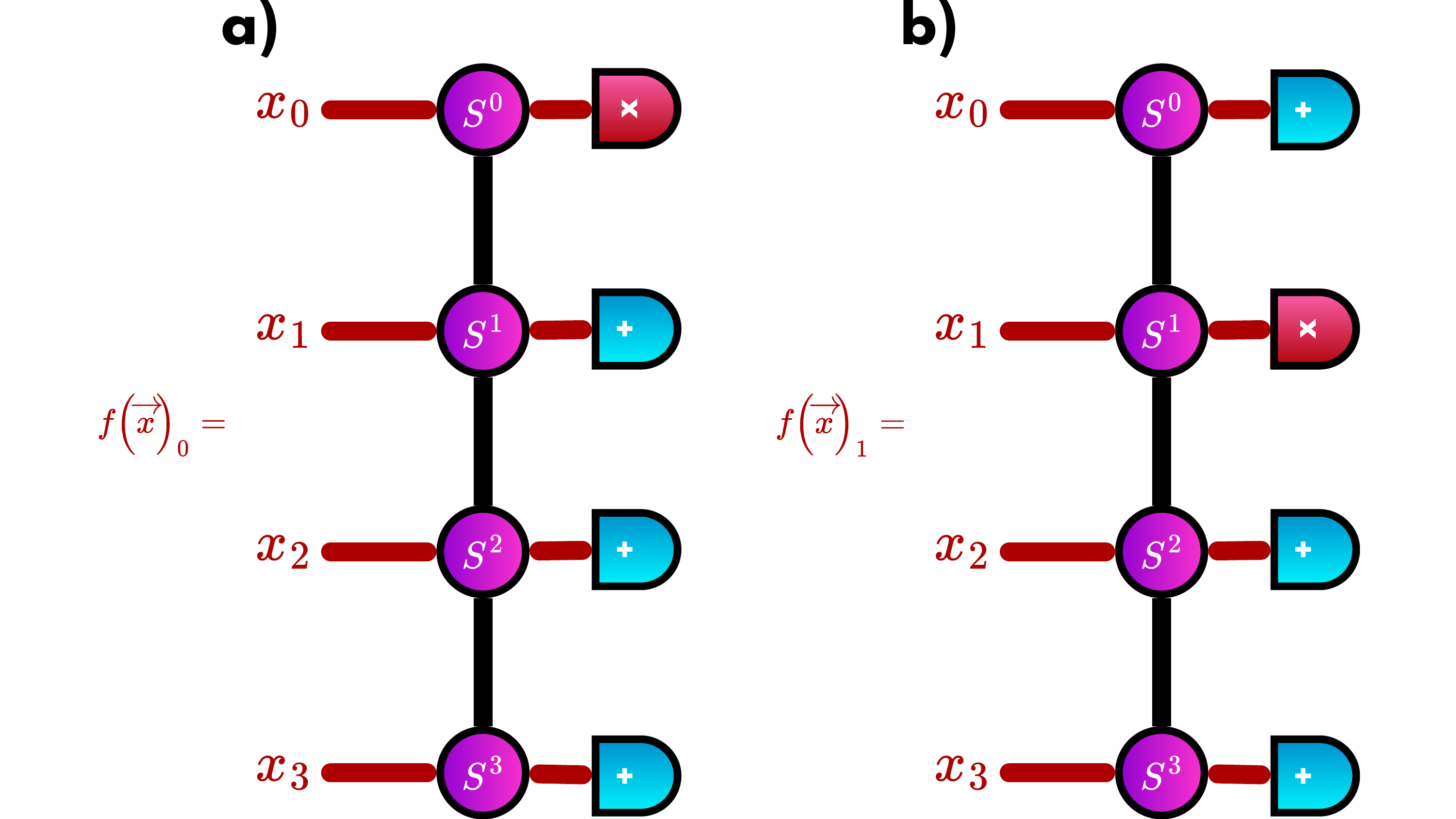}
    \caption{Example of iteration process to determine first and second output values of a vector-valued injective function through the FTNILO transformation.}
    \label{fig: inversion forward}
\end{figure}

\subsection{Cryptographic weakness of every function}

All of these theorems and lemmas have several implications in cryptography.
\begin{lemma}[Cryptographic function break (injective)]
    $ $\\
    Every cryptographic protocol given by an injective encryption function $f:\mathcal{X}\rightarrow \mathcal{Y}$, with $\mathcal{X}\subseteq  \mathbb{R}^n$ and $\mathcal{Y}\subseteq \mathbb{R}^m$, with a secret value $\vec{X}\in\mathcal{X}$ and an encrypted public value $\vec{Y}\in\mathcal{Y}\ \backslash\ \vec{Y}=f(\vec{X})$ has an exact equation that returns the secret value from the public one by \eqref{eq: Master inverse equation}. In other words, every cryptographic protocol based on injective functions can be broken.
\end{lemma}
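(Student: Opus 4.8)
The plan is to obtain this Lemma as an immediate corollary of the inversion machinery already built, by casting the cryptographic setting into the language of function inversion. First I would fix the adversarial model implicit in the statement: under the standard assumption that the encryption map $f$ is publicly known and only its argument is secret, the adversary is handed the pair $(f,\vec{Y})$ with $\vec{Y}=f(\vec{X})$, and the goal is to recover $\vec{X}$. The key observation is that recovering the secret is \emph{exactly} the inversion problem of evaluating $f^{-1}$ at the public point $\vec{Y}$, so nothing new needs to be proved beyond verifying that the earlier hypotheses apply here.

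Next I would check those hypotheses. Because $f$ is injective, the preimage of $\vec{Y}$ is unique: there is one and only one $\vec{X}\in\mathcal{X}$ with $\vec{Y}=f(\vec{X})$, and $\vec{Y}$ lies in the image of $f$ by construction. This is precisely the non-degenerate condition $\exists!\,\vec{X}$ required by Theorem \ref{th: general inversion}, and the situation covered by the Injective function inverse theorem. I would therefore invoke that result to conclude that the explicit equation \eqref{eq: Master inverse equation}, namely $\vec{X}=(\Omega_0,\dots,\Omega_{n-1})$ with $\Omega_j=\int_{\mathcal{X}}\int_{\mathcal{Y}} x_j\,\Phi(\vec{x},\vec{y})\,\delta(\vec{y}-\vec{Y})\,d\vec{y}\,d\vec{x}$, returns the secret value.

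The final step is to argue that this equation constitutes a genuine break, i.e. that every quantity appearing in it is available to the adversary. The tensorized FTN $\Phi(\vec{x},\vec{y})$ is built solely from the public logical circuit of $f$, and the projector $\delta(\vec{y}-\vec{Y})$ depends only on the public ciphertext $\vec{Y}$; crucially, the secret $\vec{X}$ never appears on the right-hand side. Hence the adversary can write down and, in principle, evaluate the equation from public data alone, recovering $\vec{X}$. I expect the main obstacle to be conceptual rather than technical: the statement as phrased asserts only the \emph{existence} of an exact inversion formula, whereas a cryptographic ``break'' in the usual sense also demands computational feasibility. The delta-valued integrals over all of $\mathbb{R}^n$ need not be efficiently computable, so the honest content of the argument is the existence of an explicit decryption equation, and I would be careful to phrase the conclusion at that level rather than claim an efficient attack.
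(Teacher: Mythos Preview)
Your proposal is correct and mirrors the paper's own treatment: the lemma is stated there as an immediate consequence of the injective-function inverse theorem and the master inversion equation \eqref{eq: Master inverse equation}, with no separate proof given. Your additional care in distinguishing the existence of an explicit decryption formula from computational feasibility is apt and in fact echoes a caveat the paper itself raises only later in its conclusions.
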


\begin{lemma}[Cryptographic function break (non-injective)]
    $ $\\
    Every cryptographic protocol given by a non-injective encryption function $f:\mathcal{X}\rightarrow \mathcal{Y}$, with $\mathcal{X}\subseteq  \mathbb{R}^n$ and $\mathcal{Y}\subseteq \mathbb{R}^m$, with a secret value $\vec{X}\in\mathcal{X}$ and an encrypted public value $\vec{Y}\in\mathcal{Y}\ \backslash\ \vec{Y}=f(\vec{X})$ has an exact protocol that returns the secret value from the public one by \ref{protocol: inversion}. In other words, every cryptographic protocol based on non-injective functions can be broken.
\end{lemma}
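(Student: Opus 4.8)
The plan is to reduce this statement directly to the inversion machinery already established for non-injective functions, treating the encryption map $f$ as an ordinary function to be inverted at the known public point $\vec{Y}$, exactly as the preceding injective cryptographic lemma reduces to the non-degenerate inversion equation. First I would observe that any cryptographic protocol must be able to evaluate its encryption function $f$ in order to produce ciphertexts, so $f$ is computable and therefore admits a logical circuit (in the worst case its explicit computational circuit). By the Circuit Field Tensorization procedure this circuit yields the free inversion density $\Phi(\vec{x},\vec{y})=\delta(\vec{y}-f(\vec{x}))$, which is precisely the object required by the earlier inversion results. Here I would rely on the assumption, implicit throughout the paper and standard in cryptanalysis via Kerckhoffs's principle, that the attacker knows $f$ and the public value $\vec{Y}$.

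Second, I would invoke Theorem \ref{th: checker} to confirm that the preimage set is non-empty: the secret $\vec{X}$ itself guarantees $\mathcal{N}>0$. I would then apply the \emph{Non-injective function inversion} lemma together with Theorem \ref{th: degenerate inversion}, which state that Protocol \ref{protocol: inversion}, run on $\Phi$ projected at $\vec{Y}$, partitions the input space into regions and returns the values $\vec{X}_{(k)}$ satisfying $\vec{Y}=f(\vec{X}_{(k)})$. Since by hypothesis the secret satisfies $\vec{Y}=f(\vec{X})$, it is one of these recovered preimages; iterating the protocol over all regions recovers the entire preimage set, which necessarily contains $\vec{X}$.

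The main obstacle I anticipate is not the inversion itself but the degeneracy inherent to a non-injective $f$: the protocol returns the full preimage set rather than a single value, so the phrase \emph{returns the secret value} must be read as \emph{recovers the explicit candidate set that is guaranteed to contain the secret}. I would address two sub-difficulties. The first is the ``everything is solution'' situation, the continuum case illustrated by $\mathrm{ReLU}$ with $Y=0$, where no region ever isolates a single solution; here I would appeal to the renormalized-search argument to detect and characterize the offending region rather than attempt to enumerate points, so that the output is still an explicit description of where the secret lies. The second is the cryptographic reading proper: when the preimage set has more than one element, the ciphertext does not uniquely determine the plaintext under $f$ alone, so either the protocol is already insecure by construction, or additional public side-information must select the true $\vec{X}$ among the finitely many candidates. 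Either way the secret is confined to the explicitly computed candidate set.

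I would close the argument by making this final identification explicit, so that the word \emph{broken} is justified in the precise sense that the attacker, knowing only $f$ and $\vec{Y}$, produces an exact description of a candidate set containing $\vec{X}$, with no appeal to the secret in the construction. The expected hard part is therefore purely the degenerate-case bookkeeping of Protocol \ref{protocol: inversion}, not any new cryptographic content: once the preimage set is recovered, the reduction from ``inversion'' to ``protocol break'' is immediate.
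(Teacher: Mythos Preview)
Your proposal is correct and follows essentially the same route as the paper, which in fact states this lemma without any separate proof environment and treats it as an immediate consequence of the earlier non-injective inversion machinery (the \emph{Non-injective function inversion} lemma and Theorem~\ref{th: degenerate inversion}, both pointing to Protocol~\ref{protocol: inversion}). Your write-up is actually more detailed than anything the paper offers here, in that you make explicit the computability-of-$f$ step, the use of Theorem~\ref{th: checker} to guarantee $\mathcal{N}>0$, and the careful reading of ``returns the secret value'' as ``returns the preimage set containing the secret,'' together with the renormalized-search fallback; the paper leaves all of this implicit.
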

\begin{corollary}[Cryptographic function break]
    $ $\\
    Every cryptographic protocol given by an encryption function $f:\mathcal{X}\rightarrow \mathcal{Y}$, with $\mathcal{X}\subseteq  \mathbb{R}^n$ and $\mathcal{Y}\subseteq \mathbb{R}^m$, with a secret value $\vec{X}\in\mathcal{X}$ and an encrypted public value $\vec{Y}\in\mathcal{Y}\ \backslash\ \vec{Y}=f(\vec{X})$ has an exact algorithm that returns the secret value from the public one. In other words, every cryptographic protocol based on functions can be broken.
\end{corollary}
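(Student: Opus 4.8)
The plan is to proceed by a dichotomy on the single structural property of $f$ that selects which of the two preceding lemmas applies. Every encryption function $f:\mathcal{X}\rightarrow\mathcal{Y}$ is either injective or non-injective, and these two cases are mutually exclusive and jointly exhaustive; hence it suffices to exhibit an exact recovery algorithm in each case separately and then observe that their union covers all possible $f$. Because the two lemmas immediately preceding this corollary already establish the break in each branch, the corollary is essentially their disjunction, and the only work is to make the dichotomy and the notion of ``exact algorithm'' explicit.

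First I would dispatch the injective case directly to the Lemma on the injective cryptographic break: since the secret $\vec{X}$ is the \emph{unique} preimage of the public value $\vec{Y}=f(\vec{X})$, the restricted FTNILO inverse function \eqref{eq: Master inverse equation} returns it exactly, and that equation is assembled solely from $f$ and $\vec{Y}$ (the solution $\vec{X}$ never appears in it), so it is a genuine break using only public data. Second, I would dispatch the non-injective case to the Lemma on the non-injective break: here the preimage of $\vec{Y}$ may be a set of several or even infinitely many points, but Protocol~\ref{protocol: inversion}, using the FTNILO number $\mathcal{N}$ as a solution-counter and the partial moments of $F_j(x_j)$ to localize and extract a single coordinate value, isolates a region containing exactly one solution and returns a valid $\vec{X}_{(k)}$. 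In both branches the procedure takes $f$ and $\vec{Y}$ as input and outputs a secret value satisfying $\vec{Y}=f(\vec{X})$, which is precisely what it means to break the protocol. Combining the two branches, every encryption function admits an exact recovery algorithm, establishing the corollary.

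The main obstacle I anticipate is not the logical case split, which is trivial, but ensuring that the non-injective branch genuinely covers every pathology rather than only the benign finite-preimage situations. The delicate case is the continuum of solutions, such as inverting $\mathrm{ReLU}$ at $Y=0$, where the FTNILO number diverges and Protocol~\ref{protocol: inversion} can never reach a region with a single solution by naive bisection. To handle this I would invoke the renormalized-search argument already set up for Protocol~\ref{protocol: inversion}: comparing the ratio $\mathcal{N}'/\mathcal{N}''$ on nested shrinking regions $\mathcal{X}''\subset\mathcal{X}'$ detects when a subregion becomes an ``everything-is-solution'' region (the ratio collapses to the volume ratio $V(\mathcal{X}')/V(\mathcal{X}'')$), at which point any point of that region is a valid secret. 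This closes the last gap, so that no encryption function escapes the dichotomy, and the corollary follows.
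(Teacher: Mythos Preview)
Your proposal is correct and matches the paper's approach: the corollary is stated without an explicit proof precisely because it is the immediate disjunction of the two preceding lemmas via the injective/non-injective dichotomy, which is exactly what you have written out. Your additional care about the continuum-of-solutions pathology simply rehearses the renormalized-search discussion already attached to Protocol~\ref{protocol: inversion}, so nothing beyond the two lemmas is needed.
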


\newpage
\section{Riemann hypothesis}

To evaluate the potential of this formalism, we choose to approach the Riemann hypothesis. The Riemann hypothesis states that all the non-trivial zeros of the Riemann Zeta Function are inside the critical line $Re(s)=\frac{1}{2}$. In other words, we need to count the number of zeros for this function outside the critical line. One way is to make use of the previously presented FTNILO number.

The first step of its resolution is the construction of the logical circuit that computes the Riemann Zeta function. It entirely depends on the region of $s$ we want to explore. The regions we will explore, the interesting ones, follow the same structure of a sum of independent terms, related only by $s$. So, we can create a circuit that receives each part of $s$ and transmits it to every operator that implements one step of the summation. Each operator transmits the summation up to its term to the following one, and the last one outputs the value of the function. There are several schemes that we can follow, each with its own advantages, and we will explore them with the easiest region.

After the construction of the circuit, we tensorize it into a field tensor network, and post-select the output of the function to $0$. To evaluate a particular region, we need to restrict the input values of $s$. Finally, we take the infinite limit. The resulting equation will give us the number of zeros in the region of interest.

\subsection{Riemann Function for $Re(s)>1$}
The Riemann Zeta Function for $Re(s)>1$ is 
\begin{equation}
    \zeta (s)=\sum_{n=1}^{\infty} \frac{1}{n^s},\ s\in \mathbb{C}.
\end{equation}

This expression is an infinite sum of independent terms that depend all on the same complex value $s$. Our formalism only works with real numbers, so in the circuit we will take in two separated signals the real and imaginary parts of all the values.

The first thing we need is a set of operators $S$ that receive the real and imaginary parts of $s$ and compute $\frac{1}{n^s}$. This operator also receives the previous value of the partial summation $z_{n-1}$, also in real and imaginary parts, and outputs the real and imaginary parts of $z_{n-1}+\frac{1}{n^s}$. This operator tensorized can be done with Dirac delta functions
\begin{equation}
    S_n(z_n,z_{n-1},x'_n,y'_n) = \delta\left(z_n - \left(z_{n-1}+Re\left(\frac{1}{n^{x_n'+iy_n'}}\right)\right)\right)
    \delta\left(z'_n - \left(z'_{n-1}+Im\left(\frac{1}{n^{x_n'+iy_n'}}\right)\right)\right),
\end{equation}
being $x'$ and $y'$ the real and imaginary parts of $s$, and $z$ and $z'$ the real and imaginary parts of the partial summation of the Riemann Zeta Function.

The second thing we need is the transmission of the values of $s$. For this, we only need to communicate the signal of each part through a chain of operators that transmit the same output as their input. The tensorization of those operators is
\begin{equation}
    \begin{gathered}
        \delta(x_{n}-x_{n-1})\delta(x'_{n}-x_{n-1}),\\
        \delta(y_{n}-y_{n-1})\delta(y'_{n}-y_{n-1}).
    \end{gathered}
\end{equation}

\begin{figure*}
    \centering
    \includegraphics[width=\linewidth]{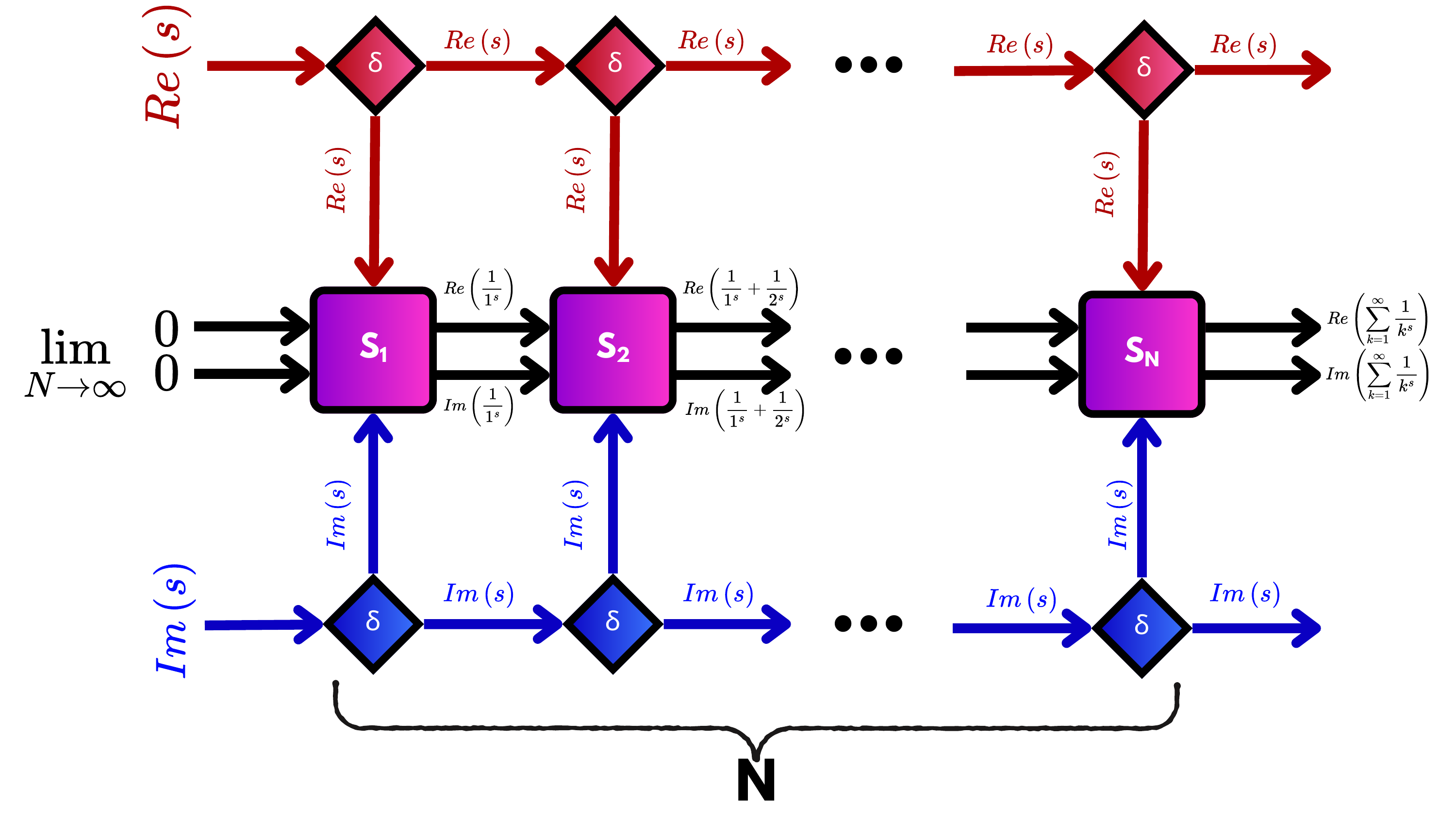}
    \caption{Logical Circuit for the Riemann Zeta Function.}
    \label{fig: Riemann circuit}
\end{figure*}

\begin{figure*}
    \centering
    \includegraphics[width=\linewidth]{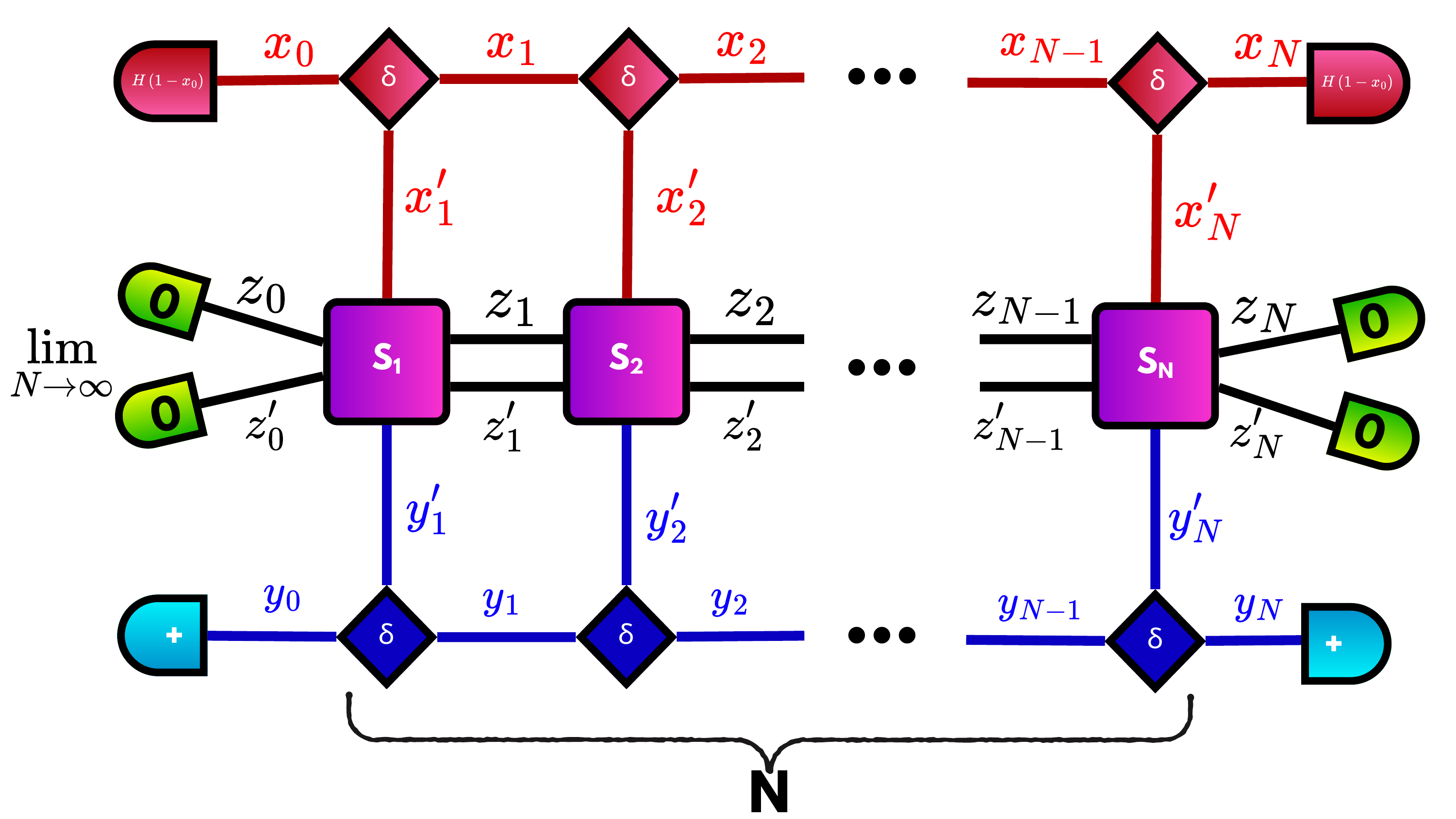}
    \caption{FTNILO of the Riemann Zeta Function zeros. This FTN returns the amount of zeros of the Riemann Zeta Function in the $Re(s)>1$ region.}
    \label{fig: Riemann TN}
\end{figure*}

This is the \textit{Riemann lineal circuit}, shown in Fig.~\ref{fig: Riemann circuit}. To complete the construction, after the tensorization of the circuit, we need to impose the value $0$, so we connect to the first $S$ function and to the last one the following functions
\begin{equation}
        \delta(z_0),\ \delta(z'_0),\ \delta(z_{n}),\ \delta(z'_{n}).
\end{equation}

Finally, we need to impose the region of $s$. This expression is only valid for $Re(s)>1$, where there are no trivial zeros or the critical line, so we can connect a One Constant function to the imaginary part of $s$, and a $H(1-x_0)$ and $H(1-x_{N})$ to the real part. The obtained FTN is shown in Fig.~\ref{fig: Riemann TN}. This is the \textit{Riemann linear field tensor network}.

The equation of this field tensor network is
\begin{equation}\label{eq: Riemann 1}
    \begin{gathered}
        \mathcal{N} =\lim_{N\rightarrow\infty} \int_{\mathbb{R}^{2N+1}}\int_{\mathbb{R}^{2N+1}}\int_{\mathbb{R}^{2(N+1)}}
        H(1-x_{0})H(1-x_{N})
        \delta(z_0)\delta(z'_0)\times\\
        \times\prod_{n=1}^{N}\left(\delta(x_{n}-x_{n-1})\delta(x'_{n}-x_{n-1})\delta(y_{n}-y_{n-1})\delta(y'_{n}-y_{n-1})S_n(z_n,z_{n-1},x'_n,y'_n)\right)\times\\
        \times\delta(z_{N}) \delta(z'_{N})
        d\vec{x}d\vec{x'}d\vec{y}d\vec{y'}d\vec{z}d\vec{z'},
    \end{gathered}
\end{equation}
that developed is
\begin{equation}\label{eq: Riemann 1 developed}
    \begin{gathered}
        \mathcal{N} =\lim_{N\rightarrow\infty} \int_{\mathbb{R}^{2N+1}}\int_{\mathbb{R}^{2N+1}}\int_{\mathbb{R}^{2(N+1)}}
        H(1-x_{0})H(1-x_{N})
        \delta(z_0)\delta(z'_0)\times\\
        \times\prod_{n=1}^{N}\left(
        \delta(x_{n}-x_{n-1})\delta(x'_{n}-x_{n-1})\delta(y_{n}-y_{n-1})\delta(y'_{n}-y_{n-1})\times\right.\\
        \left.\times\delta\left(z_n - \left(z_{n-1}+Re\left(\frac{1}{n^{x_n'+iy_n'}}
        \right)\right)\right)
    \delta\left(z'_n - \left(z'_{n-1}+Im\left(\frac{1}{n^{x_n'+iy_n'}}\right)\right)\right)\right)\times\\
        \times\delta(z_{N}) \delta(z'_{N})
        d\vec{x}d\vec{x'}d\vec{y}d\vec{y'}d\vec{z}d\vec{z'}.
    \end{gathered}
\end{equation}
To improve the symmetry of this FTN, we can take into account that the input of the circuit must be the same as the output, so we can connect the input and the output, as shown in Fig.~\ref{fig: Riemann TN donut}. This is the \textit{Riemann Donut Field Tensor Network}. The only change we need to implement is that the $H(1-x_0)$ must be in all the chain of tensors. This can be implemented in the $S$ tensors, which makes them not allow $x<0$, changing every delta tensor function in the real part by $\delta(x_{n}-x_{n-1})\delta(x'_{n}-x_{n-1})H(1-x_{n-1})$, or changing the integration region. All of them are equivalent, and they can also be applied to the previous equation. In this case, we will take the former one.

\begin{figure*}
    \centering
    \includegraphics[width=\linewidth]{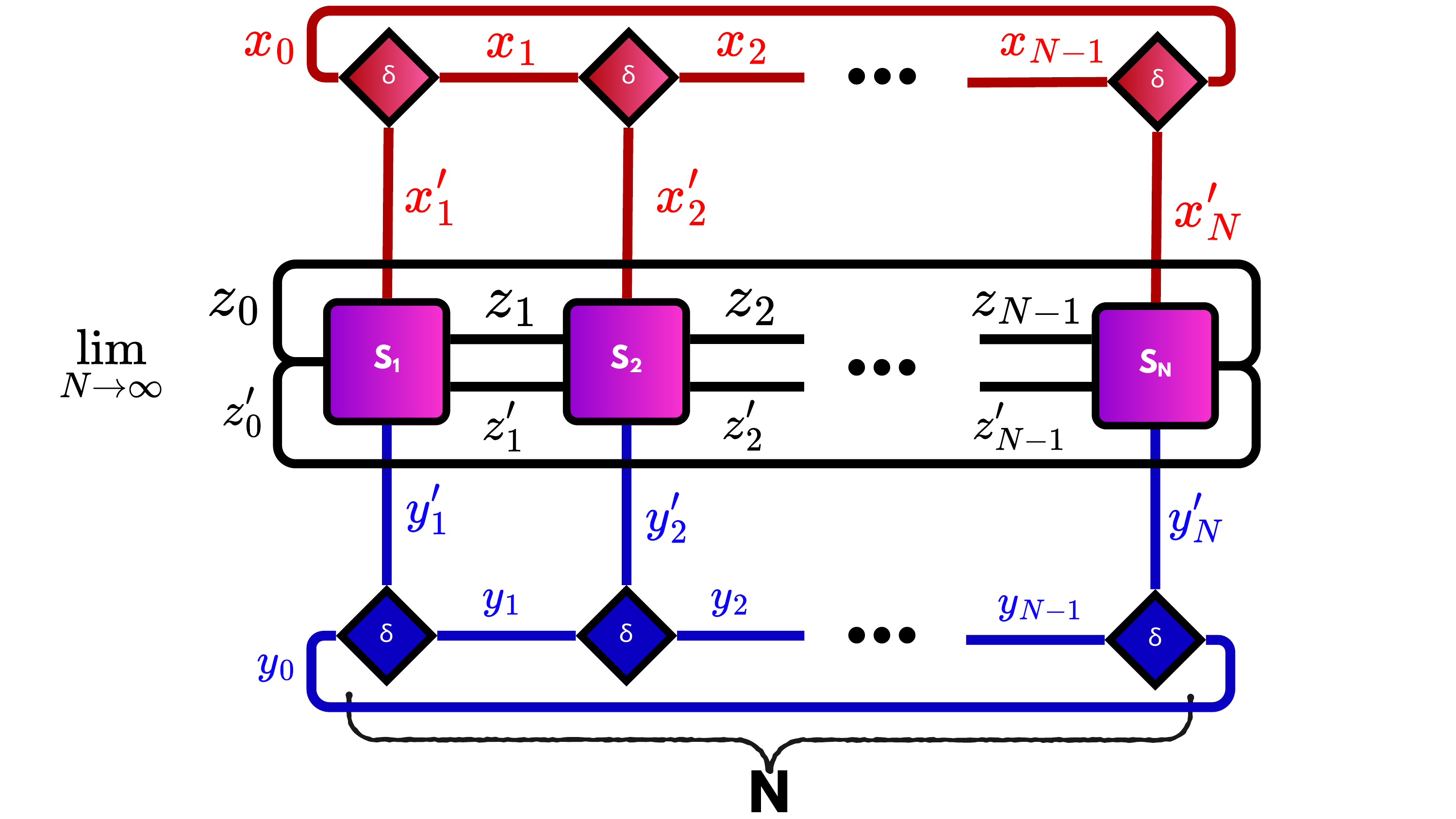}
    \caption{FTNILO of the Riemann Zeta Function zeros in donut shape. This FTN returns the amount of zeros of the Riemann Zeta Function in the $Re(s)>1$ region.}
    \label{fig: Riemann TN donut}
\end{figure*}

We define $a\%N = a \mod(N)$ to simplify the following equation. Due to the cyclic condition that we can impose with a modular operation, the FNTILO equation is
\begin{equation}\label{eq: Riemann 1 donut}
    \begin{gathered}
        \mathcal{N} =\lim_{N\rightarrow\infty} \int_{\mathbb{R^+}^{2N}}\int_{\mathbb{R}^{2N}}\int_{\mathbb{R}^{2N}}
        \prod_{n=1}^{N}\left(\delta(x_{n\%N}-x_{n-1})\delta(x'_{n}-x_{n-1})\delta(y_{n\%N}-y_{n-1})\delta(y'_{n}-y_{n-1})\times\right.\\
        \left.\times S_{n}(z_{n\%N},z_{n-1},x'_{n},y'_{n})\right)
        d\vec{x}d\vec{x'}d\vec{y}d\vec{y'}d\vec{z}d\vec{z'},
    \end{gathered}
\end{equation}
that developed is
\begin{equation}\label{eq: Riemann 1 developed donut}
    \begin{gathered}
        \mathcal{N} =\lim_{N\rightarrow\infty} \int_{\mathbb{R^+}^{2N}}\int_{\mathbb{R}^{2N}}\int_{\mathbb{R}^{2N}}
        \prod_{n=1}^{N}\left(\delta(x_{n\%N}-x_{n-1})\delta(x'_{n}-x_{n-1})\delta(y_{n\%N}-y_{n-1})\delta(y'_{n}-y_{n-1})\times\right.\\
        \left.\times \delta\left(z_{n\%N} - \left(z_{n-1}+Re\left(\frac{1}{n^{x_{n}'+iy_{n}'}}
        \right)\right)\right)
    \delta\left(z'_{n\%N} - \left(z'_{n-1}+Im\left(\frac{1}{n^{x_{n}'+iy_{n}'}}\right)\right)\right)\right) 
    d\vec{x}d\vec{x'}d\vec{y}d\vec{y'}d\vec{z}d\vec{z'}.
    \end{gathered}
\end{equation}
As we know, in the $Re(s)\geq1$ region there are no zeros of the Riemann Zeta function~\cite{Hadamard_1}, so this integral will be zero in the limit. However, it allows us to approach the interesting region of $1>Re(s)>0$, where all the interesting non-trivial zeros should be located.

\subsection{Riemann Function for $1>Re(s)>0$}
For $Re(s)>0$, the series is~\cite{Dirichlet}
\begin{equation}
    \zeta (s) = \frac{1}{s-1}\sum_{n=1}^{\infty}\left(\frac{n}{(n+1)^s}-\frac{n-s}{n^s}\right).
\end{equation}
If we consider only the $1>Re(s)>0$ region, we can neglect the factor $\frac{1}{s-1}$, so we will approach the function
\begin{equation}
    \zeta' (s) = \sum_{n=1}^{\infty}\left(\frac{n}{(n+1)^s}-\frac{n-s}{n^s}\right).
\end{equation}

Following the same structure that in the $Re(s)>1$ region, each $S$ operator computes one term of the series, and all of them collaborate to compute the global. This makes the FTN in Fig.~\ref{fig: Riemann TN} and Fig.~\ref{fig: Riemann TN donut} the ones we will use, but we change the $S$ functions. The current function is
\begin{equation}
    \begin{gathered}
    S_n(z_n,z_{n-1},x'_n,y'_n) = \delta\left(z_n - \left(z_{n-1}+Re\left(\frac{n}{(n+1)^{x_n'+iy_n'}}-\frac{n-x_n'-iy_n'}{n^{x_n'+iy_n'}}\right)\right)\right)\times\\
    \times\delta\left(z'_n - \left(z'_{n-1}+Im\left(\frac{n}{(n+1)^{x_n'+iy_n'}}-\frac{n-x_n'-iy_n'}{n^{x_n'+iy_n'}}\right)\right)\right).
    \end{gathered}
\end{equation}
Now we have to take into account the values of $s$ with $1>Re(s)>0$ and discard all $Re(s)=\frac{1}{2}$. We can do this by changing the $S$ operators or changing the integration region. We will consider only the integration region $(0,1)$ for the values $\vec{x}$ and $\vec{x}'$, and discard the value $\frac{1}{2}$ with a factor $1-\chi_{\frac{1}{2}}(x_n)$. This makes the global equation for the linear FTN
\begin{equation}\label{eq: Riemann 0}
    \begin{gathered}
        \mathcal{N} =\lim_{N\rightarrow\infty} \int_{(0,1)^{2N+1}}\int_{\mathbb{R}^{2N+1}}\int_{\mathbb{R}^{2(N+1)}}
        (1-\chi_{\frac{1}{2}}(x_0))
        \delta(z_0)\delta(z'_0)\times\\
        \times\prod_{n=1}^{N}\left(\delta(x_{n}-x_{n-1})\delta(x'_{n}-x_{n-1})\delta(y_{n}-y_{n-1})\delta(y'_{n}-y_{n-1})S_n(z_n,z_{n-1},x'_n,y'_n)\right)\times\\
        \times\delta(z_{N}) \delta(z'_{N})
        d\vec{x}d\vec{x'}d\vec{y}d\vec{y'}d\vec{z}d\vec{z'},
    \end{gathered}
\end{equation}
that developed is
\begin{equation}\label{eq: Riemann 0 developed}
    \begin{gathered}
        \mathcal{N} =\lim_{N\rightarrow\infty} \int_{(0,1)^{2N+1}}\int_{\mathbb{R}^{2N+1}}\int_{\mathbb{R}^{2(N+1)}}
        (1-\chi_{\frac{1}{2}}(x_0))
        \delta(z_0)\delta(z'_0)\times\\
        \times\prod_{n=1}^{N}\left(
        \delta(x_{n}-x_{n-1})\delta(x'_{n}-x_{n-1})\delta(y_{n}-y_{n-1})\delta(y'_{n}-y_{n-1})\times\right.\\
        \left.\times\delta\left(z_n - \left(z_{n-1}+Re\left(\frac{n}{(n+1)^{x_n'+iy_n'}}-\frac{n-x_n'-iy_n'}{n^{x_n'+iy_n'}}\right)\right)\right)\right.\times\\
    \left.\times\delta\left(z'_n - \left(z'_{n-1}+Im\left(\frac{n}{(n+1)^{x_n'+iy_n'}}-\frac{n-x_n'-iy_n'}{n^{x_n'+iy_n'}}\right)\right)\right)\right)\times\\
        \times\delta(z_{N}) \delta(z'_{N})
        d\vec{x}d\vec{x'}d\vec{y}d\vec{y'}d\vec{z}d\vec{z'}.
    \end{gathered}
\end{equation}

In the donut case, the equation is
\begin{equation}\label{eq: Riemann 0 donut}
    \begin{gathered}
        \mathcal{N} =\lim_{N\rightarrow\infty} \int_{(0,1)^{2N}}\int_{\mathbb{R}^{2N}}\int_{\mathbb{R}^{2N}}
        \prod_{n=1}^{N}\left(
        (1-\chi_{\frac{1}{2}}(x_{n\%N}))
        \delta(x_{n\%N}-x_{n-1})\delta(x'_{n}-x_{n-1})\delta(y_{n\%N}-y_{n-1})\delta(y'_{n}-y_{n-1})\times\right.\\
        \left.\times S_{n}(z_{n\%N},z_{n-1},x'_{n},y'_{n})\right)
        d\vec{x}d\vec{x'}d\vec{y}d\vec{y'}d\vec{z}d\vec{z'},
    \end{gathered}
\end{equation}
that developed is
\begin{equation}\label{eq: Riemann 0 developed donut}
    \begin{gathered}
        \mathcal{N} =\lim_{N\rightarrow\infty} \int_{(0,1)^{2N}}\int_{\mathbb{R}^{2N}}\int_{\mathbb{R}^{2N}}
        \prod_{n=1}^{N}\left(
        (1-\chi_{\frac{1}{2}}(x_{n\%N}))
        \delta(x_{n\%N}-x_{n-1})\delta(x'_{n}-x_{n-1})\delta(y_{n\%N}-y_{n-1})\delta(y'_{n}-y_{n-1})\times\right.\\
        \left.\times\delta\left(z_{n\%N} - \left(z_{n-1}+Re\left(\frac{n}{(n+1)^{x_n'+iy_n'}}-\frac{n-x_n'-iy_n'}{n^{x_n'+iy_n'}}\right)\right)\right)\right.\times\\
    \left.\times\delta\left(z'_{n\%N} - \left(z'_{n-1}+Im\left(\frac{n}{(n+1)^{x_n'+iy_n'}}-\frac{n-x_n'-iy_n'}{n^{x_n'+iy_n'}}\right)\right)\right)\right)\times\\
        \times d\vec{x}d\vec{x'}d\vec{y}d\vec{y'}d\vec{z}d\vec{z'}.
    \end{gathered}
\end{equation}

Equations \eqref{eq: Riemann 0 developed} and \eqref{eq: Riemann 0 developed donut} are called the \textit{Riemann FTNILO equations} and they indicate the number of non-trivial zeros of the Riemann Zeta Function outside the critical line.

\subsection{The answer of the Riemann hypothesis}
With the given equations, we can get the following. 

\begin{definition}[Delta consistency]
    Given a set of delta functions $\{\delta(f_i(\vec{x})), i\in[0,M-1]\}$, we say they are consistent if
    \begin{equation}
        \prod_{i=0}^{M-1} \delta(f_i(\vec{x})) \neq 0\ \forall \rho(\vec{x}),
    \end{equation}
    being $\rho(\vec{x})$ the set of variables non-shared between the deltas.
\end{definition}

\begin{theorem}[Riemann delta consistency]
    The Riemann hypothesis is false if and only if all its deltas in the Riemann FTNILO equation are consistent. This means that if some subset of the deltas is not consistent, the Riemann hypothesis is true.
\end{theorem}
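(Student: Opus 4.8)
The plan is to reduce the statement to the \emph{Checker of solutions} theorem (Theorem~\ref{th: checker}) and then to reinterpret the positivity of the resulting integral as a statement about the simultaneous solvability of the delta constraints. First I would recall that, by the construction of the previous subsection, the quantity $\mathcal{N}$ in the Riemann FTNILO equation \eqref{eq: Riemann 0 developed} counts exactly the non-trivial zeros of $\zeta$ lying in the strip $0<Re(s)<1$ with $Re(s)\neq\tfrac12$, using that in this strip $\zeta(s)=\tfrac{1}{s-1}\zeta'(s)$ has the same zeros as $\zeta'(s)$ since $s\neq 1$. Since the Riemann hypothesis asserts that every non-trivial zero has $Re(s)=\tfrac12$, the hypothesis is false precisely when $\mathcal{N}>0$ and true precisely when $\mathcal{N}=0$. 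By Theorem~\ref{th: checker}, $\mathcal{N}>0$ holds if and only if the post-selected FTN admits at least one input consistent with all of its constraints.

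The second step is to read off what "consistent with all constraints" means in terms of the delta factors of the integrand. I would analyze the chain structure: the transmission deltas $\delta(x_n-x_{n-1})$, $\delta(x'_n-x_{n-1})$, $\delta(y_n-y_{n-1})$, $\delta(y'_n-y_{n-1})$ collapse every copy of the real and imaginary parts onto a single complex value $s=x'+iy'$ in the admissible region, while the initialization deltas $\delta(z_0)$, $\delta(z'_0)$, the recurrence deltas contained in each $S_n$, and the finalization deltas $\delta(z_N)$, $\delta(z'_N)$ jointly encode, in the limit $N\to\infty$, the single pair of equations $Re(\zeta'(s))=0$ and $Im(\zeta'(s))=0$. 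Thus the entire set of deltas possesses a common zero of its arguments -- that is, it is consistent in the sense of the \emph{Delta consistency} definition -- if and only if there exists an admissible $s$ with $\zeta'(s)=0$, hence a non-trivial zero off the critical line.

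Finally I would chain the equivalences. If the deltas are consistent, integrating the product over the bonded variables collapses the recurrence and leaves a strictly positive contribution localized at each admissible zero, so $\mathcal{N}>0$ and the hypothesis is false. Conversely, if no admissible $s$ satisfies $\zeta'(s)=0$, then for every such $s$ the value of $z_N$ forced by the recurrence differs from $0$, so the finalization deltas $\delta(z_N)$, $\delta(z'_N)$ clash with the value propagated through the chain; some subset of the deltas is therefore inconsistent, the product vanishes identically, and $\mathcal{N}=0$, so the hypothesis is true. Reading the two implications together yields the claimed biconditional, and contraposing the first gives the restatement that inconsistency of some subset of deltas forces the Riemann hypothesis to be true.

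The hard part will be making the notion of \emph{consistency} fully rigorous in the generalized-function setting and controlling the $N\to\infty$ limit. Concretely, I must justify that a consistent product of deltas yields a strictly positive integral rather than an indeterminate $\delta(0)\cdot 0$ form -- which requires either non-degeneracy of the Jacobian of the constraint map at each zero or an appeal to the renormalized search already introduced -- and that the truncated Dirichlet-type partial sums converge to $\zeta'(s)$ uniformly on the compact admissible region, so that the limit may be exchanged with the integration. Establishing this uniform convergence, together with the claim that inconsistency of a single subset of deltas forces the whole product to vanish, is where the genuine analytic work lies.
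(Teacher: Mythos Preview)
Your proposal is correct and follows essentially the same route as the paper: both arguments identify $\mathcal{N}$ with the count of non-trivial zeros off the critical line, observe that the integrand is a product of non-negative delta factors, and conclude that $\mathcal{N}>0$ is equivalent to the simultaneous solvability (consistency) of the delta constraints. Your version is in fact more explicit---you invoke the Checker theorem directly, unwind the chain structure to identify consistency with $\zeta'(s)=0$, and flag the genuine analytic issues (Jacobian non-degeneracy, exchange of the $N\to\infty$ limit with integration) that the paper's short proof leaves implicit.
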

\begin{proof}
    The FTNILO equations return the number of zeros in the region $1>Re(s)>0$ without the line $Re(s)=1/2$. This means that if the Riemann hypothesis is true, this number should be zero; otherwise, it will be another number because this is the only region that could have zeros. The equations consist only of Dirac deltas (the prefactor can be neglected changing the integration region). The function inside the integral is strictly positive, so we should go to the integrated function to determine if its value is zero or not. The only way the integral is zero is for the function to be zero, but the function is a product of functions, so it would be zero only if the product of a subset of its component functions is zero for all values of their non-common variables. This is precisely the Delta consistency definition. If all deltas are consistent in the limit, the integral must be equal to at least one.
\end{proof}

This result implies that the Riemann hypothesis problem, which originally would be proved by `checking all possible values of $s$' can be proven by finding only one Delta inconsistency. This search is outside the scope of this work, but we can give some possible approaches.

\begin{figure*}
    \centering
    \includegraphics[width=0.3\linewidth]{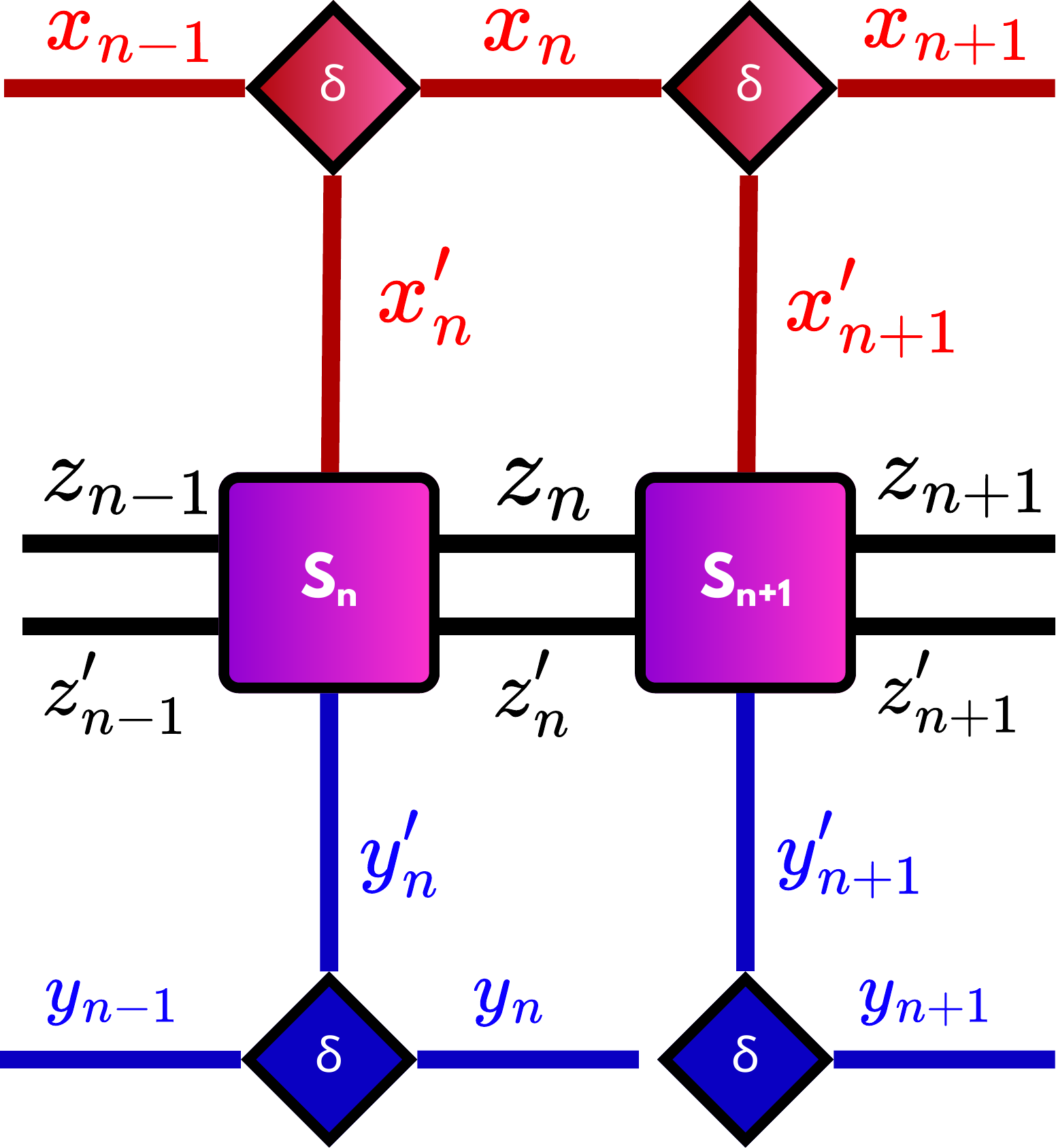}
    \caption{Donut Slice for the Riemann Zeta Function.}
    \label{fig: Riemann Donut slice}
\end{figure*}

The first is the real motivation for the donut version of the FTN. We could `cut' a slice of the donut, as in Fig.~\ref{fig: Riemann Donut slice}, and evaluate its consistency taking the limit. However, simple intuition tells us that the inconsistency must appear considering all the functions inside the integral. If we do not take into account one of the $S$ operators, the remaining term of the summation could be whatever. Similarly, if we do not consider one of the operators that sends the $s$ value, the associated $S$ operator could do the summation with other values of $s$. This means that the inconsistency of the product must be a global property instead of a local one.

However, another possible approach can be decomposing all the Dirac deltas into other functions, for example, Fourier transforms, and evaluating their `consistency'. In the same way, we can consider the limit version of the deltas and operate the functions that represent them before taking the limit.

A final possible approach is to make the integral in some way. To demonstrate that it is false, we only have to locate an infinitesimal point in the integration region with a non-zero value, which is equivalent to finding a zero.


\newpage
\section{Conclusions}
We have presented a new formalism to create equations to extract properties from functions. We have applied it to the minimization and inversion of functions and for obtaining the number and positions of certain values of them. We have also applied it to make some statements about cryptography. Finally, we have applied it to determine an explicit equation that returns the solution of the Riemann hypothesis. We also showed that its demonstration requires only a check of consistency over the Riemann FTNILO equations. Additionally, we have explored its connection with the discrete formalism MeLoCoToN, for combinatorial problems, and their combination for hybrid problems.

Further lines of research could be the generalization of the formalism to approach other problems, such as the resolution of differential or integral equations, the application of specific functions, such as cryptographic ones, and the modeling of abstract mathematical problems. For example, other Millennium Prize Problems could be studied from this framework.

An interesting question is the computation of these explicit equations. As we can see, these equations are hard to compute with classical resources, sometimes impossible, even with the transformations proposed. However, they could describe physical systems with related measurable quantities corresponding to these equations. If these physical systems exist, we could compute these equations efficiently, allowing the solution of certain types of problems. The study of their possible existence could be an interesting line of research.

\section*{Acknowledgment}
This work has been developed in the `When Physics Becomes Science' project of \href{https://www.youtube.com/@whenphysics}{When Physics}, an initiative to recover the original vision of science.

\bibliographystyle{unsrtnat}
\bibliography{references}

\end{document}